\documentclass{article}
\usepackage[utf8]{inputenc}

\usepackage{amsfonts,amsmath,amssymb,amsthm,authblk}
\usepackage[square,sort&compress,comma,numbers]{natbib}
\usepackage{units,xcolor}

\newtheorem{theorem}{Theorem}
\newtheorem{prop}[theorem]{Proposi{t}ion}
\newtheorem{lemma}[theorem]{Lemma}

\theoremstyle{plain}
\newtheorem{corollary}[theorem]{Corollary}

\theoremstyle{remark}
\newtheorem{defn}[theorem]{Def{i}ni{t}ion}

\newcommand{\abs}[1]{\ensuremath{\lvert #1 \rvert}}
\newcommand{\eps}{\ensuremath{\varepsilon}}
\newcommand{\F}{\ensuremath{\mathbb{F}}}
\newcommand{\K}{\ensuremath{K}}
\newcommand{\N}{\ensuremath{\mathbb{N}}}
\renewcommand{\O}{\ensuremath{\mathcal{O}}}
\newcommand{\ord}{\ensuremath{\operatorname{ord}}}
\newcommand{\Q}{\ensuremath{\mathbb{Q}}}
\newcommand{\R}{\ensuremath{\mathbb{R}}}
\newcommand{\Z}{\ensuremath{\mathbb{Z}}}

\makeatletter

\newcommand{\Rmnum}[1]{\expandafter\@slowromancap\romannumeral #1@}
\makeatother

\makeatletter
\newcommand{\subjclass}[2][1991]{%
  \let\@oldtitle\@title%
  \gdef\@title{\@oldtitle\footnotetext{#1 \emph{Mathemat{i}cs Subject Classif{i}cat{i}on.} #2.}}%
}
\newcommand{\keywords}[1]{%
  \let\@@oldtitle\@title%
  \gdef\@title{\@@oldtitle\footnotetext{\emph{Key words and phrases.} #1.}}%
}
\makeatother

\title{\protect{On the coef{f}icient-choosing game}}

\author[1]{Divyum Sharma}
\affil[1]{Department of Mathematics\\
Birla~Inst{i}tute~of~Technology~and~Science, Pilani 333\,031 \textsc{India}
\texttt{divyum.sharma\symbol{64}pilani.bits-pilani.ac.in}}

\author[2]{L.~Singhal}
\affil[2]{Yau Mathemat{i}cal Sciences Center\\
Tsinghua~University,~Beijing~100\,084 \textsc{China}
\texttt{singhal\symbol{64}tsinghua.edu.cn}}

\date{}

\subjclass[2020]{Primary 91A46; Secondary 11C08, 11S05}
\keywords{Roots of polynomials, F{i}nite cyclic rings, Newton polygons}

\begin{document}

\maketitle

\begin{abstract}
  Nora and Wanda are two players who choose coef{f}icients of a degree $d$ polynomial from some f{i}xed unital commutat{i}ve ring $R$. Wanda is declared the winner if the polynomial has a root in the ring of fract{i}ons of $R$ and Nora is declared the winner otherwise. We extend the theory of these games given by \citeauthor*{GWZ18} to all f{i}nite cyclic rings and determine the possible outcomes. A family of examples is also constructed using discrete valuat{i}on rings for a variant of the game proposed by these authors. Our techniques there lead us to an adversarial approach to construct{i}ng rat{i}onal polynomials of any prescribed degree (equal to $3$ or greater than $8$) with no roots in the maximal abelian extension of \Q.
\end{abstract}

\section{Introduction}\label{S:intro}
  Let $R$ be a commutat{i}ve ring with unity. \citeauthor*{GWZ18}~\cite{GWZ18} recently introduced a two-player game in which the players, Nora and Wanda, take turns to pick coef{f}icients of a degree $d$ polynomial from $R$. The leading coef{f}icient $a_d$ (say) and the constant coef{f}icient $a_0$ are not allowed to be zero. Nora is said to win if the result{i}ng polynomial has no roots in the ring of fract{i}ons $\operatorname{Frac} (R)$ and Wanda wins otherwise. The authors exhibited many instances of the game over integral domains along with possible winning strategies.  They proved that if $R$ is a subring of a number field, then the last player can always win. The proof used results about the number of solutions of $S$-unit equations. Over the f{i}eld of real numbers, they gave a winning strategy for Player\,\Rmnum{1} in the case of quadratic polynomials while Wanda can win in all remaining cases. When $R$ is a f{i}nite field, they established that Wanda has a winning strategy for degree $3$ and $\textrm{char}(R)= 3$, and that the last player can always win in the remaining cases. \citeauthor{Dic58}'s classi{f}icat{i}on of permutation polynomials of degree~$3$ was used towards this end~\cite{Dic58,GWZ18}.\\[-0.1cm]
  
 One aim of this paper is to extend the theory of these games beyond integral domains. More precisely, we consider what happens when $R$ is a f{i}nite cyclic ring of order $N$. It is noted that the ring of fract{i}ons of any such ring is isomorphic to the ring itself. Furthermore, we shall conf{i}ne our attent{i}on to $N$ not being a prime since $R$ is a f{i}nite f{i}eld otherwise and the game over such rings has already been sat{i}sfactorily resolved in~\cite{GWZ18}. Here, we prove
  \begin{theorem}\label{Th:IntroCyclic}
    Let $d \in \N^*,\ N > 1$ be a composite number and $R = \Z / N\Z$. Then, the last player can win when
    \begin{enumerate}
      \item $d$ is even, or
      \item $d > 1$ and $N$ is cube-free, or
      \item $d > 3$ with $N = 16 N_2$ for some cube-free odd integer $N_2$.
    \end{enumerate}
    In all other cases, Wanda always has a winning strategy over $R$.
  \end{theorem}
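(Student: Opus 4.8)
The plan is to reduce the game to a root-existence analysis over each prime-power factor of $N$ and then run a combinatorial argument about control of the final move. Since $\operatorname{Frac}(R)=R$, Wanda wins exactly when the terminal polynomial $f(x)=\sum_{i=0}^{d}a_i x^i$ has a root in $R=\Z/N\Z$. Writing $N=\prod_k p_k^{e_k}$ and invoking the Chinese Remainder Theorem, $f$ has a root in $R$ if and only if it has a root modulo every $p_k^{e_k}$. Consequently Wanda must force a root at \emph{every} prime-power factor simultaneously, whereas Nora need only destroy all roots at \emph{one} such factor. I would also record at the outset the elementary but crucial fact that a unit of $R$ reduces to a unit modulo each $p_k^{e_k}$, so that a "unit root" is realised at all primes at once.

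The engine of the argument is a pair of lemmas about the last move, where the final player fills the single remaining slot $a_j$ with all other coefficients already fixed. For Wanda as last mover I would prove: if $j$ is interior ($1\le j\le d-1$) she wins outright, since setting $a_j=-\sum_{i\ne j}a_i$ makes $x=1$ a root; and if $j\in\{0,d\}$ she still wins unless the fixed data is degenerate, because she may instead target any unit $u$ and solve the linear equation $a_j u^{j}=-\sum_{i\ne j}a_i u^{i}$, the only obstruction being the constraint $a_j\neq 0$ on the two boundary slots. For Nora as last mover the question becomes a counting problem: she wins provided she can pick $a_j$ avoiding, at some fixed prime $p_k$, every residue that would create a root modulo $p_k^{e_k}$. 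I would bound the number of such "lethal" residues using the Hensel/Newton-polygon description of the roots of $f$ over $\Z_{p}$ reduced modulo $p^{e}$, so that Nora succeeds precisely when this count stays below $p_k^{e_k}$ (or below $p_k^{e_k}-1$ for a boundary slot).

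With these tools I would split into the stated cases. A preliminary combinatorial observation ties the degree thresholds to the lemmas: the last player can often use earlier turns to occupy the two boundary slots and thereby leave a \emph{free} interior slot for the final move, and the feasibility of this maneuver is exactly what the conditions $d>1$ and $d>3$ guarantee. For even $d$, and for cube-free $N$ with $d>1$, I expect the Newton-polygon bounds to keep the lethal-residue counts strictly below the ring size at every prime with $e_k\le 2$, so that whichever player moves last can steer the outcome; the even-degree case additionally exploits even-degree-specific structure (for instance the pairing $x\mapsto -x$ of residues) to secure the last player enough freedom. The factor $16=2^{4}$ demands a separate explicit $2$-adic computation showing that for $d>3$ the last player still retains control modulo $2^{4}$. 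Finally, in the complementary regime—odd $d$ together with some prime power $p^{e}$, $e\ge 3$, outside the $16$ exception—I would exhibit a \emph{turn-order-independent} winning strategy for Wanda: a small family of coefficients she can seize to force a root modulo $p^{e}$, where the odd-degree shape of the Newton polygon is precisely what makes such a robust forcing strategy available once $e\ge 3$ (the residual small cases $d=1$ and $d=3$ with non-cube-free $N$ falling here as well).

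The main obstacle will be the $p$-adic root-existence analysis at higher prime powers: quantifying, through Newton polygons, exactly how many residues of a single coefficient can be blocked or forced, and thereby explaining why the threshold sits at the cube $p^{3}$ with the solitary exception at $2^{4}$. A second, subtler difficulty is the Chinese Remainder coupling, since each move fixes a coefficient at all primes at once. Nora's strategy localises harmlessly to a single prime, but Wanda must reconcile simultaneous root-creation across all factors with the nonzero constraints on $a_0$ and $a_d$; so the unit-root trick and her control of the two boundary slots must be coordinated into one consistent choice in $\Z/N\Z$, and verifying that this coordination is always possible in the claimed cases is where I anticipate the bulk of the technical work.
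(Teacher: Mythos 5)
Your reduction via the Chinese Remainder Theorem, the last-mover lemma for Wanda (making $x=1$ a root by filling an interior slot), and Nora's counting argument (avoiding at most $\varphi(p^{e})+1<p^{e}$ lethal residues after she has forced the constant term to be a unit or killed $a_1$) all match the paper's treatment of the easy directions. The genuine gap is in the hard direction: the claim that Wanda wins for odd $d$ whenever some $p^{3}\mid N$ (outside the $16N_2$, $d>3$ exception) \emph{even though she is not the last player}. You describe this only as ``a small family of coefficients she can seize to force a root modulo $p^{e}$,'' attributed to ``the odd-degree shape of the Newton polygon,'' but Newton polygons are not the operative mechanism here and this sketch does not explain why the threshold sits at $e=3$, why $2^{4}=16$ is anomalous for $d>3$ but not for $d=3$, or even why the degree's parity matters. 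The actual reasons are: (i) odd $d$ means an even number of coefficients, so when Nora is last Wanda is \emph{first} and gets to seize $a_0$ before Nora can set $a_0=1$; (ii) Wanda opens with the specific non-unit $a_0=-p^{e-1}$ (or $-p^{e-2}$ for even $e$), so that any candidate root of the form $up^{j}$ annihilates all terms $a_n x^n$ with $n\ge 2$ or $n\ge 3$ modulo $p^{e}$, collapsing the game to a fight over $a_1$ and $a_2$ alone; and (iii) a case analysis on the exact $p$-divisibility of Nora's reply $a_1=u_1p^{i}$ (the cases $i<k$, $i=k$, $i>k$ each demanding a different counter-move $a_2\in\{0,1,p,1-2u_1\}$) shows Wanda can always steer some multiple of $p$ into being a root precisely when $e\ge 3$, with a separate unit-counting argument needed at $p=2$, $e=4$. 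None of this machinery appears in your proposal, and without it the boundary of the theorem --- which is the entire content of the ``in all other cases Wanda wins'' clause --- is unproved.

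A secondary inaccuracy: you present the CRT coordination for Wanda as an anticipated difficulty, but it is resolved trivially once the single-prime-power strategy exists --- Wanda simply chooses every one of her coefficients to be $\equiv 0 \pmod{N_2}$ and $\equiv$ her $p^{k}$-strategy $\pmod{p^{k}}$, so that the root $x_0\equiv x_1 \pmod{p^{k}}$, $x_0\equiv 0\pmod{N_2}$ works; the ``unit root realised at all primes at once'' observation you flag as crucial is not what is used, since Wanda's forced roots in the hard cases are multiples of $p$, not units. Likewise the even-degree case does not use the pairing $x\mapsto -x$; it uses only the parity of the number of moves (with $d+1$ odd, the last player is also the first and can claim $a_0=1$ herself).
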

   We will soon explain in Lemmata~\ref{L:lin} and \ref{L:GWZ} as to how can Wanda win when $d = 1$ and/or when she is the last player. For $d > 1$, Nora's strategy for cube-free composite numbers is given in Lemma~\ref{L:cfree} which gets strengthened later in Corollary~\ref{C:last} for $d > 3$. Wanda's route to an ensured victory for the remaining parameter values of $d$ and $N$, in spite of not being the last player, follows from Corollaries~\ref{C:CRT} and \ref{C:Wanda}.\\[-0.1cm]

  There is a related variant of the game which has also been proposed in \cite{GWZ18}. Let $D_1$ and $D_2$ be two integral domains both contained in some larger domain $D$. Now, our players Nora and Wanda are required to choose coef{f}icients from $D_1$ and the winner is decided according as whether the polynomial has a root in $D_2$ or not. We provide one such family of examples:
  \begin{theorem}\label{Th:Secondvariant}
    Let $d > 1,\ p$ be any prime integer and the players be choosing coef{f}icients from $D_1 = \Q$. Then, the last player can win if roots are to be avoided/demanded in any subring $D_2 \subseteq \Q_p$, the f{i}eld of $p$-adic numbers.
  \end{theorem}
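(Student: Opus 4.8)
The plan is to treat the two possible goals of the last player separately, reducing each to a statement that is uniform in the target subring $D_2$. Since every (unital) subring $D_2 \subseteq \Q_p$ contains $\Z$, a demanded root can always be sought inside $\Z$, whereas an avoided root need only be avoided in all of $\Q_p$. Thus I would prove that the last player can (i) force an integer root when Wanda is last, and (ii) force the absence of any root in $\Q_p$ when Nora is last. Both statements are independent of $D_2$, and together they yield the theorem irrespective of the parity of $d+1$ that decides who moves last.

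The demanding side is straightforward. On her final move Wanda faces a polynomial $f$ with a single free coefficient $a_j$, the remaining $a_i$ being fixed. Fixing a target $r \in \Z \setminus \{0\}$, the equation $f(r)=0$ is linear in $a_j$ and determines a unique rational value $a_j = -\,r^{-j}\sum_{i \ne j} a_i r^i$ (with the obvious modification when $j=0$). As $r$ ranges over $\Z$ this prescription gives a root-producing rational for all but finitely many $r$ (those making the relevant auxiliary polynomial vanish, or violating $a_0,a_d \ne 0$); choosing any admissible $r$ yields a genuine root $r \in \Z \subseteq D_2$. This argument uses nothing $p$-adic beyond the inclusion $\Z \subseteq D_2$.

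The avoiding side is the heart of the matter, and here I would argue through Newton polygons. The governing principle is that $f \in \Q_p[x]$ has no root in $\Q_p$ as soon as every slope of its Newton polygon (with respect to $v_p$) is non-integral, while, conversely, an edge of horizontal length one always contributes a genuine $\Q_p$-root, its slope being automatically integral. The decisive feature of moving last is that Nora may give her final coefficient a valuation more negative than any previously played one: if that coefficient sits in position $j$, the polygon collapses to exactly two edges joining $(0,v_p(a_0))$ and $(d,v_p(a_d))$ to the single lowest vertex $(j, v_p(a_j))$, of horizontal lengths $j$ and $d-j$. When $a_j$ is an endpoint ($j=0$ or $j=d$) this is a single edge of slope $\bigl(v_p(a_d)-v_p(a_0)\bigr)/d$, which Nora renders non-integral by tuning $v_p(a_j)\pmod d$ (possible since $d \ge 2$); she wins outright. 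For a genuinely interior $j$ with $2 \le j \le d-2$ she instead tunes $v_p(a_j)$ so that both edge-slopes are non-integral; this succeeds unless the residue conditions modulo $j$ and modulo $d-j$ conflict, in which case exactly one edge has integral slope but horizontal length $\ge 2$, and Nora must additionally force the corresponding factor to be irreducible over $\Q_p$ by controlling the reduction --- the residual polynomial --- of the coefficients lying along that edge.

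The main obstacle is precisely this interior analysis together with its interactive nature. Two positions are dangerous, namely $j=1$ and $j=d-1$, where the two-edge construction produces a length-one edge and hence a forced root; Nora must therefore guarantee, through her earlier moves, that neither $a_1$ nor $a_{d-1}$ is the coefficient left for last. Since an adaptive Wanda will avoid playing these herself in the hope of trapping Nora, I expect to need a pairing or strategy-stealing argument showing that Nora can always neutralise $a_1$ and $a_{d-1}$ in time while reserving a safe final position. The delicate low-degree case $d=3$, where the budget of moves is tightest and the residual polynomial is a cubic over $\F_p$ whose root-freeness must be engineered against Wanda's control of the corner coefficients, is where this bookkeeping is most demanding, and I anticipate it to be the crux of the entire proof.
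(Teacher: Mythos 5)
Your overall architecture matches the paper's: Wanda's side reduces to forcing an integer root (the paper's Lemma~\ref{L:GWZ}, plus the remark that any unital subring of $\Q_p$ contains $\Z$), and Nora's side is a valuation analysis over $\Q_p$ (the paper phrases Theorem~\ref{Th:dgeq5} and Proposi{t}ion~\ref{P:biquad} as direct order comparisons of the competing monomials, which is the same computation as your Newton-polygon slopes). Your worry about neutralising $a_1$ and $a_{d-1}$ needs no pairing or strategy-stealing argument: for $d\ge 5$ Nora has at least two turns before her last, and she simply spends each of them on whichever of $a_1,\,a_{d-1}$ is still unset. One local correction: in the single interior position where your two congruence conditions can genuinely conflict ($d=4$, $j=2$, with $a_1=a_3=0$ and $\ord a_0\not\equiv\ord a_4\pmod 2$), the right move is not to engineer irreducibility of a length-two edge via its residual polynomial but simply to set $a_2=0$, leaving one edge of slope $(\ord a_4-\ord a_0)/4$ with odd numerator; note that this choice has \emph{maximal}, not minimal, valuation, so your ``play the lowest vertex'' template does not literally cover it.

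The genuine gap is $d=3$, which you correctly flag as the crux but do not resolve, and it is not a bookkeeping refinement of your framework --- it needs a new idea. With four coefficients Nora gets exactly one move before her last, so she cannot keep both $a_1$ and $a_2$ out of her final slot. After Wanda opens with $a_3$, Nora plays $a_2=0$; Wanda then picks $a_0$ with $a_0/a_3$ a perfect cube in $\Q_p^*$ (otherwise $a_1=0$ wins for Nora), and Nora is left choosing $a_1$. Since $3\mid\ord(a_0/a_3)$, every admissible choice leaves the Newton polygon a single segment of \emph{integral} slope, so no slope argument can work. The paper's Proposi{t}ion~\ref{P:cub} instead writes out the putative factorization $(x+c)(x^2+ax+b)$, shows a root would force the leading residue of $a_1/a_3$ (at the exact order $\tfrac{2}{3}\ord(a_0/a_3)$) to lie in the image of $c_0\mapsto (d_0-c_0^3)/c_0$ over the residue field, and uses finiteness of $\F_p$ --- the image has at most $p-1$ elements and contains $0$ --- to pick a leading residue outside it, making $f$ irreducible over $\Q_p$. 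This counting step, which is exactly where the finite-residue-field hypothesis of Theorem~\ref{Th:QuotFieldDVR} enters, is absent from your proposal, so as written the cubic case is unproved.
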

  \noindent As any unital subring of $\Q_p$ will contain all the integers and Wanda's strategy as the last player is guaranteed to produce an integral root (cf.~Lemma~\ref{L:GWZ}), her win is secured. The rest follows as a consequence of the following:
  \begin{theorem}\label{Th:QuotFieldDVR}
    Let $d > 1$ and $R = \operatorname{Frac} (\O)$ be the field of quotients of a complete discrete valuation ring \O\ with its residue f{i}eld having f{i}nite cardinality. Then, whosoever plays the last move of the game has a winning strategy.
  \end{theorem}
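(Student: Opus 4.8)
The plan is to treat the two players separately, since the statement asserts that whoever moves last wins. When Wanda moves last she is trying to force a root, and this is exactly the situation already recorded as Lemma~\ref{L:GWZ}: with all but one coefficient fixed and $R$ infinite, she solves the single equation $a_k r^k = -\sum_{i\neq k}a_i r^i$ for a suitable nonzero $r$, choosing $r$ away from the finitely many values that would violate the nonvanishing constraints on $a_0$ or $a_d$. So the whole content lies on Nora's side: if Nora moves last she can force the polynomial to have no root in $R$.

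For Nora I would work entirely with the normalized valuation $v\colon R^\times\to\Z$, a uniformizer $\pi$, and the finite residue field $\kappa=\O/\pi\O$. The guiding principle is the ultrametric fact that if $x\in R^\times$ and the minimum of the quantities $v(a_i)+i\,v(x)$ is attained by a single index, then $v(p(x))=\min_i\bigl(v(a_i)+i\,v(x)\bigr)<\infty$, whence $p(x)\neq0$; as $p(0)=a_0\neq0$ too, it suffices for Nora to arrange that this minimum is attained uniquely for every $m=v(x)$ — equivalently, that the Newton polygon of $p$ has no edge of integral slope. I would phrase her decisive last move accordingly: with one slot $k$ left and $g(x)=\sum_{i\neq k}a_ix^i$ already fixed, she wins once she can pick $a_k$ with $v(a_k)\notin V$, where $V=\{\,v(g(x))-k\,v(x):x\in R^\times\,\}$, because terms of distinct valuation cannot cancel. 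The key point is that $V\neq\Z$: analyzing the two extreme edges of the Newton polygon of $g$ shows that the only deeply negative integers in $V$ are those congruent to $v(a_0)$ modulo $k$ or to $v(a_d)$ modulo $d-k$ (a single class modulo $d$ for the extreme slots $k\in\{0,d\}$, harmless since $d>1$). Since $v$ is surjective, Nora realizes a deeply negative $c$ avoiding both congruences and sets $v(a_k)=c$.

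This step succeeds unless those two residue classes exhaust $\Z$, which happens precisely for the central slots $k\in\{1,d-1\}$ and, when $d=4$, for $k=2$. I would therefore have Nora use her earlier moves to steer the final empty slot away from the centre: she occupies slots $1$ and $d-1$ on her first two moves, after which the surviving slot lies in $\{0,2,3,\dots,d-2,d\}$ and the valuation argument applies. A short count of how many non-final moves Nora controls shows this steering is available as soon as $d$ is large enough (and already for $d=2$, where she need only vacate the unique interior slot).

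The main obstacle is exactly the low-degree cases $d\in\{3,4\}$, where Nora has too few preliminary moves to clear both central slots and Wanda can strand her on $k=1$, $k=d-1$, or the value $k=2$ when $d=4$. There the crude valuation count fails because $V$ can be cofinite, and I expect the real work to be here: Nora must instead exploit the finiteness of $\kappa$ together with the completeness of \O, choosing $a_k$ so that the residual polynomial attached to the integral-slope edge has no zero in $\kappa$ (so that Hensel's lemma never lifts a root), or, in the genuinely rigid cubic case, falling back on forcing an Eisenstein-type, hence irreducible, polynomial. Pinning down this residue-field and Hensel refinement for the central coefficients is what I anticipate to be the crux of the proof.
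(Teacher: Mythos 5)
Your split into ``Wanda last'' (Lemma~\ref{L:GWZ}) and ``Nora last via valuations'' is exactly the paper's route (Lemma~\ref{L:quad}, Proposition~\ref{P:biquad}, Theorem~\ref{Th:dgeq5}): Nora clears slots $1$ and $d-1$ early, and for a final slot $k$ with $k\in\{0,d\}$ or $2\le k\le d-2$ the two arithmetic progressions of common differences $k$ and $d-k$ leave her infinitely many admissible deeply negative orders, provided they are not both of difference $2$. However, two pieces are missing. The smaller one is $d=4$ with final slot $k=2$, where you declare the valuation count to fail and defer to unspecified machinery; in fact no residue-field input is needed there. If $\ord a_0\not\equiv\ord a_4\pmod 2$ the terms $a_4x^2$ and $a_0x^{-2}$ never share a valuation and Nora simply sets $a_2=0$; if the parities agree, the relevant value set only meets one residue class modulo $2$ below $(\ord a_0+\ord a_4)/2$, and Nora picks $a_2$ of the opposite parity with sufficiently negative order. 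This is precisely how Proposition~\ref{P:biquad} closes that case.

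The genuine gap is $d=3$, which you correctly flag as the crux but do not prove, and it is the only place where the finiteness of the residue field is actually used. Wanda can force Nora to choose $a_1$ last (Wanda opens with $a_3$, Nora kills $a_2$, Wanda picks $a_0$ with $a_0/a_3$ a nonzero cube), and then no choice of $\ord a_1$ alone suffices: Nora is forced to keep the Newton polygon a single segment of slope $-\ord(a_0/a_3)/3$, so all root candidates have the same admissible order and the ``unique minimal term'' criterion is unavailable. The paper's resolution (Proposition~\ref{P:cub}) rules out the factorization $(x+c)(x^2+ax+b)$ at the level of the residue field: comparing coefficients forces $a_1/a_3=(\delta-c^3)/c$ with $\delta=a_0/a_3$, and the map $c_0\mapsto(\delta_0-c_0^3)/c_0$ on $(\O/\mathfrak{p})^*$ takes the value $0$, hence by counting misses some nonzero residue; Nora chooses $a_1$ with order $(2/3)\ord(a_0/a_3)$ and that missed leading residue. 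Your fallback to an ``Eisenstein-type'' polynomial cannot substitute here, since Wanda controls both $a_0$ and $a_3$ and will not hand Nora an Eisenstein configuration. Until an argument of this kind is supplied, the $d=3$ case of the theorem remains unproved.
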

  \noindent A proof of Theorem~\ref{Th:QuotFieldDVR} for $d \neq 3$ is presented in \S\,\ref{S:ult} in the form of Lemma~\ref{L:quad}, Proposi{t}ion~\ref{P:biquad} and Theorem~\ref{Th:dgeq5}. F{i}niteness of the residue f{i}eld is required only for $d = 3$ in Proposi{t}ion~\ref{P:cub} which can be circumvented when $\operatorname{char}\,(R)$ equals $3$.\\[-0.1cm]

  Af{t}er this, Theorem~\ref{Th:Secondvariant} can be deduced for all but cubic polynomials from Corollary~\ref{C:unr} and the following discussion. The theory of Newton polygons helps us to achieve our goal in Corollary~\ref{C:cub}. In the last sect{i}on, we have an applicat{i}on of our results on discrete valuat{i}on rings to the set{t}ting where $D_1$ is the f{i}eld of rat{i}onal numbers and $D_2$ is the union of all f{i}nite abelian extensions of \Q. The lat{t}er is the same as the union of all cyclotomic extensions of \Q\ by the Kronecker-Weber Theorem~\cite[\S\,14.5]{DF04}.
  \begin{theorem}
    Let the degree $d$ be either equal to $3$ or greater than $8$. If both Nora and Wanda are required to choose polynomial coef{f}icients from \Q, then the last player can win if roots are to be avoided/demanded in any subring $D_2$ of the maximal abelian extension of \Q.
  \end{theorem}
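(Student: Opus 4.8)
The plan is to treat the two possible last players separately -- the demander Wanda and the avoider Nora -- and to reduce the avoider's problem to a purely local question at a single auxiliary prime. If Wanda moves last, then by Lemma~\ref{L:GWZ} she can steer the play so that the resulting polynomial acquires a root in $\Z$; since every unital subring $D_2$ of $\Q^{\mathrm{ab}}$ contains $\Z$, this root already lies in $D_2$ and she wins, exactly as in the discussion following Theorem~\ref{Th:Secondvariant}. The whole content therefore lies in the case where Nora moves last and must guarantee that the polynomial has no root in $D_2$; as $D_2 \subseteq \Q^{\mathrm{ab}}$, it is enough for her to produce a degree-$d$ polynomial over \Q\ with no root in $\Q^{\mathrm{ab}}$ at all.

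First I would localise. Fixing an embedding $\overline{\Q}\hookrightarrow\overline{\Q}_p$ for a prime $p$ to be chosen, the local Kronecker--Weber theorem gives $\Q_p^{\mathrm{ab}}=\bigcup_{n\ge1}\Q_p(\zeta_n)$, so that $\Q^{\mathrm{ab}}=\bigcup_n\Q(\zeta_n)$ maps into $\Q_p^{\mathrm{ab}}$. Consequently any root of $f$ lying in $\Q^{\mathrm{ab}}$ would yield a root in $\Q_p^{\mathrm{ab}}$, and it suffices for Nora to force $f$ to have no root in $\Q_p^{\mathrm{ab}}$ for one well-chosen $p$. The advantage of this reformulation is that membership in $\Q_p^{\mathrm{ab}}$ is controlled by ramification: if $\alpha\in\Q_p^{\mathrm{ab}}$ then $\Q_p(\alpha)\subseteq\Q_p(\zeta_{p^k m})$ for some $k$ and some $m$ prime to $p$, whence the prime-to-$p$ part of the ramification index $e(\Q_p(\alpha)/\Q_p)$ divides $p-1$. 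Thus it is enough to arrange that every root of $f$ generates an extension of $\Q_p$ whose ramification index has prime-to-$p$ part not dividing $p-1$.

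Next I would fix the prime and invoke the discrete-valuation-ring machinery. Since $d\ge3$ has $\phi(d)\ge2$, there is a reduced residue class $\not\equiv1\pmod d$, so by Dirichlet one may choose a prime $p>d$ with $p\not\equiv1\pmod d$; then $p\nmid d$ and $d\nmid p-1$. Applying the last-player strategy behind Theorem~\ref{Th:QuotFieldDVR} to $\O=\Z_p$, in the refined form recorded in Corollary~\ref{C:unr} for $d\neq3$ and in Proposition~\ref{P:cub} and Corollary~\ref{C:cub} for $d=3$, Nora forces the $p$-adic Newton polygon of $f$ to be a single segment of slope $-a/d$ with $\gcd(a,d)=1$. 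Such an $f$ is irreducible over $\Q_p$ and totally tamely ramified of degree $d$, so $\Q_p(\alpha)/\Q_p$ has ramification index $d$ and is Galois only if $\zeta_d\in\Q_p$, that is only if $d\mid p-1$; as $d\nmid p-1$ the extension is non-abelian and no root of $f$ lies in $\Q_p^{\mathrm{ab}}$, as required. Finally, because the Newton polygon depends only on $p$-adic valuations, which are open conditions, Nora may realise each prescribed $\Q_p$-move by a rational number, transferring the strategy to coefficients in \Q\ exactly as Theorem~\ref{Th:Secondvariant} is deduced from Theorem~\ref{Th:QuotFieldDVR}.

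The main obstacle, and the source of the degree hypothesis, is the passage from avoiding roots in $\Q_p$ to avoiding roots in $\Q_p^{\mathrm{ab}}$. The discrete-valuation-ring results only guarantee that Nora can suppress a $\Q_p$-rational root, that is a linear factor; but an abelian local factor need not be linear -- it may be unramified, or tamely ramified of degree dividing $p-1$, and either one would place a root in $\Q_p^{\mathrm{ab}}$. Nora must therefore do strictly more: she must prevent Wanda from splitting off any such abelian piece through the Newton polygon and genuinely pin the polygon to a single totally ramified segment. Verifying that the last player retains enough control to enforce this stronger ramification pattern is where the counting of available moves becomes decisive, and it is what restricts the argument to $d>8$ together with the sporadic cubic. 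I expect that for $4\le d\le8$ the avoider cannot in general force the required total ramification, whereas $d=3$ succeeds through the dedicated Newton-polygon analysis already underlying Theorem~\ref{Th:QuotFieldDVR}.
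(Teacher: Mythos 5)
Your treatment of the Wanda\-/last case and the idea of localising at an auxiliary prime are sound, but the central step of your Nora\-/last argument does not go through: the last player cannot, in general, ``pin the Newton polygon to a single totally ramified segment'' of slope $-a/d$ with $\gcd(a,d)=1$, and none of the cited results (Corollary~\ref{C:unr}, Proposi{t}ion~\ref{P:cub}, Corollary~\ref{C:cub}) delivers this. Those results only guarantee the absence of roots of \emph{integer} $p$-adic order. Wanda controls roughly half the coefficients; she may, for instance, fix $\ord_p a_0=\ord_p a_d=0$ early in the game, after which Nora's single remaining coefficient $a_i$ can at best introduce one new vertex $(i,\ord_p a_i)$, breaking the polygon into segments of lengths $i$ and $d-i$ whose slopes may well have small denominators dividing $p-1$ (a length\-/two segment of slope $-1/2$ already yields a root in a ramified quadratic, hence abelian, extension of $\Q_p$). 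Worse, for $d=3$ the strategy of Proposi{t}ion~\ref{P:cub} ends precisely in the opposite configuration: a single segment of \emph{integer} slope with an irreducible residual cubic, so the root generates an unramified --- hence abelian --- cubic extension of $\Q_p$ and \emph{does} lie in $\Q_p^{\mathrm{ab}}$. Your purely local sufficient condition (``no root in $\Q_p^{\mathrm{ab}}$'') is therefore not satisfied by the very strategy you invoke, and you offer no strategy that does satisfy it. You flag this obstacle in your final paragraph but leave it unresolved, which is exactly where the proof has to live.

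The paper closes the gap differently in the two regimes. For $d=3$ it argues globally rather than locally: Corollary~\ref{C:cub} gives irreducibility over $\Q_p$ (hence over \Q), and Nora additionally makes the discriminant negative, forcing the Galois group to be all of $S_3$; a root then generates a non\-/normal cubic field, which cannot sit inside any abelian (hence Galois) extension of \Q. For $d>8$ the paper chooses $p$ by the Chinese remainder theorem and Dirichlet so that $(p-1)/2$ has all prime factors exceeding $d$; then every element of $\overline{\Q}^{\text{ab}}$ of degree at most $d$ over \Q\ is forced into $\overline{\Q}^{\text{ab}\setminus p}(\sqrt{p})\subset\Q_p^{\mathrm{unr}}(\sqrt{p})$, and Nora reruns the avoidance argument of Theorem~\ref{Th:dgeq5} over that field, where the valuation takes values in $\tfrac{1}{2}\Z$. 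The hypothesis $d>8$ enters not through any ramification count but through Lemma~\ref{L:AP}, which guarantees that the two half\-/integer arithmetic progressions of forbidden orders still miss infinitely many integers. So the correct mechanism is to shrink the target field until the integer\-/order (now half\-/integer\-/order) avoidance technique applies again, not to upgrade the conclusion to total ramification.
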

  
\section{\protect{Preliminary observat{i}ons}}
This small sect{i}on records two lemmata which hold in the maximum generality and are used in both of the sect{i}ons that follow. We denote the polynomial obtained at the end of the game by $f$ and its coefficients by $a_i$'s, i.~e.,
\[
f(x)=a_d x^d + \cdots + a_{i }x^{i } + \cdots + a_0.
\]
To begin with, there is the case of linear polynomials.
  \begin{lemma}\label{L:lin}
    If $d = 1$ and $R$ is a unital ring, Wanda can always win.
  \end{lemma}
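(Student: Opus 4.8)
The plan is to hand Wanda an explicit strategy that always produces a root lying inside $R$ itself, rather than as a formal fraction. This is the cleanest way to dispose of the one genuine subtlety here: for a linear polynomial the naive root is $-a_0/a_1$, which need not make sense in $\operatorname{Frac}(R)$ if Nora contrives to make the leading coefficient a zero-divisor. By forcing a root in $R \subseteq \operatorname{Frac}(R)$ we never have to invert anything. Since $d=1$ means there are exactly two coefficients and hence exactly two moves, the argument splits according to whether Wanda is the second (last) player or the first player, and I would treat these two cases separately.

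First I would dispose of the case in which Wanda moves second, so that Nora has already committed one of $a_1, a_0$ to a nonzero value $c$. Whichever coefficient Nora picked, Wanda sets the remaining one equal to $-c$. The resulting polynomial then satisfies $a_1 + a_0 = 0$, so $f(1) = a_1 + a_0 = 0$ and $x = 1 \in R \subseteq \operatorname{Frac}(R)$ is a root. The move is legal because the coefficient Wanda just assigned equals $-c \neq 0$, and both the leading and constant coefficients are nonzero.

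Next I would handle the case in which Wanda moves first: she simply declares the leading coefficient $a_1 = 1$ (legitimate since $1 \neq 0$ in any ring over which the game is nondegenerate). Now no matter what nonzero value $a_0$ Nora assigns to the constant term, the element $-a_0 \in R$ satisfies $a_1 \cdot (-a_0) + a_0 = -a_0 + a_0 = 0$, so $x = -a_0$ is a root in $R \subseteq \operatorname{Frac}(R)$. Because $a_1 = 1$ is a unit, Nora is powerless to block a root regardless of her choice.

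There is no serious obstacle in this lemma; the only point deserving a sentence of care is the passage from an identity $a_1 r + a_0 = 0$ holding in $R$ to the assertion that $r$ is a root in $\operatorname{Frac}(R)$. This is immediate because the canonical localization map $R \to \operatorname{Frac}(R)$ is a ring homomorphism sending such an identity to the corresponding one in $\operatorname{Frac}(R)$, and both candidate roots $1$ and $-a_0$ already lie in $R$. Thus the entire content of the proof is the observation that Wanda can always steer the game so that her produced root sits in $R$, which the two strategies above accomplish in each turn order.
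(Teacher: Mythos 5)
Your proof is correct and takes essentially the same approach as the paper: Wanda declares $a_1 = 1$ when moving first, and mirrors Nora's coefficient when moving second (the paper copies the value so that $-1$ is a root, whereas you negate it so that $1$ is a root --- an immaterial difference). The extra remarks about the root lying in $R \subseteq \operatorname{Frac}(R)$ are sound but not needed beyond what the paper already implicitly uses.
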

  \begin{proof}
    If she plays f{i}rst, it suf{f}ices to choose $a_1 = 1$ and if she plays second, the choice of $a_1$ (or $a_0$) has to be the same as Nora's pick for $a_0$ (or $a_1$). In the lat{t}er situat{i}on, $-1$ is a root of the linear polynomial.
  \end{proof}
  The statement below is essent{i}ally available in~\cite{GWZ18} but we present the proof here for the sake of clarity and completeness.
  \begin{lemma}\label{L:GWZ}
    If $R$ is a commutat{i}ve ring with unity and Wanda makes the last move of the game, then she can win.
  \end{lemma}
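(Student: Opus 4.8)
The whole game is decided by Wanda's final move, so the plan is to show that whatever single coefficient $a_k$ is left for her to set, she can complete $f$ so that $x=1$ becomes a root. Concretely she would set $a_k = -\sum_{j\neq k} a_j$, forcing $f(1)=0$ and hence exhibiting the root $1\in\operatorname{Frac}(R)$. If the leftover index is interior, i.e. $0<k<d$, there is no restriction on $a_k$ and this already wins; the only way the move can fail is if the leftover slot is one of the two constrained ends $a_0,a_d$ and the prescribed value $-\sum_{j\neq k}a_j$ happens to be $0$, which the rules forbid. The entire difficulty is therefore to keep Wanda from being trapped into such a forbidden zero on a constrained slot. (For $d=1$ nothing is needed, since Lemma~\ref{L:lin} already hands Wanda the win.)

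For $d\geq 4$ I would remove the difficulty by steering the game so that Wanda's final slot is interior. The key observation is that the two constrained slots $a_0,a_d$ may be occupied by either player but are never vacated. Wanda's rule on each of her non-final moves is then: if some constrained slot is still empty, fill an empty one with an arbitrary nonzero value; otherwise fill any interior slot. A short move-count shows that, being the last player, she has at least two non-final moves exactly when $d\geq 4$; as there are only two constrained slots, both $a_0$ and $a_d$ are occupied well before the end. Consequently the unique slot left for her final move is interior, and planting the root at $1$ there is unconditionally legal.

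The genuine obstacle is the low-degree range $d\in\{2,3\}$, where Wanda has only a single non-final move and Nora can force her to set a constrained coefficient last; here the forbidden zero can really occur (for instance over $\F_2$, realizing $f(1)=0$ through the leading coefficient would demand $a_d=0$). The fix is to spend that one interior move annihilating the interior contribution to $f(1)$. For $d=2$ Wanda opens by setting the lone interior coefficient $a_1=0$; the board then consists of the two ends, and whichever end she fills last must equal minus the other end, which is nonzero. For $d=3$ she answers Nora symmetrically: if Nora opens on a constrained slot, Wanda occupies the other constrained slot and keeps an interior slot in reserve for the end, reducing to the easy interior-last case; if Nora opens on an interior coefficient, Wanda sets the remaining interior coefficient to its negative, so that the two interior terms cancel in $f(1)$ and the constrained coefficient she is forced to play last again comes out as minus the already-placed nonzero end. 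The step needing the most care, and the crux of the lemma, is precisely this legality bookkeeping for $d\in\{2,3\}$: checking that the cancellation leaves the final constrained value provably nonzero no matter how Nora distributes her remaining moves.
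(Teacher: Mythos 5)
Your proposal is correct and follows essentially the same route as the paper: for $d\geq 4$ Wanda arranges for her last slot to be interior and plants the root $1$ there, for $d=2$ she zeroes out $a_1$ first, and for $d=3$ she pairs the coefficients so that the forced final end-coefficient is automatically nonzero. The only cosmetic difference is that the paper's $d=3$ strategy mirrors coefficients with equal values to make $-1$ a root, whereas you negate them to make $1$ a root; the legality bookkeeping you emphasize is exactly the content of the paper's argument.
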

  \begin{proof}
    For $d \geq 4$, Wanda can make sure that either she or Nora has chosen both $a_d$ and $a_0$ before the last move. Then,
    \[
      f(x) = g(x) + a_i x^i
    \]
    for some $i \notin \{ 0, d \}$, f{i}xed $g$ and $a_i$ yet to be determined by Wanda. She lets $a_i = - g(1)$ and wins with $1$ being a root of $f$ in $R$.\\[-0.2cm]
    
    For $d = 3$, Nora has to be Player\,\Rmnum{1} if Wanda has to play last. Whenever Nora chooses $a_0$ or $a_3$, Wanda picks the other to be the same. This strategy is also employed for $a_1$ and $a_2$ so that $-1$ is a root of the f{i}nal polynomial.\\[-0.2cm]
    
    When $d = 2$, Wanda plays the f{i}rst move as well. She lets $a_1 = 0$ and later picks $a_0$ (or $a_2$) to be $-a_2$ (or $-a_0$) so that $\pm 1$ are roots of our polynomial.\\[-0.2cm]
    
    We have already set{t}led the mat{t}er of linear polynomials in Lemma~\ref{L:lin}.
  \end{proof}

\section{F{i}nite cyclic rings}
  In this sect{i}on, $R$ will always be a f{i}nite cyclic ring $\Z/N\Z$ but not a f{i}eld. This, in part{i}cular, implies that $N$ is not a prime. We first establish the advantage possessed by the last mover for even degree polynomials.
  \begin{lemma}\label{L:even}
    Let $d > 1$ be even and $R = \Z / N\Z$. Then, whosoever plays last has a winning strategy.
  \end{lemma}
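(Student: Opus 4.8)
The plan is to split on the identity of the last player. Since $d$ is even, the game lasts $d+1$ (an odd number) of moves, so Player~\Rmnum{1} is the one to move last; hence ``whosoever plays last'' is always the player who also moved first. If that player is Wanda there is nothing to do: she makes the last move and wins by Lemma~\ref{L:GWZ}. So the entire content of the lemma is the case where \emph{Nora} is simultaneously the first and the last mover, and I would devote the proof to exhibiting her winning strategy.

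The guiding principle is that a root of $f$ in $\Z/N\Z$ reduces, under the canonical surjection $\Z/N\Z \to \F_p$ for any prime $p \mid N$, to a root of $\bar f$ in $\F_p$; consequently it suffices for Nora to force $\bar f$ to have \emph{no} root in $\F_p$ for a single prime divisor $p$ of $N$ (which exists because $N$ is composite). Nora's strategy would be: on her very first move set $a_0 = 1$, then play arbitrarily (but legally) until a single coefficient $a_j$ is left, and win on the final move by a counting argument. Claiming $a_0$ at the outset guarantees $\bar a_0 = 1 \neq 0$, so that $r = 0$ is never a root of $\bar f$; and because $a_0$ is now spent, the last undetermined coefficient necessarily has index $j \geq 1$.

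For the final move, write $f = g + a_j x^j$ with $g$ already fixed. For each $r \in \F_p^\times$ the equation $\bar g(r) + \bar a_j\, r^{\,j} = 0$ pins $\bar a_j$ to a single residue, so at most $p-1$ choices of $\bar a_j$ create a root among the nonzero elements of $\F_p$, while $r = 0$ has already been excluded. Since $\F_p$ offers $p > p-1$ residues, Nora can select a value of $\bar a_j$ avoiding every root; she then lifts it to an element of $\Z/N\Z$, and when $j = d$ she uses the $N/p \geq 2$ available lifts to also respect the constraint $a_d \neq 0$. With $\bar f$ root-free over $\F_p$, the polynomial $f$ has no root in $\Z/N\Z$ and Nora wins.

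The one delicate point---indeed the crux of the whole argument---is preventing the $r = 0$ contribution from sabotaging the reduction: Wanda could otherwise seize $a_0$ and force $p \mid a_0$, making $0$ a spurious root of $\bar f$. This is exactly where the hypothesis that $d$ is even enters: it makes the last mover coincide with the first mover, so Nora-as-last-player gets to grab $a_0$ before Wanda can touch it. (For odd $d$ the last mover is Player~\Rmnum{2}, Nora can no longer protect $a_0$ on move one, and a genuinely different analysis---reflected in the cube-free hypotheses of the later results---becomes necessary.) Everything else is a routine pigeonhole count that is insensitive to how Wanda fills in the intermediate coefficients.
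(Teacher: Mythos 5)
Your proposal is correct and follows essentially the same route as the paper: the parity observation reduces everything to Nora being simultaneously the first and last mover, she opens with $a_0 = 1$ so that no non-unit (in particular no element reducing to $0$) can be a root, and a pigeonhole count on the final coefficient finishes the game. The only cosmetic difference is that the paper counts the at most $\varphi(N)+1 < N$ forbidden values directly in $\Z/N\Z$, whereas you reduce modulo a single prime $p \mid N$ and count $p-1 < p$ forbidden residues before lifting (using $N/p \geq 2$ to keep $a_d \neq 0$); both versions use the compositeness of $N$ in the same essential way.
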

  \begin{proof}
    If Wanda makes the last move, she can win by Lemma~\ref{L:GWZ}. Now, consider the other scenario. As $d$ is even, the total number of coef{f}icients to be chosen is odd. Nora has to be Player\,\Rmnum{1} too if she has to play the last move. She chooses $a_0 = 1$ so that
    \[
      f(x) = xg(x) + 1
    \]
    and any non-unit cannot be a root of the polynomial obtained at the end of the game. On her last move, Nora faces $f(x) = h(x) + a_ix^i,\ i \neq 0$ with $h$ f{i}xed and $a_i$ to be chosen by her next. There are $\varphi (N)$ many units in $\Z / N\Z$ and she should only avoid choosing from the set
    \[
      \left\{ - u^{-i}h(u)\pmod{N} \mid u \in R^* \right\} \cup \{ 0 \}
    \]
    which has cardinality at most $\varphi(N) + 1 < N$, as we assumed $N$ is not a prime number. Her requirements for $a_i$ are then evidently feasible.
  \end{proof}
  We are lef{t} to study the case when $d > 1$ is odd and Nora is the last player. This is examined in several steps depending on the prime-factorizat{i}on of $N$.
  \begin{lemma}\label{L:psq}
    If $N = p^2$ for some prime $p$ and $d \geq 2$, then the last player can win.
  \end{lemma}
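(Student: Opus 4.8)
The plan is to dispose of the two easy configurations by citation and then win the remaining one with a totient count on the final move. By Lemma~\ref{L:GWZ} the last player wins whenever that player is Wanda, so I may assume Nora plays last; and by Lemma~\ref{L:even} I may assume $d$ is odd, hence $d\ge 3$. Since $d+1$ is then even, the final $(d+1)$-th move belongs to Player\,\Rmnum{2}, so Nora is the second player and Wanda opens. The principle I would exploit throughout is that \emph{if the constant term $a_0$ is a unit of $R=\Z/p^2\Z$, then every root is a unit}: a non-unit $r$ lies in $pR$, so reducing modulo $p$ kills every term involving $r$ and gives $f(r)\equiv a_0\not\equiv 0\pmod p$. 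Granting this, Nora's endgame is automatic: if the coefficient she fixes last is any $a_j$ with $j\ge 1$, then each unit root $r$ forces the single forbidden value $a_j=-h(r)r^{-j}$, where $h=f-a_jx^j$ is already determined and $r^j\in R^*$; she must therefore dodge at most $\varphi(p^2)+1=p^2-p+1<p^2$ values (the extra $1$ being $0$ only when $j=d$), and a legal winning choice survives.

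With this in hand I would have Nora commit to a line of play already on her first move, reacting to Wanda's opening. If Wanda's opening leaves $a_0$ unchosen, Nora immediately sets $a_0=1$; thereafter $a_0$ is a unit and is no longer the last coefficient, so by the principle above her final move (some $a_j$ with $j\ge 1$) wins regardless of how the intervening coefficients are filled. If Wanda opens by setting $a_0$ to a unit, Nora is already in this situation and simply plays out the game, her last coefficient again carrying positive index.

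The only genuinely different branch is when Wanda opens with $a_0$ equal to a \emph{nonzero} non-unit, say $a_0=pm_0$ with $p\nmid m_0$. There I would have Nora answer by setting $a_1=0$. Then for every non-unit $r$ one computes $f(r)\equiv a_0\pmod{p^2}$, since the linear term vanishes and every higher term is $\equiv 0$; as $a_0\ne 0$ this rules out all non-unit roots outright (in particular $f(0)=a_0$ is safe). After these two moves the untouched coefficients are exactly $a_2,\dots,a_d$, an even number $d-1$ of them, to be chosen alternately with Wanda moving first, so Nora still fixes the last one and its index is $\ge 2$. The unit roots are then eliminated by the same totient count as before, and the lemma follows in every case.

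The bookkeeping that needs care — and what I expect to be the main obstacle — is precisely this last branch: one must verify that committing $a_1$ on move two still leaves a positive-index coefficient for Nora's final move (which the parity of $d-1$ guarantees) and that the bound $\varphi(p^2)+1<p^2$ really holds for every prime $p$, including $p=2$, where it reads $3<4$ and is tight. Everything else rests on the uniform observation that a unit constant term confines all roots to $R^*$, together with Lemmata~\ref{L:GWZ} and \ref{L:even} covering the complementary parities.
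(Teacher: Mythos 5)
Your proof is correct and follows essentially the same route as the paper's: reduce to odd $d$ with Wanda opening via Lemmata~\ref{L:GWZ} and \ref{L:even}, answer with $a_0=1$ unless Wanda has already taken $a_0$ to be a non-unit, in which case play $a_1=0$ to kill non-unit roots, and finish with the count of at most $\varphi(p^2)+1<p^2$ forbidden values on the last move. The only difference is presentational — you make explicit the observation that a unit constant term confines all roots to $R^*$, which the paper leaves implicit by citing the argument of Lemma~\ref{L:even}.
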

  \begin{proof}
    Wanda can win if she is the last player by following the strategy in Lemma~\ref{L:GWZ}. Nora can win using Lemma~\ref{L:even} if $d$ is even and she is the last player. Therefore, let us assume that $d$ is odd and Nora plays last. This also means that Wanda is Player\,\Rmnum{1} for us.\\[-0.2cm]
    
    If Wanda's f{i}rst move is to choose some $a_i$ for $i \neq 0$, Nora immediately picks $a_0 = 1$ next. By the same reasoning as in the proof of Lemma~\ref{L:even}, Nora can finish the game of{f} with a polynomial which has no roots in $R$. This argument also works when Wanda chooses $a_0$ from $R^*$ on her f{i}rst move.\\[-0.2cm]
    
    Let $a_0 = u_0p$ be Wanda's f{i}rst pick for some representat{i}ve $u_0 \in R^*$. This leads to Nora f{i}xing $a_1 = 0$ following which no mult{i}ple of $p$ can be a root of $f$ in $\Z / p^2\Z$. Her choice is legal as $d > 1$. On her last move, she has to avoid at most $\varphi(N) + 1 < N$ many values of $a_i$ corresponding to $\varphi(N)$ many units in $R$ in addit{i}on to the zero element. Nora does so and wins the game.
  \end{proof}
  One may extend this much further as shown below.
  \begin{lemma}\label{L:cfree}
    The last-mover advantage holds for cube-free numbers and $d \geq 2$.
  \end{lemma}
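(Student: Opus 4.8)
The plan is to reduce, via Lemmata~\ref{L:GWZ} and \ref{L:even}, to the one outstanding configuration: $d$ odd (hence $d \geq 3$), with Nora moving last and thus Wanda opening as Player~\Rmnum{1}. If $N = p^2$ this is Lemma~\ref{L:psq}, so I may assume $N$ has at least two distinct prime divisors and write $N = \prod_p p^{e_p}$ with each $e_p \in \{1,2\}$.

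The organising principle is the Chinese Remainder Theorem: under $\Z/N\Z \cong \prod_p \Z/p^{e_p}\Z$ the isomorphism carries $f$ evaluated at $(r_p)_p$ to $\bigl( f(r_p) \bigr)_p$, and since the $r_p$ may be chosen independently, $f$ has a root in $\Z/N\Z$ if and only if it has a root modulo $p^{e_p}$ for every $p \mid N$. Hence it suffices for Nora to keep $f$ rootless modulo a single well-chosen prime power: she runs a winning line in that coordinate and plays arbitrarily in the others, moving last in the targeted coordinate as she does globally. Crucially, the conditions $a_0, a_d \neq 0$ bind only on the ambient $\Z/N\Z$-coefficients, so Nora may legally let $a_d \equiv 0 \pmod{p}$ while keeping $a_d \neq 0 \pmod{N}$; this extra room is what will rescue her in the delicate sub-case below.

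I would then split on Wanda's first move. If Wanda does not seize $a_0$, Nora plays $a_0 = 1$ at once; if Wanda plays $a_0$ as a unit, nothing need be done. Either way $f(0) = a_0 \in R^*$, so any zero-divisor $r$ obeys $f(r) \equiv a_0 \not\equiv 0 \pmod{p}$ for any prime $p \mid r$ (every other term carries a factor $r$), ruling out all non-unit roots robustly; Nora then erases the remaining unit roots on her last move, avoiding $\varphi(N) + 1 < N$ values as in Lemma~\ref{L:even}. The remaining case is Wanda opening with $a_0 = c$, a nonzero zero-divisor. Not every prime can satisfy $p^{e_p} \mid c$, lest $N \mid c$ and $a_0 = 0$; so some $p$ has $c \not\equiv 0 \pmod{p^{e_p}}$, and Nora targets this coordinate. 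When $e_p = 1$, or $e_p = 2$ with $p \nmid c$, the value $a_0 = c$ is already a unit modulo $p^{e_p}$ and she finishes as above within that coordinate; when $e_p = 2$ and $p \parallel c$, she replies $a_1 = 0$ (legal as $d > 1$), forcing $f(r) \equiv c \not\equiv 0 \pmod{p^2}$ for every $r$ with $p \mid r$, and mops up the units modulo $p$ on her final move, dodging at most $\varphi(p^2) < p^2$ values.

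The main obstacle is the tight coordinate $e_p = 1$: over the field $\F_p$ the standalone last-mover advantage can fail, e.g.\ for cubics in characteristic $3$, where avoiding a root forces the leading coefficient to vanish. I must check that the counting nonetheless survives here. With $f(0) = c \neq 0$ secured, Nora need only dodge the at most $p-1$ bad values of her final coefficient $a_i$ (necessarily $i \geq 1$, as $a_0$ is Wanda's), which leaves at least one admissible residue modulo $p$; and should that residue be $0$ with $i = d$, she takes $a_d \equiv 0 \pmod{p}$ yet $a_d \neq 0 \pmod{N}$, possible precisely because $N$ carries a second prime factor. Verifying this, along with the legality of every intermediate move and the global nonvanishing of any leading coefficient she sets, completes the argument.
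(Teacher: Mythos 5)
Your proposal is correct and follows essentially the same route as the paper's proof: reduce via the Chinese Remainder Theorem to a single prime power $p^{e_p}$ ($e_p\in\{1,2\}$) that does not divide Wanda's non-unit $a_0$, play $a_1=0$ in the sub-case $e_p=2$ with $p\parallel a_0$, and count the forbidden residue classes on the last move. The only tactical divergence concerns the leading coefficient: the paper has Nora seize $a_d=1$ immediately so that her final coefficient is an interior one (thereby sidestepping the permutation-polynomial issue over $\F_p$), whereas you allow $a_d$ to come last and exploit the compositeness of $N$ to realize the residue class $0 \bmod p^{e_p}$ by a nonzero element of $\Z/N\Z$ --- both devices resolve the same obstruction, so this is a cosmetic rather than substantive difference.
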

  \begin{proof}
    In view of prior observat{i}ons, we may restrict ourselves to when
    \begin{enumerate}
      \item $d > 1$ is odd,
      \item $N$ is not a prime power,
      \item Wanda is Player\,\Rmnum{1}, and
      \item her f{i}rst choice is $a_0 \in R \setminus R^*$.
    \end{enumerate}
    Nora tries to f{i}nd a prime $p$ such that $N = p^{k}q$ with $k \in \{ 1, 2 \},\ ( p, q ) = 1$ and $p$ does not divide $a_0$. If her search is successful, she chooses $a_d = 1$. On her last move, Nora chooses the value of $a_i$ for which $f\pmod{p}$ has no roots in $\F_p$. This is realizable because $f\pmod{p}$ cannot have $0$ as a root in $\F_{p}$ which means that Nora has to avoid at most $p - 1$ classes of $a_i$ modulo $p\ (i \neq 0, d )$ corresponding to as many classes in $\F_{p}^*$. Thereaf{t}er, $f$ cannot have a root in $R$ since $f\pmod{p}$ does not have a root in the quot{i}ent ring $\Z / p\Z$.\\[-0.2cm]
    
    The search for a suitable $p$ as above may fail only when there exists a prime $p$ dividing $a_0$ such that $p^2$ divides $N$ but not $a_0$. In this case, Nora lets $a_1 = 0$ (allowed since $d > 1$) and plays arbitrarily till before her last move. At that stage, she reduces $f$ modulo $p^2$ and chooses the remaining coef{f}icient $a_i$ so that $f\pmod{p^2}$ does not have a root in $\Z/p^2\Z$. This is possible as she has to avoid at most $\varphi(p^2) + 1 < p^2$ many equivalence classes of $a_i\pmod{p^2}$ coming from $\varphi(p^2)$ many units in $\Z/p^2\Z$ and the fact that $a_d$ is not allowed to be zero. Then, $f$ cannot have a root in $R$.
  \end{proof}
  The previous lemmata build upon the ideas of \citeauthor*{GWZ18} for f{i}nite f{i}elds $\F_p$. We can avoid dealing with permutat{i}on polynomials here because Wanda has to necessarily choose the constant term on her f{i}rst move if she wants to win for odd degree polynomials. The results so far may suggest to the reader that for non-linear polynomials, the last mover in the coef{f}icient-choosing game has an advantage over rings with zero divisors. Our lemma below shows any such intui{t}ion to be false.
  \begin{lemma}\label{L:oddpow}
    If $N = p^{2k + 1}$ for some prime $p,\ k \in \N^*$ and $d$ is odd, then Wanda always has a winning strategy.
  \end{lemma}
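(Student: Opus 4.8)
The plan is to reduce immediately to the only nontrivial configuration and then to exhibit an explicit winning strategy for Wanda as Player\,\Rmnum{1}. Since $d$ is odd, the number $d+1$ of coefficients is even, so if Wanda is Player\,\Rmnum{2} she makes the last move and wins by Lemma~\ref{L:GWZ}; by Lemma~\ref{L:lin} we may also assume $d\geq 3$. Hence the remaining case is Wanda being Player\,\Rmnum{1} with Nora moving last. The conceptual engine is a valuation filtration: if $r\in R$ satisfies $\ord_p(r)\geq k$, then $\ord_p(r^i)\geq 3k\geq 2k+1$ for every $i\geq 3$, so $r^i\equiv 0$ and $f(r)\equiv a_0+a_1r+a_2r^2 \pmod{p^{2k+1}}$. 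Thus a root of valuation at least $k$ is governed \emph{only} by the three lowest coefficients; in particular the high-degree coefficients, which Nora may well control (including on her final move), cannot destroy such a root once the low ones are fixed.

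Wanda's first move is $a_0=p^{2k}$, which is legal ($p^{2k}\neq 0$ in $R$) and places the point $(0,2k)$ at the left end of the Newton polygon. Writing $a_1=p^{\,\ord_p(a_1)}a_1'$ with $a_1'$ a unit, the leftmost edge joins $(0,2k)$ to $(1,\ord_p(a_1))$ and has horizontal length one, which makes $r_0=-p^{\,2k-\ord_p(a_1)}(a_1')^{-1}$ an honest element of $R$ with $a_0+a_1r_0=0$; then $f(r_0)=\sum_{i\geq 2}a_ir_0^{\,i}$ has valuation at least $2\bigl(2k-\ord_p(a_1)\bigr)$, which is $\geq 2k+1$ precisely when $\ord_p(a_1)\leq k-1$. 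So the moment $a_1$ is fixed with $\ord_p(a_1)\leq k-1$ (in particular with $a_1$ a unit, giving a root of valuation $2k$), Wanda already possesses a root and need only finish legally. This means the \emph{only} move on which Nora can avoid an immediate loss is to answer $a_0=p^{2k}$ on move $2$ by fixing $a_1$ with $\ord_p(a_1)\geq k$: if instead she leaves $a_1$ unset, Wanda claims $a_1=1$ on move $3$ and wins by the valuation-$2k$ root, and if she sets $\ord_p(a_1)\leq k-1$ the root $r_0$ is already present.

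In that surviving case Nora has fixed $a_1$ with $\ord_p(a_1)\geq k$ on move $2$, so $a_2$ is still free on move $3$, and Wanda uses it to make $r=p^k$ a root: since $f(p^k)\equiv a_0+a_1p^k+a_2p^{2k}\pmod{p^{2k+1}}$, this reduces to $p^{2k}\bigl(1+a_1'+a_2\bigr)$ when $\ord_p(a_1)=k$ and to $p^{2k}(1+a_2)$ when $\ord_p(a_1)\geq k+1$. Wanda therefore chooses $a_2\equiv-(1+a_1')\pmod p$ in the balanced case and $a_2\equiv-1\pmod p$ otherwise, forcing $u=1$ to be a root of the residual quadratic and hence $f(p^k)\equiv 0$; note that here the finiteness of the residue field is not needed, as the root is produced explicitly. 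In every branch Wanda secures a root using only $a_0,a_1,a_2$ by her second move, and the valuation filtration guarantees it survives all later coefficients (including Nora's last move, and the leading $a_d$, which she merely keeps nonzero). The main obstacle is precisely the last-mover parity: a counting argument shows Nora can always arrange to make the final choice among $\{a_0,a_1,a_2\}$, so Wanda cannot plan to \emph{answer} that move; the resolution is to make the winning root robust against every coefficient Nora still controls, which the identity $r^i\equiv 0$ for $i\geq 3$ supplies, the sole genuine casework being the solvability of the residual quadratic when $\ord_p(a_1)=k$.
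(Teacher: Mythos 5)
Your proposal is correct and follows essentially the same strategy as the paper: Wanda opens with $a_0=\pm p^{2k}$, the case analysis splits on $\ord_p(a_1)$ being $\leq k-1$, $=k$, or $\geq k+1$ (or $a_1$ not yet chosen), and the key observation in each branch is that a root of valuation at least $k$ annihilates all terms of degree $\geq 3$ modulo $p^{2k+1}$. The only cosmetic differences are the sign of $a_0$ and that in the middle cases you fix the root to be $p^k$ and solve for $a_2$ modulo $p$, whereas the paper fixes $a_2\in\{0,1\}$ and adjusts the root to $u_1^{-1}p^k$ or $\pm p^k$.
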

  \begin{proof}
    In the light of Lemmata~\ref{L:lin} and \ref{L:GWZ}, we only need to examine the case when $d > 1$ and Wanda is Player\,\Rmnum{1}. She begins by choosing $a_0 = -p^{2k}$.\\[-0.2cm]
    
    If Nora picks a coef{f}icient other than $a_1$, Wanda sets $a_1 = 1$ so that
    \[
      f(x) = x^2g(x) + x - p^{2k}
    \]
    and $x = p^{2k}$ will be a root of $f$ in $R$. If Nora's f{i}rst move is $a_1 = u_1p^i$ for some choice of representat{i}ve $u_1 \in R^*$ and $i < k$, we similarly have
    \[
      f(x) = h(x) + u_1p^ix - p^{2k}
    \]
    whence $u^{-1}_1p^{2k - i}$ is a root of $f$ in $R$ and Wanda is dest{i}ned to be the winner. This is because for $n \geq 2$, we get $n ( 2k - i ) - 2k > 0$ and all higher degree terms const{i}tut{i}ng $h(x)$ are automat{i}cally zero for $x$ which are mult{i}ples of $p^{2k - i}$.\\[-0.2cm]
    
    Next, let $a_1 = u_1p^k$ be Nora's f{i}rst move for some choice of representat{i}ve $u_1 \in R^*$. Wanda lets $a_2 = 0$ (allowed as $d \neq 2$) so that for $n > 2$,
    \[
      nk = 2k + ( n - 2 ) k \geq 2k + 1
    \]
    making $x = u_1^{-1}p^k$ to be a root of any $f$ obtained af{t}erwards.\\[-0.2cm]
    
    If Nora chooses $a_1 = b_1p^{k + 1}$ for some $b_1 \in R$, Wanda simply lets $a_2 = 1$ leading to $f(x) = x^3g(x) + x^2 + b_1p^{k + 1}x - p^{2k}$. Clearly, $\pm p^k$ are roots of $f$ in $\Z/N\Z$.
  \end{proof}
  The same is also true for all but two even prime powers.
  \begin{lemma}\label{L:evnpow}
    If $N = p^{2k}$ for some prime $p,\ k \geq 3$ and $d$ is odd, then Wanda can always win.
  \end{lemma}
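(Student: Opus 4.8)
The plan is to adapt the power-of-$p$ root construction of Lemma~\ref{L:oddpow} to the even exponent, the essential new difficulty being a parity mismatch. By Lemmata~\ref{L:lin} and~\ref{L:GWZ} I may assume $d \geq 3$ is odd and that Wanda moves first: since $d + 1$ is even the last mover is Player\,\Rmnum{2}, so Wanda failing to be last means she is Player\,\Rmnum{1}. Wanda opens with $a_0 = -p^{2k - 2}$, which is nonzero in $R$ as $2k - 2 < 2k$. Her goal throughout is to force a root of $f$ of the shape $p^{m}$ or $u^{-1} p^{m}$, and the hypothesis $k \geq 3$ will enter through the fact that cubic and higher monomials evaluated at such roots vanish modulo $p^{2k}$.

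First I would dispatch the easy branches. If Nora's reply avoids $a_1$, fixing some $a_j$ with $j \geq 2$, then Wanda plays $a_1 = 1$ and $x = p^{2k - 2}$ is a root under all subsequent play, because $a_1 x + a_0 = 0$ while each $a_j x^{j}$ with $j \geq 2$ has valuation at least $4k - 4 \geq 2k$. If Nora does set $a_1 = u_1 p^{v}$ with $u_1 \in R^*$, then in the small-valuation regime $v \leq k - 2$ the element $u_1^{-1} p^{2k - 2 - v}$ is already a root regardless of the remaining coefficients: the constant and linear terms cancel and every higher monomial has valuation at least $2(2k - 2 - v) \geq 2k$. In either situation Wanda has secured the win immediately.

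The heart of the argument is the complementary regime $v \geq k - 1$, where no single monomial can cancel $a_0$ at a power of $p$. Here I would aim for the root $x = p^{k - 1}$ and spend Wanda's next move on $a_2$. Since every $a_n p^{n(k - 1)}$ with $n \geq 3$ has valuation at least $3(k - 1) = 3k - 3 \geq 2k$ exactly when $k \geq 3$, one obtains
\[
  f(p^{k - 1}) \equiv -p^{2k - 2} + a_1 p^{k - 1} + a_2 p^{2k - 2} \pmod{p^{2k}}.
\]
Because $v \geq k - 1$, the quotient $a_1 p^{-(k - 1)}$ is well defined modulo $p^2$, so Wanda may legally choose $a_2 \equiv 1 - a_1 p^{-(k - 1)} \pmod{p^2}$ (allowed as $a_2$ is neither leading nor constant, $d \geq 3$). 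Substituting collapses the right-hand side to $0$, so $p^{k - 1}$ is a root whatever Nora does afterwards. This one parametrized choice of $a_2$ subsumes both the $a_2 = 0$ and $a_2 = 1$ sub-cases that appeared for odd exponents in Lemma~\ref{L:oddpow}.

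It remains to confirm exhaustiveness — Nora avoids $a_1$, or $v \leq k - 2$, or $v \geq k - 1$ — and that the timing is sound for every odd $d \geq 3$: Wanda commits $a_0$ on her first move and her corrective coefficient on her second, after which only indices $\geq 2$ survive and contribute nothing at the selected root. The main obstacle is the borderline valuation $v = k$, where a literal copy of Lemma~\ref{L:oddpow} would demand a root whose square matches $a_0$, i.e.\ a half-integral power of $p$; the remedy of loading both the linear and the constant corrections onto $a_2$ at $x = p^{k - 1}$ is exactly what rescues the argument and fixes the threshold at $k \geq 3$, which also explains why $N = p^{4}$ (and in particular $N = 16$) must fall outside this lemma.
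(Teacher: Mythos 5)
Your proof is correct, and it follows the same overall template as the paper's: Wanda opens with a constant term equal to minus a power of $p$, the analysis splits on the $p$-adic valuation $v$ of Nora's reply $a_1$, low valuations are killed by a single-monomial cancellation, and high valuations are killed by a suitable choice of $a_2$, with $k \geq 3$ entering exactly through $3(k-1) \geq 2k$. The execution differs in one genuine respect: the paper opens with $a_0 = -p^{2k-1}$ and must then distinguish three regimes ($v < k$ with root $u_1^{-1}p^{2k-1-v}$; $v = k$ with $a_2 = 0$ and root $u_1^{-1}p^{k-1}$; $v \geq k+1$ with $a_2 = p$ and roots $\pm p^{k-1}$), whereas your choice $a_0 = -p^{2k-2}$ makes the quadratic term at $x = p^{k-1}$ land at exactly the valuation of $a_0$, so the single prescription $a_2 \equiv 1 - a_1p^{-(k-1)} \pmod{p^2}$ absorbs the entire range $v \geq k-1$ (including $a_1 = 0$, which your phrasing ``$a_1 = u_1p^v$ with $u_1 \in R^*$'' technically omits but which the same formula handles with $a_2 = 1$). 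This buys a cleaner two-case analysis at no cost in generality; the paper's version, by keeping $a_2 \in \{0, p\}$, stays textually closer to the odd-exponent Lemma~\ref{L:oddpow} it is adapting. Your verifications of the valuation inequalities and of the legality and timing of Wanda's moves are all sound.
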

  \begin{proof}
    We concentrate on the case when $d > 1$ and Wanda is Player\,\Rmnum{1}. She begins by choosing $a_0 = -p^{2k - 1}$.\\[-0.2cm]
    
    If Nora picks a coef{f}icient other than $a_1$, Wanda sets $a_1 = 1$ so that
    \[
      f(x) = x^2g(x) + x - p^{2k - 1}
    \]
    and $x = p^{2k - 1}$ will be a root of $f$ in $R$ following from $( p^{2k - 1} )^2 \equiv 0\pmod{p^{2k}}$. If Nora's f{i}rst move is $a_1 = u_1p^i$ for some choice of representat{i}ve $u_1 \in R^*$ and $i < k$, we have $f(x) = h(x) + u_1p^ix - p^{2k - 1}$ whence $u^{-1}_1p^{2k - 1 - i}$ is a root of $f$ in $R$ and Wanda will be the winner. This is because for $n \geq 2$,
    \[
      n ( 2k - 1 - i ) = 2k + 2 ( n - 1 ) k - n ( i + 1 ) \geq 2k + ( n - 2 ) k \geq 2k
    \]
    and all higher degree terms in $h(x)$ are zero for mult{i}ples of $p^{2k - 1 - i}$.\\[-0.2cm]
    
    Next, let $a_1 = u_1p^k$ be Nora's f{i}rst move for some choice of representat{i}ve $u_1 \in R^*$. Wanda lets $a_2 = 0$ so that for $n > 2$,
    \[
      n ( k - 1 ) = 2k + ( nk - 2k - n )
    \]
    making the term inside the parentheses on the right side to be non-negat{i}ve for $k \geq 3$ and $x = u_1^{-1}p^{k - 1}$ to be a root of any $f$ obtained later.\\[-0.2cm]
    
    If Nora chooses $a_1 = b_1p^{k + 1}$ for some $b_1 \in R$, Wanda simply lets $a_2 = p$ leading to $f(x) = x^3g(x) + px^2 + b_1p^{k + 1}x - p^{2k - 1}$. Clearly, $\pm p^{k - 1}$ are roots of $f$ in $\Z/N\Z$ for the same reason as given in the previous paragraph.
  \end{proof}
  An addi{t}ional feature of the last two results is that if $d$ is greater than one and odd, Nora may even be allowed to choose the leading coe{f}ficient $a_d$ to be $0$.
  \begin{corollary}\label{C:CRT}
    Let $d$ be odd and $N = p^{k} N_2$ for some prime $p,\ k \in \{ 3, 5, 6, 7, \ldots \}$ and $p$ not dividing $N_2$. Then, Wanda can always win.
  \end{corollary}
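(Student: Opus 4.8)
The plan is to decompose $R = \Z/N\Z \cong \Z/p^k\Z \times \Z/N_2\Z$ by the Chinese Remainder Theorem, which applies because $(p, N_2) = 1$, and to reduce the statement to the prime-power cases of Lemmata~\ref{L:oddpow} and \ref{L:evnpow}. The guiding observation is that an element $x \in R$ is a root of $f$ if and only if $x \bmod p^k$ is a root of $f \bmod p^k$ in $\Z/p^k\Z$ and $x \bmod N_2$ is a root of $f \bmod N_2$ in $\Z/N_2\Z$. Wanda's aim is therefore to secure both conditions simultaneously with a single line of play.

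First I would clear the easy branches. If $d = 1$, Wanda wins by Lemma~\ref{L:lin}. Since $d$ is odd, the total number $d + 1$ of coefficients is even, so Player\,\Rmnum{2} makes the last move; should Wanda be Player\,\Rmnum{2}, she wins by Lemma~\ref{L:GWZ}. Hence we may assume throughout that $d > 1$ is odd and that Wanda is Player\,\Rmnum{1}.

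The heart of the argument is to run the prime-power strategy on the $\Z/p^k\Z$ factor while trivializing the $\Z/N_2\Z$ factor. Because our $k$ is either odd with $k \geq 3$ or even with $k \geq 6$, Lemma~\ref{L:oddpow} or Lemma~\ref{L:evnpow} furnishes Wanda, as Player\,\Rmnum{1}, a strategy over $\Z/p^k\Z$ that forces $f \bmod p^k$ to acquire a root $x_0$ while committing her only on the residues modulo $p^k$ of at most two coefficients: the constant term (the value prescribed there, namely $a_0 \equiv -p^{k-1} \pmod{p^k}$) and one of $a_1, a_2$. Wanda plays exactly this strategy on the $p^k$-component, except that on her opening move she picks $a_0$ through the Chinese Remainder Theorem so that additionally $a_0 \equiv 0 \pmod{N_2}$. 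This $a_0$ is nonzero in $R$ (since $a_0 \not\equiv 0 \pmod{p^k}$), hence a legal first move. Now $f(0) = a_0 \equiv 0 \pmod{N_2}$, so $0$ is a root of $f \bmod N_2$ irrespective of every other coefficient. Lifting the pair $(x_0 \bmod p^k,\ 0 \bmod N_2)$ back through the isomorphism then yields a genuine root of $f$ in $R$, and Wanda wins.

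The one point demanding care is that Wanda's two prescribed moves remain available and that the coprime factor never interferes. Opening with $a_0$ as Player\,\Rmnum{1}, after Nora's reply at most two coefficients are fixed, so the coefficient Wanda wants next---either $a_1$, if Nora did not take it, or else $a_2$, which is then still free---is available, exactly as in the proofs of Lemmata~\ref{L:oddpow} and \ref{L:evnpow}. Since those choices are prescribed only modulo $p^k$, she may assign their residues modulo $N_2$ arbitrarily, and these free residues cannot spoil anything because the root on the $N_2$-side is pinned to $0$ and depends solely on $a_0 \bmod N_2$. I expect this bookkeeping to be the main (and only mild) obstacle; once it is verified, the Chinese Remainder lift completes the proof.
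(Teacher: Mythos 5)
Your proposal is correct and follows essentially the same route as the paper: decompose $R$ via the Chinese Remainder Theorem, run the Player\,\Rmnum{1} strategy of Lemmata~\ref{L:oddpow}/\ref{L:evnpow} on the $\Z/p^k\Z$ component, force $a_0 \equiv 0 \pmod{N_2}$ so that $0$ is a root on the $N_2$ component, and lift the pair of roots back to $R$. The only cosmetic difference is that the paper has Wanda set \emph{all} her coefficients to $0$ modulo $N_2$, whereas you correctly observe that only $a_0 \bmod N_2$ matters.
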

  \begin{proof}
    We may just focus on $d > 1$ and Wanda being Player\,\Rmnum{1} again. Also, recall the ring isomorphism
    \begin{equation}\label{E:ri}
      \Z / N\Z\ \simeq\ ( \Z / p^k\Z ) \times ( \Z / N_2\Z )
    \end{equation}
    using the Chinese Remainder Theorem. By Lemmata~\ref{L:oddpow} and \ref{L:evnpow}, Wanda has a winning strategy beginning with a choice of the constant term in $\Z / p^k\Z$. If $a_{0, 1} \in \Z/p^k\Z$ denotes such a choice for Wanda, she chooses $a_0 \in \Z / N\Z$ for which $a_0 \equiv a_{0, 1}$ modulo $p^k$ while $a_0 \equiv 0$ modulo $N_2$. For all of Nora's subsequent moves, Wanda reduces the coef{f}icients modulo $p^k$ and computes her response over $\Z / p^k\Z$. If $a_{i, 1}$ is part of Wanda's winning strategy there, she always picks $a_i \in R$ for which $a_i \equiv a_{i, 1}\pmod{p^k}$ and $a_i \equiv 0\pmod{N_2}$.\\[-0.2cm]
    
    We denote $x_1 \in \Z / p^k\Z$ to be a root of $f\mod{p^k}$ in the quot{i}ent ring $\Z / p^k\Z$, where $f$ is the polynomial obtained at the end. Then, the element $x_0 \in R$ such that $x_0 \equiv x_1$ modulo $p^k$ and $x \equiv 0$ modulo $N_2$ will be a root of $f$ in $R$ by the ring isomorphism~\eqref{E:ri}.
  \end{proof}
 For a f{i}xed integer polynomial $f \in \Z[x]$ and a prime number $p$, the problem of count{i}ng the number of its roots modulo the prime power $p^k$ seems to be very challenging and having myriad applicat{i}ons. The best known determinist{i}c algorithm has a t{i}me complexity exponent{i}al in $k$. We point to~\cite{KRRZ} and the references therein for more on this. A Las Vegas randomized algorithm for comput{i}ng the number of roots in the ring $\Z / p^k\Z$ is also given over there which takes t{i}me less than some polynomial in terms of $k$.

\subsection{\protect{The curious case of fourth prime powers}}\label{S:biquad}
  The discussion so far tells us that it remains to analyse the outcome of the game played for non-linear polynomials over quot{i}ents of \Z\ by fourth powers of prime numbers and mult{i}ples thereof. The arguments are only slightly di{f}ferent for even and odd primes. We present the one for odd primes f{i}rst, followed by a proof for $p = 2$.
  \begin{lemma}\label{L:podd}
    Let $d$ be odd and $N = p^4$ for some prime $p \neq 2$. Then, Wanda always has a winning strategy.
  \end{lemma}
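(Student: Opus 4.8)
The plan is to mimic Lemmata~\ref{L:oddpow} and~\ref{L:evnpow}. By Lemmata~\ref{L:lin} and~\ref{L:GWZ} I may assume that $d \geq 3$ is odd and that Wanda is Player\,\Rmnum{1}, so Nora moves last. The one genuinely new ingredient is Wanda's opening coefficient. The earlier choices $a_0 = -p^{2k-1}$ were engineered so that a root $u^{-1}p^{\,j}$ annihilates every monomial of degree at least $2$; when $2k = 4$ this forces $j = 1$, and then the cubic monomial $a_3 x^3$ has valuation $3$ and need not vanish modulo $p^4$. This is exactly why the naive transcription of the previous proof breaks down. Instead I will let Wanda begin with $a_0 = -p^2$ and make up for the lost cancellation through a Hensel-type lift, in which the freedom in the second $p$-adic digit of the root swallows the unknown residue $a_3 \bmod p$. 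The hypothesis $p \neq 2$ will enter precisely because the reduced equation governing the lift is $y^2 \equiv 1$.

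With $a_0 = -p^2$ fixed, two branches are immediate. If Nora's first reply is any coefficient other than $a_1$, Wanda answers $a_1 = 1$; then $x = p^2$ is a root, since $f(p^2) = -p^2 + p^2 + \sum_{n \geq 2} a_n p^{2n} \equiv 0 \pmod{p^4}$ regardless of all later play, because $p^{2n} \equiv 0$ for $n \geq 2$. If Nora instead opens with $a_1 = u_1$ a unit, the same computation shows that $x = u_1^{-1} p^2$ is already a root, and Wanda continues arbitrarily. The remaining possibility is that Nora opens with $a_1 = u_1 p^{i}$ for some unit $u_1$ and $1 \leq i \leq 3$, or with $a_1 = 0$. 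To this Wanda responds by claiming the interior coefficient $a_2$ (permissible as $d \geq 3$), setting $a_2 = 0$ if $i = 1$ and $a_2 = 1$ otherwise.

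The core of the proof is then a single-variable lift. Substituting $x = p y$ and dividing by $p^2$ (legitimate in this branch since $a_0$, $a_1 p$ and every $a_n p^n$ with $n \geq 2$ lie in $p^2 \Z$), I get
\[
  p^{-2} f(p y) \equiv a_2 y^2 + (a_1/p)\, y + a_3 p\, y^3 - 1 \pmod{p^2},
\]
all monomials of degree at least $4$ having disappeared. Thus $f(py) \equiv 0 \pmod{p^4}$ amounts to the vanishing of this reduced expression modulo $p^2$. Reducing it once more modulo $p$ erases both the $a_3$-term and, when $i \geq 2$, the linear term; with Wanda's choice of $a_2$ the outcome is the linear form $u_1 y - 1$ (simple root $y \equiv u_1^{-1}$) in the case $i = 1$, and the quadratic $y^2 - 1$ (simple roots $y \equiv \pm 1$, simple exactly because $p$ is odd) in all other cases. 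Since the derivative of the reduced expression is then a unit at the chosen residue, Hensel's Lemma lifts it to a unit $y_0$ with $p^{-2} f(py_0) \equiv 0 \pmod{p^2}$ \emph{for every} residue $a_3 \bmod p$. Consequently $x = p y_0$ is a root of the finished polynomial in $\Z/p^4\Z$ no matter how $a_3, \dots, a_d$ are eventually chosen, so Wanda simply plays out her remaining turns legally, keeping $a_d \neq 0$, and wins.

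The step I expect to be the main obstacle is precisely the verification underlying the lift: that at a valuation-one root only $a_3$ (and only modulo $p$) can interfere, and that choosing $a_2$ to create a \emph{simple} root of the reduction renders that interference harmless. This is where both the switch from $-p^3$ to $-p^2$ and the oddness of $p$ are indispensable. For the companion prime $p = 2$, treated separately in the next lemma, the reduction $y^2 - 1$ degenerates into a double root, so a genuinely different opening and reduced equation will be required — which is the structural reason the two primes are handled in separate statements.
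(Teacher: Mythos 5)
Your proposal is correct and takes essentially the same route as the paper: the identical opening $a_0=-p^2$, the identical case split on Nora's reply, and the identical responses $a_2=0$ when $a_1$ is a unit times $p$ and $a_2=1$ otherwise. The paper's explicit search over $u=u_1^{-1}+kp$ (resp.\ $u=1+kp$) is precisely the one-step Hensel lift you invoke, with $p\neq 2$ entering through the same factor of $2$ in the derivative of the reduced equation.
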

  \begin{proof}
    We may assume $d > 1$. If Wanda is Player\,\Rmnum{1}, she chooses $a_0 = -p^2$. It is enough for her to target the cubic part of the polynomial to be zero modulo $p^4$ for some mult{i}ple of $p$, say $up$, for then the contribut{i}ons from higher terms will each be automat{i}cally zero. 
    \begin{itemize}
      \item If Nora doesn't f{i}x $a_1$ next, Wanda lets it to be equal to $1$ so that $p^2$ is a root of $f$. If Nora does choose $a_1$ to be a unit in $\Z/N\Z$, then $a_1^{-1}p^2$ is a root of our polynomial.
      \item \underline{$\mathbf{a_1 = u_1p}$} : If $a_1 = u_1p$ is Nora's f{i}rst choice for some $u_1 \in ( \Z/N\Z )^*$, Wanda lets $a_2 = 0$. Note that the choice of the representat{i}ve $u_1$ is specif{i}ed up to an addi{t}ive factor of $kp^3$ for some $k \in R$. The ef{f}ect{i}ve port{i}on of the polynomial from Wanda's perspect{i}ve, evaluated at some $x = up$, is
      \begin{equation}\label{E:up}
        a_3\cdot(up)^3 + u_1p\cdot(up) - p^2\ \equiv\ ( a_3u^3p + u_1u - 1 )p^2.
      \end{equation}
      Given that $u_1\pmod{p^2}$ is well-def{i}ned in the quot{i}ent ring $\Z / p^2\Z$ and independent of our choice of representat{i}ve, we let $u_1^{-1}$ denote its inverse in $R/p^2R$. Take one of the shif{t}ed terms $u = u^{-1}_1 + kp$ for all of whom
      \[
        a_3(u^{-1}_1 + kp )^3p\ \equiv\ a_3u_1^{-3}p\pmod{p^2}
      \]
      while $u_1 ( u_1^{-1} + kp ) - 1 \equiv u_1kp$ modulo $p^2$. No mat{t}er what choice of $a_3$ Nora (or Wanda) make, one of these $k$'s will help Wanda to make the term within parentheses on the right side of~\eqref{E:up} to be zero modulo $p^2$. Our polynomial evaluated at such a $up$ will vanish as a consequence.
      
      \item \underline{$\mathbf{a_1 = u_1p^2}$} : If Nora's f{i}rst choice is $a_1 = u_1p^2$ for some unit $u_1$ specif{i}ed up to an addi{t}ive factor of $kp^2$, Wanda lets $a_2 = 1$ so as to face
      \[
        a_3\cdot(up)^3 + (up)^2 +u_1p^2\cdot(up) - p^2\ \equiv\ \big( u (a_3u^2 + u_1 )p + u^2 - 1 \big) p^2\pmod{p^4}.
      \]
      For $u = 1 + kp$, we have $u ( a_3u^2 + u_1 )p \equiv ( a_3 + u_1 )p$ and $u^2 - 1 \equiv 2kp$ modulo $p^2$. As there is an assurance that $p \neq 2$, an appropriate value of $k$ will help Wanda to f{i}nd a root of $f$ in $R$ which is a mult{i}ple of $p$.
      
      \item \underline{$\mathbf{a_1 = b_1p^3}$} : If Nora lets $a_1$ be a mult{i}ple of $p^3$ on her f{i}rst move, Wanda chooses $a_2 = 1$ so that we have a similar situat{i}on as for $a_1 = u_1p^2$ above.
    \end{itemize}
    As we have exhausted all of Nora's possible opt{i}ons, this f{i}nishes the proof.
  \end{proof}
  It should be remarked here that not only can Wanda ensure the f{i}nal polynomial to have roots, she can force those roots to lie in $pR \subset R$ provided $p$ is odd. When $p$ equals $2$, the game is t{i}lted towards her for small values of $d$ only.
    \begin{lemma}\label{L:peven}
    Let $d = 3$ as before while $R = \Z/16\Z$. Then, Wanda can win irrespect{i}ve of being the f{i}rst or the second player.
  \end{lemma}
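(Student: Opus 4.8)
The plan is to dispose of the easy orientation first and then concentrate on the hard one. If Wanda is the second player then, as $d=3$ produces exactly four coefficients and hence four moves, she makes the last move and wins by Lemma~\ref{L:GWZ}, pairing the coefficients so that $-1$ is a root. The entire substance therefore lies in proving that Wanda also wins as Player\,\Rmnum{1}, where Nora holds the last-mover advantage that settled the odd-degree cube-free cases in Lemma~\ref{L:cfree}. In the spirit of Lemma~\ref{L:podd} I would let Wanda open with $a_0=-4=-p^2$ and hunt for a root among the multiples of $2$, exploiting the collapses $x^2\equiv 4,\ x^3\equiv 8\pmod{16}$ for $x\in 2R\setminus 4R$ and $x^2\equiv x^3\equiv 0\pmod{16}$ for $x\in 4R$.

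The organising remark is that evaluation at a multiple of $4$ annihilates $a_2$ and $a_3$ altogether, leaving
\[
  f(4v)\ \equiv\ 4va_1 + a_0\pmod{16}.
\]
Thus, whenever $a_1\in R^*$, a suitable $v$ turns $4v$ into a root irrespective of $a_2,a_3$. So if Nora ever leaves $a_1$ unchosen, Wanda sets $a_1=1$ at her second turn and wins; and if Nora herself assigns a unit to $a_1$, Wanda is already home. This reduces everything to Nora's picking a non-unit $a_1$ on move two, after which only $a_2$ and $a_3$ remain --- Wanda choosing one and Nora the last.

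I would then split according to $a_1\bmod 4$. If $a_1\equiv 2\pmod 4$, Wanda answers $a_2=0$; for a candidate $x=2u$ with $u$ odd one gets $f(2u)\equiv 8a_3 + 2ua_1 + 12\pmod{16}$, and since $2ua_1+12$ runs through $\{0,8\}$ as $u$ varies, $f(2u)$ takes the two values $8a_3$ and $8(a_3+1)$, exactly one of which vanishes modulo $16$. The delicate case --- and the main obstacle --- is $a_1\equiv 0\pmod 4$, for then $f(2u)$ no longer depends on $u$ and every multiple of $4$ evaluates to a fixed nonzero class. Here Wanda plays $a_2$ odd with $a_2\equiv 2\sigma+1\pmod 4$, where $\sigma=(a_1/4)\bmod 2$, so that $f(2u)\equiv 8a_3 + 4a_2 + 8\sigma + 12\pmod{16}$. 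If Nora's final $a_3$ is even this choice forces the right-hand side to be $0$, yielding a root $2u$. If $a_3$ is odd no such even root need exist, and this is exactly where the prime $2$ bites: the derivative $f'(x)=3a_3x^2+2a_2x+a_1$ reduces to $a_3\pmod 2$ at every odd $x$ and is therefore a unit there, while $f(x)\equiv a_2+a_3\equiv 0\pmod 2$ on the odd residues because $a_2,a_3$ are both odd. Hensel's lemma then lifts this simple root modulo $2$ to a unit root of $f$ modulo $16$.

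The one branch deserving genuine care is thus $a_1\equiv 0\pmod 4$ with $a_3$ odd, where a root must be certified rather than exhibited termwise; the Hensel argument above is the clean route. Beyond that I would only verify the bookkeeping: that $a_0=-4\neq 0$ and each prescribed coefficient is legal, that Wanda's reply $a_2$ is unconstrained apart from its class modulo $4$, and that the element produced is a root of the final polynomial and not of a truncation. This also accounts for the remark preceding the statement --- in contrast with odd $p$, Wanda can no longer confine all her roots to $2R$, precisely because the $a_3$-odd branch hands her a unit root.
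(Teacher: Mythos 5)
Your proposal is correct, and the game-theoretic content coincides with the paper's: the same opening $a_0=-4\equiv 12\pmod{16}$, the same reduction to Nora choosing a non-unit $a_1$ (via the observation that $f(4v)\equiv 4va_1+a_0$), the same reply $a_2=0$ when $a_1\equiv 2\pmod 4$, and the same congruence class of $a_2$ modulo $4$ when $4\mid a_1$ (your $a_2\equiv 2\sigma+1\pmod 4$ agrees with the paper's $a_2=1-2u_1$, resp.\ $a_2=1$). The one place you diverge is the certification of a root when $4\mid a_1$ and Nora closes with $a_3$ odd: the paper shows directly that the value set $\{-(a_2u+4u_1u^2-4u^3)\mid u\in R^*\}$ exhausts $(\Z/16\Z)^*$, so any unit $a_3$ forces a unit root $1/u$, whereas you exhibit the unit root by noting $f\equiv 0$ and $f'\equiv 1$ modulo $2$ at odd arguments and applying Hensel lifting. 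Both verifications are sound; yours is arguably cleaner and generalizes more readily, while the paper's stays entirely inside elementary arithmetic in $\Z/16\Z$. Your closing observation that the odd-$a_3$ branch necessarily produces a unit root, in contrast with the odd-$p$ case of Lemma~\ref{L:podd}, matches the remark preceding the statement.
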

  \begin{proof}
    When Wanda is Player\,\Rmnum{1}, she begins by choosing $a_0 = 12 \equiv - 2^2\pmod{16}$. As seen in the proof of Lemma~\ref{L:podd}, Nora is compelled to pick $a_1$ next if she wants to have a chance at winning and it should be a non-unit.
    \begin{itemize}
      \item \underline{$\mathbf{a_1 = 2u_1}$} : If this is Nora's f{i}rst move for some choice of representat{i}ve $u_1 \in ( \Z/16\Z )^*$,
      Wanda can cont{i}nue to follow her strategy as in Lemma~\ref{L:podd} when Nora had chosen $a_1$ to be a unit mult{i}ple of $p$ in $\Z / p^4\Z$. The reader may verify for herself that nothing there prevents Wanda's victory if we take $p$ to be equal to $2$.

      \item \underline{$\mathbf{a_1 = 4u_1}$} : Suppose Nora makes such a choice for some $u_1 \in ( \Z/16\Z )^*$.
      On facing this move from Nora, Wanda picks $a_2 = 1 - 2u_1 \in R^*$. Admi{t}tedly, there is a discret{i}on involved here about the representat{i}ve for $u_1$. It can be checked that Wanda can choose any one of them.\\[-0.3cm]
      
      Now, Nora cannot allow $a_3$ to belong to the set
      \begin{equation}\label{E:units}
        \{ - ( a_2 u + 4u_1 u^2 - 4 u^3 ) \mid u \in R^* \}
      \end{equation}
      or else, the corresponding $1/u$ will be a root of $f$. As Wanda took $a_2$ to be a unit, all these elements in~\eqref{E:units} are units of $R$ too. We claim that for any f{i}xed $u_1$ and $a_2$ as above, this set const{i}tutes $R^*$. Suppose not and let
      \[
        a_2u + 4u_1u^2 - 4u^3 \equiv a_2v + 4u_1v^2 - 4v^3\pmod{16}
      \]
      for some $u, v \in R^*$. On rearranging, we see that $u - v$ has to be a mult{i}ple of $4$ since $a_2$ is not. This makes the right side of the congruence
      \[
        a_2 ( u - v ) \equiv -4u_1 ( u^2 - v^2 ) + 4 ( u^3 - v^3)\pmod{16}
      \]
      to be zero modulo $16$ which in turn requires $u - v$ to be same as well. In part{i}cular, it implies that Nora cannot choose $a_3$ to be a unit in $R$. If $a_3$ is even instead, we will have $f(2) = 0$ in $R$.
      
      \item \underline{$\mathbf{a_1 = 8b_1}$} : If Nora lets $a_1$ be a mult{i}ple of $8$, Wanda chooses $a_2 = 1$ so that Nora can't pick $a_3$ to be odd again (recall the reasoning for $a_1 = 4u_1$) and $\pm 2$ are roots of $f$ otherwise.
    \end{itemize}
    Our claim has been established.
  \end{proof}
  The requirement that the `leading' and the constant coef{f}icients of the polynomial be non-zero is an art{i}f{i}cial technicality of the game introduced to remove redundancies. It would, however, have made no di{f}ference to either players' fortunes in Lemmata~\ref{L:podd} and \ref{L:peven} even if we had allowed Nora the freedom to choose $a_d = 0$ if she wished so. Af{t}er this brief remark, Corollary~\ref{C:CRT} can be strengthened to say that
  \begin{corollary}\label{C:Wanda}
    Let $d$ be odd, $N$ not be cube-free and at least one of the following hold:
    \begin{enumerate}
        \item $d = 3$,
        \item $N = 8N_2$ for some posi{t}ive integer $N_2 \not\equiv 2\pmod{4}$, or
        \item there exists an odd prime $p$ such that $p^3$ divides $N$.
    \end{enumerate}
    Then, Wanda always has a winning strategy.
  \end{corollary}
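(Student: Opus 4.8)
The plan is to reduce, in every case, to a single prime-power component of $N$ and then transport a known winning strategy for Wanda across the Chinese Remainder Theorem exactly as in the proof of Corollary~\ref{C:CRT}. First I would dispose of the parities: the polynomial has $d+1$ coefficients, an even number since $d$ is odd, so the second player moves last; hence if Wanda is Player\,\Rmnum{2} she wins by Lemma~\ref{L:GWZ}, and the case $d = 1$ is settled by Lemma~\ref{L:lin}. Thus I may assume throughout that $d > 1$ and that Wanda is Player\,\Rmnum{1}.

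Next I would extract the relevant prime. Each hypothesis guarantees a prime $p$ with $p^3 \mid N$; write $N = p^k M$ with $p \nmid M$ and $k = v_p(N) \geq 3$. Under (3) I take $p$ to be the odd prime whose cube divides $N$; under (2) I take $p = 2$, and the condition on the cofactor forces $v_2(N) \in \{3\} \cup \{5, 6, 7, \ldots\}$, so that $k \neq 4$; under (1), if some odd prime cube divides $N$ I am back in case (3), and otherwise only $2^3 \mid N$, whence I set $p = 2$. Whenever $k \in \{3, 5, 6, 7, \ldots\}$, Corollary~\ref{C:CRT} applies verbatim and Wanda wins, so the whole problem collapses to the single residual possibility $k = 4$.

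It remains to treat $N = p^4 M$ with $p \nmid M$, which is where the real work lies. Here I would mimic the argument of Corollary~\ref{C:CRT}, but with Lemma~\ref{L:podd} (for odd $p$) or Lemma~\ref{L:peven} (for $p = 2$, which is exactly where hypothesis (1), $d = 3$, becomes indispensable) in the role of the base strategy over $\Z/p^4\Z$. Concretely, Wanda opens with the constant term $a_0$ chosen so that $a_0 \equiv a_{0,1} \pmod{p^4}$ and $a_0 \equiv 0 \pmod{M}$, where $a_{0,1}$ is her opening move from the relevant lemma; thereafter she answers each of Nora's moves by reducing modulo $p^4$, computing the lemma-response $a_{i,1}$, and lifting it to the unique $a_i \in \Z/N\Z$ with $a_i \equiv a_{i,1} \pmod{p^4}$ and $a_i \equiv 0 \pmod{M}$. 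Since the roots furnished by Lemmata~\ref{L:podd} and \ref{L:peven} are multiples of $p$, the corresponding root $x_1 \in \Z/p^4\Z$ lifts to $x_0 \in R$ with $x_0 \equiv x_1 \pmod{p^4}$ and $x_0 \equiv 0 \pmod{M}$; as every monomial $a_i x_0^i$ with $i \geq 1$ vanishes modulo $M$ while $a_0 \equiv 0 \pmod{M}$, we obtain $f(x_0) \equiv 0$ modulo both $p^4$ and $M$, hence $f(x_0) = 0$ in $R$.

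The one point demanding care, and the main obstacle, is the interaction of the lift with the rule that the leading coefficient be nonzero: when Nora picks $a_d \in (\Z/N\Z) \setminus \{0\}$, its reduction modulo $p^4$ may be $0$, so Wanda's base strategy must still win against an opponent permitted to play $a_d = 0$ over $\Z/p^4\Z$. This is precisely the content of the remark preceding the statement, which certifies that Lemmata~\ref{L:podd} and \ref{L:peven} remain winning under that relaxation. On the rare turn where Wanda herself must supply $a_d$, her lemma-response is immaterial modulo $p^4$, so she simply lifts a nonzero residue to keep $a_d \neq 0$ in $R$. Once this bookkeeping is in place the $k = 4$ case closes, and together with the direct appeal to Corollary~\ref{C:CRT} for $k \neq 4$ this covers every instance permitted by the three hypotheses.
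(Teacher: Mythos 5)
Your proposal is correct and follows essentially the same route as the paper: dispose of the cases where Wanda moves last or $d=1$, locate a prime $p$ with $v_p(N)\geq 3$, invoke Corollary~\ref{C:CRT} whenever $v_p(N)\neq 4$, and otherwise transport the $\Z/p^4\Z$ strategies of Lemmata~\ref{L:podd} and \ref{L:peven} across the Chinese Remainder Theorem exactly as in Corollary~\ref{C:CRT}. You are in fact more explicit than the paper on the one delicate point (Nora's leading coefficient reducing to $0$ modulo $p^4$), which the paper handles by the remark preceding the corollary, just as you cite.
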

  \begin{proof}
    Denote $N = p^3N_2$ for some prime $p$ and $N_2 \in \N^*$. We are done by Corollary~\ref{C:CRT} if $p$ does not divide $N_2$ or if the mult{i}plicity of $p$ in $N_2$ is more than $1$. Else, by Lemmata~\ref{L:podd} and \ref{L:peven}, Wanda has a winning strategy beginning with a choice of the constant term in $\Z / p^4\Z$ (here, $d=3$ if $p=2$). As in Corollary~\ref{C:CRT}, this observation completes the proof.
  \end{proof}
  Together with Lemma~\ref{L:cfree}, this completes the picture for cubic polynomials. We need to work a li{t}tle bit more for other higher odd degrees.
  \begin{lemma}\label{L:4plus}
    Let $d > 3$ be odd. Then, whosoever plays last can win over $\Z/16\Z$.
  \end{lemma}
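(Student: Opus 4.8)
The plan is to split on who makes the final move. Since $d$ is odd, the number $d+1$ of coefficients is even, so the last move belongs to Player\,\Rmnum{2}; if that player is Wanda she wins by Lemma~\ref{L:GWZ}, so I would assume throughout that Nora is Player\,\Rmnum{2} and Wanda is Player\,\Rmnum{1}. The guiding observation is that over $\Z/16\Z$ the question of whether an \emph{even} element is a root is decided entirely by the low coefficients: for any even $x$ one has $x^4 \equiv 0 \pmod{16}$, whence
\[
  f(x) \equiv a_0 + a_1 x + a_2 x^2 + a_3 x^3 \pmod{16},
\]
with $x^2 \equiv 4,\ x^3 \equiv 8 \pmod{16}$ when $\ord_2(x) = 1$ and $x^2 \equiv 0 \pmod{16}$ when $\ord_2(x) \geq 2$. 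Nora's task thus decouples: kill all even roots using only $a_0,a_1,a_2,a_3$, and then defeat the $\varphi(16) = 8$ possible \emph{unit} roots on her very last move, where, facing $f(x) = h(x) + a_i x^i$, she need only avoid the at most eight forbidden values $-u^{-i}h(u)$ among her $\geq 15$ admissible picks.

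Wanda opens with $a_0 \neq 0$. If $a_0 \in R^*$, no even element can be a root at all, and Nora plays arbitrarily until her terminal move, which disposes of the unit roots. If $\ord_2(a_0) = 1$, Nora would reply $a_1 = 0$ (legal as $d > 1$): the display then gives $f(x) \equiv a_0 \equiv 2 \pmod 4$ for $\ord_2(x)=1$ and $f(x) \equiv a_0 \not\equiv 0 \pmod{16}$ for $\ord_2(x) \geq 2$, so again no even root survives. The substantive case is $\ord_2(a_0) \in \{2,3\}$, i.e.\ $a_0 \in \{4,8,12\}$. Here too Nora's first reply is $a_1 = 0$, which by itself eliminates every root with $\ord_2(x) \geq 2$ and reduces the $\ord_2(x)=1$ requirement to a single parity congruence in $a_2,a_3$: namely $a_2 + 2a_3 \not\equiv 2 \pmod 4$ when $a_0 = 8$, and $(u_0 + a_2) + 2a_3 \not\equiv 0 \pmod 4$ when $a_0 = 4u_0 \in \{4,12\}$.

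The heart of the argument is then a scheduling claim guaranteeing that Nora can enforce this parity condition while keeping her last move in reserve for the unit roots. After she plays $a_1 = 0$, Wanda has exactly one turn before Nora moves again, so she can pre-empt at most one of $a_2,a_3$. If $a_2$ is still free, Nora sets it to the good parity, which pins $f(x) \in \{4,12\} \pmod{16}$ for $\ord_2(x)=1$ \emph{independently of $a_3$}; if Wanda has already fixed $a_2$ (necessarily to the bad parity), Nora instead chooses $a_3$ to satisfy the congruence. Either way every even root is dead after Nora's second reply, determined solely by $a_0,a_1,a_2,a_3$, so any later coefficient—including the one she saves for the final move—influences unit roots only.

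The main obstacle will be verifying the robustness of this ``claim $a_1$, then claim $a_2$ or repair with $a_3$'' protocol against every ordering Wanda might attempt, rather than any single congruence. This is also where the hypothesis $d > 3$ is essential: it guarantees that $a_2$ and $a_3$ are genuine non-extremal coefficients which Nora can seize on separate turns while still retaining a terminal move, and the breakdown of exactly this resource at $d = 3$ is what permits Wanda's victory in Lemma~\ref{L:peven}. I would close by recording the short parity checks for each $a_0 \in \{4,8,12\}$ and confirming that the $\leq 8$ forbidden values never exhaust Nora's choices on her last move.
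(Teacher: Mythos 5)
Your proposal is correct and takes essentially the same route as the paper: after Wanda's non-unit $a_0$, Nora seizes $a_1$ (the paper uses $a_1=8$ or $a_1=4$ instead of $0$, which only shifts your parity congruence), then either takes $a_2$ with the safe parity or repairs with $a_3$, so that even roots are excluded by $a_0,\dots,a_3$ alone while her reserved last move defeats the at most eight unit roots --- exactly the ``evens first, units later'' decoupling the paper highlights as the reason $d>3$ differs from $d=3$. The one step you leave implicit --- that Wanda must open with a non-unit $a_0$ at all, since otherwise Nora can take $a_0=1$ (or use your unit-$a_0$ observation) and win --- is the same reduction the paper imports from its earlier lemmata, and your framework supplies it immediately.
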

  \begin{proof}
    One has to only invest{i}gate what happens when Wanda is Player\,\Rmnum{1} for else, she can certainly win. It has been explained before that Wanda has to choose $a_0$ to be a non-unit on her f{i}rst move provided her desire to be the winner. If this $a_0 = 2u_0$ for some choice of representat{i}ve $u_0 \in R^*$, Nora can play the strategy for $f\pmod{4}$ to not have any roots in $\Z / 4\Z$. We now examine
    \begin{itemize}
      \item \underline{$\mathbf{a_0 = 4u_0}$} : Let this be Wanda's choice for some choice of representat{i}ve $u_0\in R^*$. In this case, Nora picks $a_1 = 8$ immediately af{t}er. If Wanda doesn't f{i}x $a_2$ next, Nora may let it be equal to zero at her second move. This ensures that any mult{i}ple of $2$ cannot be a root of $f$ obtained in the end as all but the constant term of the polynomial are divisible by $2^3$ for $x = 2b$. In other words, Nora has to worry about elements of $R^*$ alone for her last move implying that she can be the winner.\\[-0.3cm]
      
      Suppose Wanda does choose some $a_2$ on her second move. As $a_1 \equiv 2^3$ and $a_0 \not\equiv 0$, any mult{i}ple of $4$ cannot be a root of $f$. If Nora wants to eliminate the possibility that $2u\ (u\in R^*)$ is a root, then she needs
      \[
        (2u)^4\cdot h(2u) + a_3\cdot(2u)^3 + a_2\cdot(2u)^2 + 2^3\cdot 2u + 4u_0 \not\equiv 0\pmod{16}
      \]
      which is the same as saying that
      \[
        a_3\cdot(2u)^3 + a_2\cdot(2u)^2 + 4u_0 \not\equiv 0\pmod{16}
      \]
      or equivalently, Nora wants
      \[
        8a_3\ \not\equiv\ - 4u ( a_2 + u_0u^2 )\pmod{16}
      \]
      for all $u \in R^*$. We remind the reader that $u_0$ is well-def{i}ned in the quot{i}ent $\Z / 4\Z$ and as is $a_2$. When the lat{t}er is even, any value of $a_3$ will do. Else, the map $u \mapsto -u (a_2 + u_0u^2 ) \pmod{4}$ from $R^*$ to $\Z / 4\Z$ is constant for any f{ixed} $a_2, u_0 \in ( \Z / 4\Z )^*$. Nora can, therefore, declare $a_3$ to be a di{f}ferent mult{i}ple of $8$ and rule out $2u$ from being roots of $f$. She takes care of the unit elements of $R$ on her last move.
      
      \item \underline{$\mathbf{a_0 \equiv 8\pmod{16}}$} : Then, Nora chooses $a_1 = 4$. If Wanda doesn't choose $a_2$ immediately af{t}er that, Nora can let it be equal to $1$ at her next move ruling out all odd mult{i}ples of $2$ from being roots of $f$. This is because otherwise there will exactly one term of the polynomial which is not divisible by $8$. Even mult{i}ples of $2$ cannot be roots of such an $f$ anyway. Our arguments also hold if Wanda chooses $a_2$ from $R^*$.\\[-0.3cm]
      
      Let us assume that Wanda picks $a_2 = 2b_2$. Nora would like to have
      \[
        a_3\cdot(2u)^3 + 2b_2\cdot(2u)^2 + 4\cdot(2u) + 8 \not\equiv 0\pmod{16}
      \]
      for all $u \in R^*$ which is possible if{f}
      \[
        a_3 \not\equiv - ( b_2u + u^2 + u^3 )\pmod{2}
      \]
      for `all' units in $\Z / 2\Z$. As $b_2$ has already been f{i}xed before, she can choose an $a_3$ as required. The units of $\Z/16\Z$ are prevented from being roots of $f$ on Nora's last turn.
    \end{itemize}
    Since we covered all of Wanda's opt{i}ons, the proof is done.
  \end{proof}
  The crucial di{f}ference between Lemma~\ref{L:peven} and \ref{L:4plus} is that for $d = 3$, the leading coef{f}icient $a_3$ can't be zero and Nora has to simultaneously stop all elements of $R$ from being roots of $f$ when choosing $a_3$. For odd $d > 3$, she has adequate freedom to handle even integers f{i}rst and worry about the units later.
  \begin{corollary}\label{C:last}
    If $d > 3$ and $N = 16N_2$ where $N_2$ is a cube-free odd integer, then the last player has a winning strategy.
  \end{corollary}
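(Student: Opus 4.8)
The plan is to reduce to the single remaining configuration and then combine the two workhorse lemmas through the Chinese Remainder Theorem. If $d$ is even the claim is immediate from Lemma~\ref{L:even}, and if Wanda makes the last move she wins by Lemma~\ref{L:GWZ}; so I would assume throughout that $d>3$ is odd and that Nora plays last, which forces Wanda to be Player~\Rmnum{1}. As in Lemmata~\ref{L:psq}, \ref{L:cfree} and \ref{L:4plus}, I would first dispose of the openings that hand Nora an easy win: if Wanda's first move is some $a_i$ with $i\neq 0$ she answers $a_0=1$, and if Wanda opens with a unit $a_0\in R^*$ then only units of $R$ can be roots of $f$; in either case Nora wins by choosing her final coefficient to avoid the at most $\varphi(N)+1<N$ forbidden classes, exactly as in Lemma~\ref{L:even}. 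Hence I may assume Wanda opens with a constant term $a_0\in R\setminus R^*$ that is nonzero.

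Next I would invoke the isomorphism $R\simeq(\Z/16\Z)\times(\Z/N_2\Z)$ from the Chinese Remainder Theorem together with the observation dual to the one powering Corollary~\ref{C:CRT}: since reduction modulo a divisor of $N$ is a ring homomorphism, if the reduction of $f$ modulo some prime power dividing $N$ has no root in the corresponding quotient, then $f$ has no root in $R$. So Nora only has to beat $f$ in one factor. She simulates the game inside a chosen factor by reducing each of Wanda's coefficients into that factor and lifting her own prescribed responses back to $R$, filling the unused components arbitrarily and keeping her entries nonzero in $R$ whenever the constraint on $a_d$ requires it.

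The crux is then a clean dichotomy governed by $a_0\bmod N_2$. If $a_0\not\equiv 0\pmod{N_2}$, I claim Nora wins inside the cube-free factor $\Z/N_2\Z$ by the strategy of Lemma~\ref{L:cfree}: writing $N_2=\prod_i p_i^{e_i}$ with each $e_i\in\{1,2\}$, the relation $N_2\nmid a_0$ produces an index $i$ with $p_i^{e_i}\nmid a_0$, whence either $p_i\nmid a_0$ (so the mod-$p_i$ branch of Lemma~\ref{L:cfree} applies once Nora sets $a_d=1$) or $e_i=2$ with $p_i\mid a_0$ but $p_i^2\nmid a_0$ (so the mod-$p_i^2$ branch applies once Nora sets $a_1=0$). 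In the complementary case $a_0\equiv 0\pmod{N_2}$, the hypothesis that $a_0$ is nonzero in $R$ forces $a_0\not\equiv 0\pmod{16}$, so $a_0\bmod 16$ is a legal nonunit constant term and Nora wins inside $\Z/16\Z$ by Lemma~\ref{L:4plus}. Because $d+1$ and the move order are the same in every factor, Nora stays the last player in whichever sub-game she enters, so the hypotheses of Lemmata~\ref{L:cfree} and \ref{L:4plus} are genuinely met.

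The step I expect to demand the most care---and the reason $d>3$ rather than $d\geq 3$ is essential---is checking that the simulated sub-strategy withstands the extra room Wanda has in $R$, notably her ability to play a coefficient that reduces to zero in the chosen factor. For the $\Z/16\Z$ branch this is precisely what Lemma~\ref{L:4plus} addresses: the residue of $a_d$ modulo $16$ is immaterial, since at even arguments all terms of degree $\geq 4$ already vanish modulo $16$ while the units of $R$ are dealt with on Nora's last move, so Wanda gains nothing by forcing $a_d\equiv 0\pmod{16}$. The remaining bookkeeping---that the final coefficient left to Nora is either intermediate or, if it is $a_d$, that discarding the single value $0$ still leaves a legal dodging class---follows at once from the counts $p-1<p$ and $\varphi(p^2)+1<p^2$ and from the case analysis already completed in Lemma~\ref{L:4plus}.
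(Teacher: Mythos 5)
Your proof is correct and takes essentially the same route as the paper: Nora locates a prime power $p^{\eps}$ exactly dividing $N$ (either $16$, or $p^{\eps}\in\{p,p^2\}$ for an odd prime $p\mid N_2$) with $a_0\not\equiv 0\pmod{p^{\eps}}$, and then runs the corresponding strategy of Lemma~\ref{L:cfree} or Lemma~\ref{L:4plus} on $f$ reduced modulo $p^{\eps}$. One harmless slip: when $a_0\equiv 0\pmod{N_2}$, the residue $a_0\bmod 16$ can still be a unit (take $a_0=N_2$ odd), but Lemma~\ref{L:4plus} and your own earlier remark already cover that case, since only units of $\Z/16\Z$ can then be roots and Nora dodges them on her final move.
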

  \begin{proof}
    We conf{i}ne ourselves to when Wanda is Player\,\Rmnum{1} and $d$ is odd so that Nora is the last player. Wanda has to necessarily choose $a_0$ to be a non-unit on her f{i}rst move itself if she would like to win. At this stage, Nora f{i}nds a prime $p$ such that $N = p^{\eps}q$ with $( p, q ) = 1$, $a_0 \not\equiv 0\pmod{p^{\eps}}$ and $\eps$ having a posi{t}ive value in accordance with the statement of the claim. She then plays the strategy for $f\pmod{p^{\eps}} \in ( \Z/p^{\eps}\Z ) [x]$ to not have any roots in $\Z/p^{\eps}\Z$.
  \end{proof}
  
\section{\protect{Quot{i}ent f{i}eld of a complete DVR}}\label{S:ult}
  Let \O\ be a complete discrete valuat{i}on ring with \K\ being its f{i}eld of quot{i}ents. We denote a generator of the maximal ideal $\mathfrak{p} \subsetneq \O$ by $\rho$. For example, $K$ can be a f{i}nite extension of $\Q_p$ or $\F (( T ))$, the f{i}eld of formal Laurent series in a transcendental variable $T$ with coef{f}icients coming from some `f{i}eld of constant' \F. In the lat{t}er set{t}ing, the variable $T$ plays the role of a uniformiser.\\[-0.1cm]
  
  If $\O^* = \O \setminus \mathfrak{p}$ is the mult{i}plicat{i}ve group of units of \O, we say that the integer $n$ is the \emph{order} of a non-zero element $\alpha$ when $\alpha \in \rho^n\O^*$. It is then extended to the whole of \K\ by def{i}ning $\ord (0) = + \infty$. A unique and well-def{i}ned extension of the \ord\ funct{i}on is also possible for the algebraic closure of \K. The f{i}eld $\overline{\K}^{\text{alg}}$ is endowed with a norm given by
  \[
    |\alpha| := e^{-\ord \alpha}
  \]
  which helps us to have a not{i}on of distance on such f{i}elds. It turns out to be ultrametric in nature. The game for linear polynomials over $K$ has been dealt with in Lemma~\ref{L:lin} and this mot{i}vates us to go further.
  \begin{lemma}\label{L:quad}
    For $d = 2$, the last player has a winning strategy over \K.
  \end{lemma}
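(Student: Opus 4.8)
The plan is to split along who makes the final move. A quadratic has exactly three coefficients $a_0,a_1,a_2$, so three moves are played and the last move belongs to Player\,\Rmnum{1}. If Wanda is Player\,\Rmnum{1} she wins by Lemma~\ref{L:GWZ}, so the only remaining situation is the one in which \emph{Nora} is Player\,\Rmnum{1} (and hence the last mover), and I must exhibit a strategy that prevents $f$ from acquiring a root in $\K$.

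A first instinct is to have Nora keep the middle coefficient for herself, fixing $a_0$ and $a_2$ on the first two moves and then choosing $a_1$ last so that the discriminant $a_1^2 - 4a_0a_2$ becomes a non-square. This is awkward: whether a given $a_1$ makes $a_1^2 - D$ a non-square (with $D = 4a_0a_2$) is transparent only when $\ord D$ is odd, and Wanda, playing after Nora's opening, can arrange $\ord D$ to be even, forcing a Hensel-type analysis; moreover the whole discriminant picture degenerates in characteristic $2$. I would therefore avoid this route entirely and have Nora open instead with $a_1 = 0$, which is legal since only $a_0$ and $a_2$ are required to be nonzero.

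After this opening the only coefficients left are $a_0$ and $a_2$, so on her single move Wanda is forced to assign a nonzero value to one of them, and Nora assigns the other on the final move. The resulting polynomial is the pure quadratic $f(x) = a_2 x^2 + a_0$, which has a root in $\K$ precisely when $-a_0 a_2^{-1}$ is a square in $\K$. The decisive input from the valuation structure is that $\K$ always contains a nonzero non-square, namely the uniformiser $\rho$: were $\rho = \beta^2$ we would get $1 = \ord \rho = 2\,\ord\beta$, which is even, a contradiction. Hence Nora can always close out: if Wanda sets $a_2$, Nora responds with $a_0 = -\rho\,a_2$, and if Wanda sets $a_0$, Nora responds with $a_2 = -\rho^{-1} a_0$. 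In either case $-a_0 a_2^{-1} = \rho$, a non-square, so $f$ has no root in $\K$ and Nora wins.

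The step I would treat as the real crux is simply making the reduction airtight and uniform in the characteristic. One checks that after $a_1 = 0$ Wanda has no admissible move other than fixing $a_0$ or $a_2$ to a nonzero value, that Nora's closing choice is in each case a permissible nonzero element, and that the non-square test is the correct winning condition. This last point survives characteristic $2$: there $-a_0 a_2^{-1} = a_0 a_2^{-1}$, and a root of $a_2x^2+a_0$ would force $\rho$ into the image of the (injective) squaring map, which it avoids by the order computation above. Once the existence of $\rho$ as a non-square is recorded, the argument is complete.
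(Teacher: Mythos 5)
Your proposal is correct and follows essentially the same route as the paper: Nora opens with $a_1 = 0$ and then completes the pure quadratic $a_2x^2 + a_0$ so that $-a_0/a_2$ has odd valuation, hence is a non-square. The paper states this as the general parity condition $\ord a_2 \not\equiv \ord a_0 \pmod{2}$ (leaving Nora many admissible choices, which matters for later corollaries), while you instantiate it with the single choice $-a_0/a_2 = \rho$; the underlying argument is identical.
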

  \begin{proof}
    Let us focus on Nora being the f{i}rst as well as the last player simultaneously. She begins by choosing $a_1 = 0$. Af{t}er Wanda has chosen any non-zero $a_0$ (or $a_2$), Nora simply picks $a_2$ (or $a_0$) to be such that $\ord a_2 \not\equiv \ord a_0\pmod{2}$. Clearly, many such admissible choices are available to her and it guarantees that the quadrat{i}c polynomial $a_2x^2 + a_0$ has no root in $K$ owing to order considerat{i}ons of the two contribut{i}ng terms.
  \end{proof}
  It is to be noted for future purposes that Nora can addi{t}ionally have all of her choices during the proof of Lemma~\ref{L:quad}, including the last one, to be rat{i}onal numbers when $K = \Q_p$. She may similarly choose all her $a_i$'s to belong to $\F_p ((T))$ when $K = \F((T))$ for some $\F \subseteq \overline{\F_p}^{\text{alg}}$, if she desires so. Nora's strategy also succeeds when the polynomial coef{f}icients are to be chosen from the discrete valuat{i}on ring \O\ by both the players. On the other hand, Wanda's winning strategy over \O\ and as the last player is still governed by Lemma~\ref{L:GWZ}. We now skip the case of $d = 3$ for a moment in pursuit of other higher goals.
  \begin{prop}\label{P:biquad}
    For $d = 4$, Nora can win over $K$ if she is the last player.
  \end{prop}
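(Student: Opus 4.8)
The plan is to win by the same ultrametric mechanism used in Lemma~\ref{L:quad}, now steering the Newton polygon of the quartic. Since $d=4$ the game has five coefficients, an odd number, so the last player is also the first; hence Nora is Player~\Rmnum{1}, owning moves $1,3,5$, while Wanda owns moves $2,4$. Writing $v_i := \ord a_i$, the valuations of the roots of $f$ in $\overline{\K}^{\text{alg}}$ are exactly the negatives of the slopes of the lower convex hull of the points $(i,v_i)$, and any root lying in \K\ must have integer valuation. So it suffices for Nora to force \emph{every} slope of this Newton polygon to be non-integral; then no root can sit in \K. I will concentrate essentially all of her freedom in her final move.

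I isolate two endgame gadgets. \textbf{Gadget C} (an endpoint is free on move~$5$): say $a_0$ is left (the case $a_4$ is symmetric). Nora takes $v_0$ hugely negative with $v_0 \equiv v_4 + 2 \pmod 4$. Then $(0,v_0)$ is the unique lowest vertex, the hull collapses to the single segment joining it to $(4,v_4)$ -- every interior point $(i,v_i)$, $1 \le i \le 3$, lies above it because that segment tends to $-\infty$ there -- and its slope $(v_4-v_0)/4$ is a half-integer since $v_4 - v_0 \equiv 2 \pmod 4$. The crucial feature is that the (possibly adversarial) interior valuations become irrelevant. \textbf{Gadget A} ($a_2$ is free on move~$5$, with $a_1 = a_3 = 0$ already played): the polynomial is biquadratic and the hull uses only $(0,v_0),(2,v_2),(4,v_4)$. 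If $v_0 \equiv v_4 \pmod 2$, Nora takes $v_2$ hugely negative of the opposite parity, splitting the hull into two segments of slopes $(v_2-v_0)/2$ and $(v_4-v_2)/2$, both half-integers; if $v_0 \not\equiv v_4 \pmod 2$, she takes $v_2$ very large, leaving a single segment of slope $(v_4-v_0)/4$, non-integral because $v_4 - v_0$ is then odd.

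It remains to force move~$5$ into one of these two situations. Nora opens with $a_1 = 0$; at move~$3$ she plays $0$ into $a_3$ if it is available and into $a_2$ otherwise (at least one of $a_2,a_3$ survives Wanda's single move~$2$), and she \emph{never} spends an early move on an endpoint. Thus endpoints can only be removed by Wanda. If Wanda removes at most one endpoint, then all three non-endpoints get played during moves $1$--$4$ (Nora's two plus Wanda's at least one), so the coefficient forced onto move~$5$ is an endpoint and Gadget~C applies. The only way move~$5$ is \emph{not} an endpoint is if Wanda spends both her moves on $a_0$ and $a_4$; but in that branch $a_3$ was available at move~$3$, so Nora played $a_3 = 0$, leaving $a_2$ for move~$5$ with $a_1 = a_3 = 0$ -- precisely the hypotheses of Gadget~A. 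One checks move~$5$ can never be $a_1$ or $a_3$, so these two cases are exhaustive.

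The main obstacle I anticipate is the parity interaction of the two endpoints, which Wanda controls jointly with the turn order: the transparent two-segment trick of Gadget~A needs $v_0 \equiv v_4 \pmod 2$, and Wanda will try to break this while simultaneously dumping a low-valuation coefficient at an odd slot to sabotage any single-segment attempt. The resolution is that Nora need not win the parity battle at all: whenever an endpoint is free on move~$5$ she invokes Gadget~C, whose hugely negative endpoint drives the connecting segment to $-\infty$ across the interior and so annihilates the effect of every interior coefficient Wanda may have planted, irrespective of parities; and the residual case is exactly the one where Wanda has used both moves on the endpoints and therefore left both odd coefficients to Nora. Note that finiteness of the residue field is never invoked, consistent with its being required only for $d = 3$.
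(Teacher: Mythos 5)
Your argument is correct and follows essentially the same route as the paper: the identical game-tree reduction (open with $a_1=0$, then zero out $a_3$ or $a_2$ so that the fifth move is forced onto an endpoint or onto $a_2$ with $a_1=a_3=0$), followed by the same ultrametric endgame. The only difference is presentational: you package the valuation analysis as "make every Newton-polygon slope non-integral" via Lemma~\ref{L:slo} and commit to one extreme choice (hugely negative order in a fixed residue class), whereas the paper does the equivalent order bookkeeping directly with the quantities $M_1,M_2$ and leaves Nora a whole admissible set of orders, of which your choice is a special case.
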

  \begin{proof}
    Nora gets two moves before her last one. Hence, she can ensure that either she gets to choose both $a_1$ and $a_3$ to be zero in her f{i}rst two turns or the last coef{f}icient to be determined by her is one of $a_0$ or $a_4$. If it is $a_0$, Nora wants
    \begin{equation}\label{E:biquad}
      a_0 \neq - ( a_4x^4 + \cdots + a_1x )
    \end{equation}
    for all $x$ in $K^*$. Let
    \[
      M_1 := \min \big\{ \ord a_3 - \ord a_4,\ \frac{1}{2} ( \ord a_2 - \ord a_4 ),\ \frac{1}{3} ( \ord a_1 - \ord a_4 ) \big\},
    \]
    with the const{i}tuent terms ignored if the corresponding $a_i = 0$, and let $x$ have order less than $M_1$. When $a_1 = a_2 = a_3 = 0$, we set $M_1 = \infty$ and $x$ can be any non-zero element. Then, the right side of~\eqref{E:biquad} has its order belonging to the set $\{ \ord a_4 + 4n \mid n < M_1 \}$. For all other $x \neq 0$, the orders are bounded from below by
    \[
      M_2 := \min\,\{\,\ord a_i + iM_1 \mid i = 1, \ldots, 4\,\}.
    \]
    Nora may choose any f{i}eld element, even a rat{i}onal number or an element of $\F_p((T))$ as the case may be, whose order doesn't lie in the union
    \[
      \{\,\ord a_4 + 4n \mid n < M_1\,\}\ \cup\ \{\,n \geq M_2\,\}
    \]
    and win the game.\\[-0.2cm]
    
    If $a_4$ was lef{t} for her to decide at the last move, we can take the related polynomial $g \in \K[x]$ such that $f(x) = x^4g(1/x)$ for all non-zero $x$ and $a_4$ becomes the constant term of $g$. Since $a_0a_4 \neq 0$, neither of $f$ or $g$ can have zero as a root and $f(x) = 0$ if{f} $g(1/x) = 0$ for $x \in \K^*$. This borrowed trick from~\citep{GWZ18} reduces the problem to the previous scenario discussed.\\[-0.2cm]
    
    F{i}nally, there is exactly one of the two possibili{t}ies when the last coef{f}icient to be chosen is $a_2$. Either $\ord a_4 \equiv \ord a_0\pmod{2}$ or not. At any rate, it is implicit that Nora had eliminated $a_1$ and $a_3$ earlier. She would now like to have
    \begin{equation}\label{E:par}
      a_2 \neq - ( a_4x^2 + a_0x^{-2} )\quad\forall x \in K^*.
    \end{equation}
    Nora declares $a_2 = 0$ if the pari{t}ies of $\ord a_0$ and $\ord a_4$ are di{f}ferent modulo $2$.\\[-0.2cm]
    
    Else, the orders of all the non-zero f{i}eld elements given by the right side of~\eqref{E:par} have same parity modulo $2$ as $\ord a_0$ and $\ord a_4$ for $x \in K^*$ such that $\ord x \neq ( \ord a_0 - \ord a_4 ) / 4$. For any $x$ with $\ord x = ( \ord a_0 - \ord a_4 ) / 4$, the order of the expression on the right side of~\eqref{E:par} is at least $( \ord a_0 + \ord a_4 ) / 2$. Nora may choose $a_2$ with its order in the complementary subset
    \[
      \{\,\ord a_0 + 2n + 1 \mid n \in \Z\,\}\ \cap\ \{\,n < ( \ord a_0 + \ord a_4 ) / 2\,\}.
    \]
    She may furthermore have such an $a_2$ to be a rat{i}onal number or belong to $\F_p((T))$ depending on whether $\K = \Q_p$ or $\K \supset \F_p ((T))$, respect{i}vely.
  \end{proof}
  The above proof captures all the complexit{i}es which may arise for the remaining higher degrees and then some. We proceed without further ado.
  \begin{theorem}\label{Th:dgeq5}
    For $d > 4$, the last player can win over $K$.
  \end{theorem}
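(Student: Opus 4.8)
The plan is to reduce immediately to the only non-trivial scenario: if Wanda makes the last move she wins by Lemma~\ref{L:GWZ}, so I assume Nora is the last player and must guarantee the absence of a root in $\K$. I would mimic and streamline the order-theoretic mechanism already visible in Proposition~\ref{P:biquad}, but organise it so that Nora never has to zero out coefficients; she only has to control (i)~which index she is forced to fill last and (ii)~the order of that final coefficient.

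Suppose Nora's last move is the coefficient $a_j$ and write $f(x)=a_jx^j+x^jG(x)$ with $G(x)=\sum_{i\ne j}a_ix^{\,i-j}\in\K[x,x^{-1}]$ already determined. Since $a_0a_d\neq0$ by the rules, $f(x)=0$ for some $x\in\K^*$ forces $a_j=-G(x)$, hence $\ord a_j\in S:=\{\ord G(x)\mid x\in\K^*\}$. Thus Nora wins the instant she can choose $a_j$ with $\ord a_j\notin S$, and she may take $a_j=\rho^{m}$ (or $p^{m}$, resp.\ $T^{m}$) to keep the coefficient rational when $\K=\Q_p$ or inside $\F_p((T))$. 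The heart of the argument is the ultrametric description of $S$. Writing $t=\ord x$, the concave piecewise-linear envelope $\psi(t)=\min_{i\ne j}(\ord a_i+(i-j)t)$ governs $\ord G(x)$ exactly at those $t$ where the minimum is attained once. Because $a_d$ carries the largest exponent $d-j$ and $a_0$ the smallest exponent $-j$, the two unbounded branches of $\psi$ have slopes $d-j$ and $-j$; consequently, below any sufficiently negative bound, $S$ is contained in the union of the two residue classes $\ord a_d+(d-j)\Z$ and $\ord a_0+j\Z$, every other contribution (the finitely many interior breakpoints, and the half-line $[\psi_{\max},\infty)$ produced by ties) being bounded below.

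It then remains to produce an integer far below this bound avoiding both residue classes. Two residue classes with moduli $d-j$ and $j$ cover all of $\Z$ only when one modulus equals $1$ or when $d-j=j=2$; the first happens exactly for $j\in\{1,d-1\}$ and the second exactly for $d=4$. This is where the hypothesis $d>4$ enters decisively: for $2\le j\le d-2$ the two moduli are at least $2$ and cannot both equal $2$, so a gap always exists, while the endpoint indices $j\in\{0,d\}$ are even easier (one branch of $\psi$ tends to $+\infty$, leaving a single progression of step $d$, as in the $a_0$-case of Proposition~\ref{P:biquad}, the $a_d$-case following by passing to $x^{d}g(1/x)$). Hence the only indices Nora must steer clear of on her last move are $a_1$ and $a_{d-1}$.

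Finally I would supply the short combinatorial step ensuring a good final index. Since only the two slots $a_1,a_{d-1}$ are dangerous and Nora controls the last of the $d+1\ge6$ moves, she simply fills $a_1$ and $a_{d-1}$ (say with $0$) on her earliest moves whenever Wanda has not already occupied them; as Nora has at least three moves for $d\ge5$, both dangerous slots are filled strictly before her last move, so the slot she fills last is some admissible $a_j$. The main obstacle, and the place the degree bound is genuinely used, is precisely the covering dichotomy above: one must exclude the step-$1$ progressions coming from $j\in\{1,d-1\}$ and the $d=4$ coincidence $d-j=j=2$, and verify that the bounded interior and tie contributions to $S$ cannot reach arbitrarily far down. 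Everything else is routine bookkeeping with orders, carried out uniformly in both parities of $d$.
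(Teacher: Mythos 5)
Your proposal is correct and follows essentially the same route as the paper's proof: reserve two early moves to ensure $a_1$ and $a_{d-1}$ are filled before Nora's final turn, then analyse the order of the forced value of the last coefficient via the piecewise-linear minimum of $\ord a_i+(i-j)\ord x$, reducing to the fact that two arithmetic progressions with common differences $j$ and $d-j$ (both at least $2$, not both equal to $2$ since $d>4$) cannot cover all sufficiently negative integers. Your packaging via the envelope $\psi$ and the covering dichotomy is a cleaner, more uniform presentation of the paper's case-by-case bounds $M_1,\dots,M_5$, but the underlying argument is the same.
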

  \begin{proof}
    Let us concentrate solely on Nora being the last player. As $d$ is at least $5$, she must have got at least two chances before her last move. Nora makes sure that both $a_1$ and $a_{d - 1}$ have been chosen at the end of her second turn.\\[-0.2cm]
    
    \noindent Thereaf{t}er, if $a_0$ is the last coef{f}icient to be decided by Nora, she wants
    \begin{equation}\label{E:a0}
      a_0 \neq - x^d \big( a_d + a_{d - 1}x^{-1} + \cdots + a_1 x^{-(d - 1 )} \big)\quad\forall x \in K^*.
    \end{equation}
    Denote $M_1 := \min\,\{\,( \ord a_i - \ord a_d ) / ( d - i ) \mid 0 < i < d,\ a_i \neq 0\,\}$ with $M_1$ def{i}ned to be $+\infty$ if the minimum is to be taken over an empty set. For a non-zero $x$ with $\ord x = n < M_1$, the right side of~\eqref{E:a0} has order equal to $\ord a_d + dn$ where $d > 4$. For all other $x$ in $K^*$, the order of that expression is at least
    \[
      M_2 := \min\,\{\,\ord a_i + i M_1 \mid 0 < i \leq d\,\}.
    \]
    Nora only needs to choose a non-zero element which has its order in
    \[
      \{\, n \mid n \not\equiv \ord a_d\!\pmod{d}\, \}\ \cap\ \{\, n < M_2\, \}.
    \]
    The possibility of $a_d$ being the last unset{t}led coef{f}icient is reduced to that of choosing the constant coef{f}icient by using the trick of reverse polynomial amply explained in Proposi{t}ion~\ref{P:biquad}.\\[-0.2cm]
    
    If Nora has to choose $a_i$ for some $i \notin \{ 0, 1, d - 1, d \}$ at the end, she hopes
    \begin{equation}\label{E:hope}
      a_i \neq -x^{-i} ( a_d x^d + \cdots + a_{i + 1}x^{i + 1} + a_{i - 1}x^{i - 1} + \cdots + a_0 )
    \end{equation}
    for all $x \in K^*$. Let
    \[
      M_3 := \min\,\{\,( \ord a_j - \ord a_d ) / ( d - j ) \mid 0 \leq j < d,\ j \neq i,\ a_j \neq 0 \,\}
    \]
    so that Nora can prevent all such $x$ with $\ord x < M_3$ from being roots by not allowing $\ord a_i$ to belong to $\{ \ord a_d + ( d - i )n \mid n \in \Z \}$. As $d - i$ is at least $2$, such choices are feasible. Next, we consider
    \[
      M_4 := \max\,\{\,\frac{\ord a_0 - \ord a_j}{j} \mid 0 < j < d,\ j \neq i,\ a_j \neq 0\,\}.
    \]
    For $x \in K^*$ with $\ord x > M_4$, the leading term of the right side of~\eqref{E:hope} is dictated by $a_0x^{-i}$ and thereby, has order of the form $\ord a_0 - in$ for some $n > M_4$. We recall that both $i$ and $d - i$ are at least $2$. Moreover, at least one of $d - i$ or $i$ has to be strictly greater than $2$ as $d > 4$. Of the lot that is yet to be accounted for, the order of the right side expression in~\eqref{E:hope} will be bounded from below by
    \[
      M_5 := \min\,\{\,\ord a_j + (j - i)n \mid M_3 \leq n \leq M_4,\ 0 \leq j \leq d,\ j \neq i,\ a_j \neq 0\,\}.
    \]
    This minimum exists and advises Nora to choose an $a_i$ with its order not in
    \[
      \{\,\ord a_d + ( d - i )n \mid n \in \Z\,\}\ \cup\ \{\,\ord a_0 - in \mid n \in \Z\,\}\ \cup\ \{\,n \geq M_5\,\}.
    \]
    If $d - i = i$, both of them have to be at least $3$ and the two arithmet{i}c progress{i}ons $\{\,\ord a_0 - in\,\} \cup \{\,\ord a_d + ( d - i )m\,\}$ will leave out enough many negat{i}ve integers as order opt{i}ons for Nora. If not, one of those two progressions has a greater common di{f}ference than the other and Nora is home.
  \end{proof}
  We are now able to provide a family of examples asked for by~\citeauthor*{GWZ18}. In~\citep{GWZ18}, they wonder about domains $D_1$ and $D_2$ such that the players choose coef{f}icients from $D_1$ while the roots are to be sought (avoided) in $D_2$.
  \begin{defn}[\protect{\cite[Problem~245]{Gou97}}]
    The \emph{maximal unrami{f}ied extension} of $\Q_p$, denoted by $\Q_p^{\text{unr}}$, is the union of all extensions of $\Q_p$ obtained by adjoining the $d$-th roots of unity whenever $d$ is coprime to $p$.
  \end{defn}
  It is an inf{i}nite extension of $\Q_p$ with $\overline{\F_p}^{\text{alg}}$ as its residue f{i}eld while the integer $p$ cont{i}nues to play the role of a uniformiser in $\Q_p^{\text{unr}}$. The map $\ord : \Q_p^{\text{unr}} \rightarrow \R$ still takes values in the subring of integers alone.
  \begin{corollary}\label{C:unr}
    Let $d \in \N^* \setminus \{ 1, 3, 4 \}$ and $\{ p_1, \ldots, p_k \}$ be any given f{i}nite set of rat{i}onal primes. If our players are required to choose the polynomial coef{f}icients from \Q, then Nora playing last can ensure that the rat{i}onal polynomial of degree $d$ thus constructed has no roots in any of the $\Q_{p_j}^{\text{unr}}$ for $j$ ranging from $1$ to $k$.
  \end{corollary}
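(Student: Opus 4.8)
The plan is to run, simultaneously at all $k$ primes, the purely valuation-theoretic strategies that Nora used over a single $\Q_{p_j}$, and to exploit the independence of the $p_j$-adic orders on \Q. Since $d \notin \{ 1, 3, 4 \}$, we are in the situation $d = 2$ (Lemma~\ref{L:quad}) or $d > 4$ (Theorem~\ref{Th:dgeq5}); in both, Nora, playing last, can confine all of her moves to rational numbers, as the remarks following those results record. The structural feature I would lean on is that each $\Q_{p_j}^{\text{unr}}$ carries a discrete valuation $\ord_{p_j}$ whose value group is still \Z, with $p_j$ as a uniformiser, and that a rational number has the same $p_j$-adic order whether it is regarded inside \Q, $\Q_{p_j}$, or $\Q_{p_j}^{\text{unr}}$.

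First I would observe that the non-existence of a root in the proofs of Lemma~\ref{L:quad} and Theorem~\ref{Th:dgeq5} is extracted from a single ultrametric fact: if a sum of field elements vanishes, then the least order among the summands is attained at least twice. In each proof Nora arranges the order of the one coefficient she controls so that, for every candidate $x$, the term she governs has a strictly isolated order among $\{\ord a_i + i\,\ord x\}$, forbidding cancellation. This reasoning uses nothing about $x$ beyond its order, and a putative root $x \in \Q_{p_j}^{\text{unr}}$ can only contribute orders $\ord a_i + i\,\ord x$ with $\ord x \in \Z$, exactly as over $\Q_{p_j}$. Hence the enlargement of the residue field to $\overline{\F_p}^{\text{alg}}$ and the loss of completeness are both immaterial: Nora's single-prime recipe already precludes roots in the larger field $\Q_{p_j}^{\text{unr}}$, not merely in $\Q_{p_j}$.

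Next I would handle all $k$ primes at once. Nora's structural moves---setting $a_1 = 0$, and additionally $a_{d - 1} = 0$ when $d > 4$---are prime-independent, since $0$ has infinite order at every place. Her decisive last move amounts to selecting one rational coefficient whose $p_j$-adic order avoids a forbidden set $F_j \subsetneq \Z$, where $F_j$ is the finite union of arithmetic progressions together with an upper half-line produced by the analysis at $p_j$ (for $d = 2$ it is merely the parity class of $\ord a_0$ modulo $2$). The single-prime proofs show that each allowable set $A_j := \Z \setminus F_j$ is nonempty: it contains all sufficiently negative integers in at least one residue class, because there is a residue modulo $d$ distinct from that of $\ord a_d$ and, in the interior case with $d > 4$, at least one of the two progressions has common difference $\geq 3$. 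Choosing any $n_j \in A_j$ for each $j$ and setting the final coefficient equal to $\prod_{j = 1}^{k} p_j^{n_j}$, a nonzero rational, Nora realises $\ord_{p_j}\big( \prod_{j'} p_{j'}^{n_{j'}} \big) = n_j$ for every $j$, since each $p_{j'}$ with $j' \neq j$ is a unit at $p_j$. Thus her one coefficient lands in every $A_j$ at once, and $f$ has no root in any $\Q_{p_j}^{\text{unr}}$.

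The main obstacle is conceptual rather than computational: one must be certain that passing from $\Q_{p_j}$ to its maximal unramified extension cannot manufacture a root where the valuation already precludes one. This is exactly the robustness isolated above---the only arithmetic datum used is the value group \Z, which is preserved---but it also calls for a small uniformity check. In Theorem~\ref{Th:dgeq5} the last coefficient Nora decides may be $a_0$, $a_d$ (via the reverse-polynomial trick), or an interior $a_i$; in each case $F_j$ has the same shape and $A_j \neq \emptyset$ by the very argument of that proof, applied verbatim with the realised $p_j$-adic orders of the already-chosen coefficients. Since those orders are fixed before Nora's final move, all of the $F_j$ are determined simultaneously, the exponents $n_j$ may be chosen independently, and the product $\prod_{j} p_j^{n_j}$ is automatically nonzero, so the admissibility of the leading and constant coefficients is never violated.
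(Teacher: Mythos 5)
Your proposal is correct and follows essentially the same route as the paper: run the valuation-theoretic strategies of Lemma~\ref{L:quad} and Theorem~\ref{Th:dgeq5} with $\K = \Q_{p_j}^{\text{unr}}$ at each prime simultaneously (noting that only the value group $\Z$ matters, so neither completeness nor the enlarged residue field causes trouble), then realise all the required $p_j$-adic orders with a single rational number. The only cosmetic difference is that you take $\prod_j p_j^{n_j}$ where the paper takes $\sum_j p_j^{-k_j}$; both achieve $\ord_{p_j} = n_j$ for every $j$.
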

  \begin{proof}
    For each $j$, Nora follows the winning strategy suggested to her by the the relevant Lemma~\ref{L:quad} or Theorem~\ref{Th:dgeq5} with $\K = \Q_{p_j}^{\text{unr}}$. At the end, she picks a negat{i}ve power $-k_j$ of $p_j$ allowed there and declares her choice to be $\sum_j p_j^{-k_j}$ which has all the necessary propert{i}es from her perspect{i}ve.
  \end{proof}
  The phenomenon cont{i}nues to work for $d = 4$ when no roots in exactly one $\Q_p^{\text{unr}}$ is being demanded. However, for two dist{i}nct primes $p_1$ and $p_2$, Wanda may conspire so that the $p_1$-adic orders of $a_0$ and $a_4$ in~\eqref{E:par} have the same parity modulo $2$ while their $p_2$-adic orders don't. This might confuse Nora's response.\\[-0.1cm]
  
  Note that for any f{i}nite set $\{ p_j \}$ consist{i}ng of primes and $d > 1$, Eisenstein irreducibility criterion (see~\cite[Proposi{t}ion~5.3.11]{Gou97}) can give us inf{i}nitely many monic polynomials with degree $d$ and integer coef{f}icients which are irreducible over all of those $\Q_{p_j}$'s. The polynomials constructed in our game during the course of Nora's victory need not always belong to the Eisenstein family but might not be irreducible either. Here, we would like to ment{i}on that a lot of ef{f}ort has gone into obtaining efficient algorithms for factorization of polynomials over locally compact f{i}elds. \citeauthor{Chi91} was the f{i}rst one to give a polynomial-t{i}me algorithm in this set{t}ing. We refer to \cite{Chi91,Coh93,CD00} and the references therein for def{i}nit{i}ons and learning more about this subject.\\[-0.1cm]
  
  Funct{i}on f{i}elds help to showcase another family of examples:
  \begin{corollary}\label{C:FF}
    Let $d \in \N^* \setminus \{ 1, 3 \}$ and $D_1 = \F((T)) $ while $D_2$ be a sub-ring of $\overline{F}^{\text{alg}}((T))$. If both Nora and Wanda choose polynomial coef{f}icients in $D_1$ with Nora playing last, she can ensure a victory with no roots in $D_2$.
  \end{corollary}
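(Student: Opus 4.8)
The plan is to run the same argument as in Corollary~\ref{C:unr}, with the single f{i}eld $\overline{\F}^{\text{alg}}((T))$ playing the role there played by the f{i}elds $\Q_{p_j}^{\text{unr}}$. F{i}rst I would note that, as $D_2 \subseteq \overline{\F}^{\text{alg}}((T))$, it suf{f}ices for Nora to force the f{i}nal polynomial to have no root in the larger f{i}eld $K := \overline{\F}^{\text{alg}}((T))$. This $K$ is the f{i}eld of quot{i}ents of the complete discrete valuat{i}on ring $\overline{\F}^{\text{alg}}[[T]]$, whose maximal ideal is generated by the uniformiser $T$ and whose residue f{i}eld is $\overline{\F}^{\text{alg}}$. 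The decisive structural fact is that the extension $K / \F((T))$ is \emph{unramif{i}ed}: enlarging the f{i}eld of constants does not enlarge the value group, so the \ord\ funct{i}on on $K$ takes only integer values and restricts to the usual $T$-adic order on the subf{i}eld $\F((T))$.

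With $K$ f{i}xed in this way, the degrees $d = 2$, $d = 4$ and $d > 4$ are handled by Lemma~\ref{L:quad}, Proposi{t}ion~\ref{P:biquad} and Theorem~\ref{Th:dgeq5}, each of which equips Nora, as the last player, with a winning strategy over $K$. What makes these strategies available in the present variant is that every move they prescribe to Nora depends only on the \emph{orders} of the coef{f}icients already chosen, and requires of her in return only the select{i}on of a coef{f}icient of some suitable order --- subject to avoiding f{i}nitely many arithmet{i}c progress{i}ons of orders together with an upper bound. Since both players are here conf{i}ned to the subf{i}eld $\F((T)) \subseteq K$, the coef{f}icients Wanda contributes automat{i}cally carry integer order, so Nora's order-based responses remain well def{i}ned --- indeed, a strategy designed to defeat an opponent free to play throughout $K$ a fort{i}ori defeats one conf{i}ned to $\F((T))$ --- and Nora can realise each of her own prescribed-order choices already inside $\F((T))$, for instance by taking a suitable power of $T$ (or a short $\F$-linear combinat{i}on of such powers). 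This is exactly the observat{i}on recorded af{t}er the proofs of Lemma~\ref{L:quad} and Proposi{t}ion~\ref{P:biquad}.

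It follows that the polynomial $f \in \F((T))[x]$ produced by Nora under this strategy has no root in $K = \overline{\F}^{\text{alg}}((T))$, hence none in the subring $D_2$, which is what we wanted. I do not ant{i}cipate any genuine di{f}f{i}culty beyond this bookkeeping: the one point to verify with care is that the passage from $\F((T))$ to $\overline{\F}^{\text{alg}}((T))$ leaves the value group unchanged, so that the pari{t}y- and congruence-of-order arguments underlying the three cited results are untouched. As in Corollary~\ref{C:unr}, the excluded degrees $d = 1$ and $d = 3$ lie outside the scope, the former because Wanda wins every linear game (Lemma~\ref{L:lin}) and the lat{t}er because the cubic analysis rests on the residue f{i}eld being f{i}nite, whereas $\overline{\F}^{\text{alg}}$ is inf{i}nite.
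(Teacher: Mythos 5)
Your proposal is correct and follows essentially the same route as the paper: reduce to $D_2 = \overline{\F}^{\text{alg}}((T))$, take this as the field \K\ of Section~\ref{S:ult}, and invoke Lemma~\ref{L:quad}, Proposi{t}ion~\ref{P:biquad} and Theorem~\ref{Th:dgeq5}, with Nora realising her prescribed-order choices inside $\F((T))$ via suitable powers of $T$. Your explicit justification that the strategies are purely order-based and that the value group does not grow under the constant-field extension is exactly the bookkeeping the paper leaves implicit.
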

  \begin{proof}
    It is suf{f}icient to study the situat{i}on when $D_2$ is the whole of $\overline{F}^{\text{alg}}((T))$. Take $\K = D_2$. Nora follows the winning strategies prescribed in Lemma~\ref{L:quad}, Proposi{t}ion~\ref{P:biquad} or Theorem~\ref{Th:dgeq5} always taking care to choose her Laurent series in $\F((T))$ with correct orders in the transcendental variable $T$.
  \end{proof}
  
  \subsection{Cubic polynomials}\label{SSec:cub}
    We will f{i}nish this sect{i}on with a detailed discussion on polynomials with degree equal to $3$. For us, this was perhaps the hardest to understand. It is partly because the quest{i}on of having roots becomes one with the quest{i}on of reducibility for cubic polynomials.\\[-0.2cm]
    
    Let $f$ be a polynomial of degree $d$ with coef{f}icients in $K$ and a non-zero constant term. The \emph{Newton polygon $\mathcal{N}_f$} of $f$ is def{i}ned to be the lower boundary of the convex hull of the following collect{i}on of points:
    \begin{equation}
      \{ ( k, \ord a_k ) \mid a_k \neq 0 \} \}.
    \end{equation} It is, thereby, a cont{i}nuous and piecewise linear map from the closed interval $[0, d] \rightarrow \R$ with di{f}ferentiability breaking down only at some integer points.
    \begin{lemma}[\protect{cf.~\cite[Theorem~6.4.7]{Gou97}}]\label{L:slo}
      If $f(x) = 0$ for some $x \in \overline{\K}^{\text{alg}}$, then
      \[
        \ord x = - \mathcal{N}'_f (t)\ \text{for some}\ t \in [0, d] \setminus \Z.
      \]
    \end{lemma}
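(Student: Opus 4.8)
The plan is to read the order of a root off the orders of the individual monomials of $f$, using only the ultrametric property of the norm on $\overline{\K}^{\text{alg}}$. Write $f(x) = \sum_{k=0}^{d} a_k x^k$ with $a_0 \neq 0$, and suppose $\alpha \in \overline{\K}^{\text{alg}}$ satisfies $f(\alpha) = 0$; note $\alpha \neq 0$ because the constant term is non-zero. Put $\lambda := \ord \alpha$. For each index $k$ with $a_k \neq 0$ the term $a_k \alpha^k$ has order $\ord a_k + k\lambda$, and I would set $m := \min \{\, \ord a_k + k\lambda \mid a_k \neq 0 \,\}$.

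The first substantive step is to show that this minimum $m$ is attained at two or more indices. The extended order function satisfies $\ord(\beta + \gamma) \geq \min(\ord\beta, \ord\gamma)$, with equality whenever the two orders differ. Were the minimum realised by a single index, the whole sum $\sum_k a_k \alpha^k$ would inherit the finite order $m$, contradicting the fact that $f(\alpha) = 0$ has order $+\infty$. Hence there are indices $k_1 < k_2$ with $\ord a_{k_1} + k_1\lambda = \ord a_{k_2} + k_2\lambda = m$, which rearranges to
\[
  -\lambda = \frac{\ord a_{k_2} - \ord a_{k_1}}{k_2 - k_1}.
\]
Thus the line of slope $-\lambda$ through $(k_1, \ord a_{k_1})$ and $(k_2, \ord a_{k_2})$ passes through two of the points $(k, \ord a_k)$ defining $\mathcal{N}_f$.

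Next I would upgrade this to the assertion that $-\lambda$ is honestly one of the slopes of the Newton polygon. The inequalities $\ord a_k + k\lambda \geq m$, valid for every $k$, say exactly that each point $(k, \ord a_k)$ lies on or above the line $y = m - \lambda x$ of slope $-\lambda$, a line that meets the point set at both $(k_1, \ord a_{k_1})$ and $(k_2, \ord a_{k_2})$. Therefore this line supports the lower convex hull from below, and the edge of $\mathcal{N}_f$ it determines — the segment joining the leftmost and rightmost contact points — has horizontal extent at least $k_2 - k_1 \geq 1$. Picking any $t$ in the open interior of that edge, which necessarily contains points of $[0,d] \setminus \Z$, gives $\mathcal{N}'_f(t) = -\lambda$, that is $\ord \alpha = -\mathcal{N}'_f(t)$, as required.

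The one genuinely delicate point is the passage from ``a supporting line of slope $-\lambda$ meets the defining points in at least two places'' to ``$-\lambda$ is the slope of a bona fide edge of the lower convex hull,'' together with the observation that this edge has positive horizontal length so that a non-integer $t$ with the prescribed derivative exists. This is the standard characterisation of the edges of a lower convex hull as precisely those supporting lines touching at least two of the vertices; once it is invoked, everything else is bookkeeping with orders.
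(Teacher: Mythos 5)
Your argument is correct and complete: the paper itself gives no proof of this lemma (it is quoted from Gouv\^ea, Theorem~6.4.7), and your derivation --- the ultrametric inequality forces the minimum of $\ord a_k + k\,\ord \alpha$ over indices with $a_k \neq 0$ to be attained at two distinct indices, so the supporting line of slope $-\ord\alpha$ meets the defining point set in a contact segment of horizontal width at least one, which contains non-integer $t$ at which $\mathcal{N}_f$ is differentiable with $\mathcal{N}'_f(t) = -\ord\alpha$ --- is the standard proof of the cited result. In particular you correctly dispose of the two delicate points, namely that $\alpha \neq 0$ because $a_0 \neq 0$, and that a supporting line touching at least two of the points $(k,\ord a_k)$ yields a genuine edge of the lower convex hull of positive length, so a suitable $t \in [0,d]\setminus\Z$ exists.
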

    \noindent It is also known that the Newton polygon of any irreducible polynomial is a line segment over the closed interval $[ 0, d ]$. We refer to~\citep[Chapter~6, \S\,3]{Cas86} for a proof.
    \begin{prop}\label{P:cub}
      Let $d = 3$ and $K$ be such that its residue f{i}eld $\O / \mathfrak{p}$ is f{i}nite. Then, the last player has a winning strategy.
    \end{prop}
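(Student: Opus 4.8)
The plan is to dispose of the situation in which Wanda moves last by appealing to Lemma~\ref{L:GWZ}, so we may assume throughout that Nora makes the fourth and final move. As a cubic has four coefficients, this forces Nora to be Player~\Rmnum{2} and Wanda to be Player~\Rmnum{1}; moreover the reverse-polynomial trick $f(x)=x^3g(1/x)$ from Proposition~\ref{P:biquad} lets us freely interchange $a_0$ and $a_3$. Nora's sole objective will be to force $\mathcal{N}_f$ to be the single segment joining $(0,\ord a_0)$ to $(3,\ord a_3)$. Indeed, for a cubic any other shape of $\mathcal{N}_f$ must contain a segment of horizontal length one, since the horizontal lengths are positive integers summing to $3$; and by the factorisation of $f$ over the complete field $K$ dictated by the slopes of $\mathcal{N}_f$ (cf.\ the discussion around Lemma~\ref{L:slo}), a length-one segment yields a linear factor, hence a root in $K$. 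Thus a length-one segment is fatal for Nora, and the single segment is her only admissible target.

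I would first treat the easy case, where Wanda opens with one of the middle coefficients, say $a_1$. Nora immediately sets the other middle coefficient $a_2=0$. Both middle terms are now fixed, so Wanda must spend her third move on an endpoint, leaving the other endpoint for Nora's final move. Whatever finite orders $v_0$ (the set endpoint) and $\ord a_1$ happen to be, Nora picks the remaining endpoint with an order that is simultaneously very negative and lies in a prescribed class modulo~$3$. A sufficiently negative order pushes the Newton line far below the only interior point $(1,\ord a_1)$ (the term $a_2$ being absent), so $\mathcal{N}_f$ is the single segment, while the residue class guarantees $3\nmid(\ord a_3-\ord a_0)$. The slope is then non-integral, every root in $\overline{\K}^{\text{alg}}$ has non-integral order, and none can lie in $K$.

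The substantial case is when Wanda opens with an endpoint, which by symmetry we take to be $a_0$; now Nora is forced to make her last move on a middle coefficient, so the very-negative-endpoint manoeuvre is unavailable. Nora responds with $a_1=0$. If Wanda's third move is the middle $a_2$, then the endpoint $a_3$ is left for Nora, who finishes exactly as above by taking $\ord a_3$ very negative and in the correct class modulo~$3$. If instead Wanda plays the other endpoint $a_3$, both orders $v_0,v_3$ are fixed and Nora must win through $a_2$ alone: when $3\nmid(v_3-v_0)$ she simply takes $a_2=0$, obtaining the single segment of non-integral slope, and is done. The genuinely delicate situation is $3\mid(v_3-v_0)$, so that the single segment has integral slope $s=(v_3-v_0)/3$ and order considerations no longer suffice.

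Here I would invoke the finiteness of the residue field $k:=\O/\mathfrak{p}$, of cardinality $q$. Nora places $(2,\ord a_2)$ exactly on the Newton line by setting $\ord a_2=v_0+2s$, so that $(0,v_0),(2,v_0+2s),(3,v_0+3s)$ are collinear and $\mathcal{N}_f$ is the single segment. Any root of $f$ in $K$ then has order $-s$ and reduces to a root in $k$ of the residual polynomial, which, as $a_1=0$, takes the form $\overline{A_3}\,y^3+\overline{A_2}\,y^2+\overline{A_0}$ with $\overline{A_0},\overline{A_3}\in k^{\ast}$ fixed by Wanda and $\overline{A_2}\in k$ selectable by Nora via the unit part of $a_2$ (the value $\overline{A_2}=0$ being realised by instead raising $\ord a_2$). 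She needs only this cubic to be rootless over $k$, i.e.\ she must keep $\overline{A_2}$ out of the forbidden set $\{-(\overline{A_3}\,y^3+\overline{A_0})y^{-2}\mid y\in k^{\ast}\}$, which has at most $q-1$ elements; since she has $q$ values available, an admissible $\overline{A_2}$ exists and she wins. This counting is the main obstacle and the one point where finiteness is indispensable: over an infinite residue field the map $y\mapsto-(\overline{A_3}\,y^3+\overline{A_0})y^{-2}$ may be surjective, leaving Nora no safe choice, which is exactly why a separate argument (available when $\operatorname{char}(K)=3$) is needed in that regime.
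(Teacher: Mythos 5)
Your proof is correct and follows essentially the same route as the paper's: reduce to a single-segment Newton polygon via the reverse-polynomial symmetry and a zeroed middle coefficient, dispose of non-integral slopes by order considerations, and in the integral-slope case use finiteness of $\O/\mathfrak{p}$ to pick the last middle coefficient outside a forbidden set of at most $\abs{\O/\mathfrak{p}}-1$ residues. Your version is the mirror image of the paper's (you zero $a_1$ and play $a_2$, phrasing the obstruction via the residual polynomial $\overline{A_3}y^3+\overline{A_2}y^2+\overline{A_0}$ rather than the explicit factorization $(x+c)(x^2+ax+b)$), but the counting over $(\O/\mathfrak{p})^*$ is the same argument.
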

    \begin{proof}
      The only case to be dealt with is when Nora is the last player. If  Wanda as Player\,\Rmnum{1} tries to choose either of $a_1$ or $a_2$ on her f{i}rst move, Nora can def{i}nitely manage to have one of $a_0$ or $a_3$ for herself to choose last. We have seen earlier in~\eqref{E:biquad} and \eqref{E:a0} how this favours Nora to be the winner of the game.\\[-0.2cm]
      
      Therefore, assume that Wanda chooses $a_3$ to be some non-zero element f{i}rst. Nora immediately lets $a_2 = 0$. Wanda will again prefer to pick $a_0$ on her second move. This choice should also be such that $a_0 / a_3$ is a perfect cube in $\K^*$ or else, Nora can take $a_1 = 0$ and win. In part{i}cular, we have that $\ord a_3 \equiv \ord a_0\pmod{3}$. It is plain that the two extreme vert{i}ces of $\mathcal{N}_f$ are $(0, \ord a_0 )$ and $( 3, \ord a_3 )$. Nora is forced to choose $a_1$ with order at least
      \[
        \ord a_0 + \frac{\ord a_3 - \ord a_0}{3}
      \]
      because~\citep[Chapter~6, \S\,3]{Cas86} ment{i}oned above. If this happens, the Newton polygon has a constant slope direct{i}ng that each of the roots of $f$ will have order exactly equal to $( \ord a_0 - \ord a_3 ) / 3$ by Lemma~\ref{L:slo}. Suppose that the associated monic polynomial factorizes as
      \[
        x^3 + \frac{a_1}{a_3}x + \frac{a_0}{a_3} = ( x + c ) ( x^2 + ax + b )
      \]
      for some $a \in K,\ b$ and $c$ in $K^*$. On comparing the coef{f}icients, one gets that $a + c = 0$ and $b = d/c$ where $d := a_0 / a_3$ which means
      \[
        \frac{a_1}{a_3} = \frac{d - c^3}{c}.
      \]
      We have $\ord d = 3\cdot\ord c = \ord a_0 - \ord a_3$ from Lemma~\ref{L:slo}. The conclusion is that the most signif{i}cant term of $a_1/a_3$, on expanding as a Laurent series in $\rho$, should be of the form $\rho^{\ord c - \ord d}( d - c^3 ) /c\pmod{\mathfrak{p}}$ if $f$ were to factorize over \K. Recall that $\rho$ is a uniformiser in \O.\\[-0.2cm]
      
      As $d = a_0/a_3$ has been assumed to be a cube in $K$, we also know that $d_0 := \rho^{-\ord d} d\pmod{\mathfrak{p}}$ is a non-zero cube in the f{i}nite f{i}eld $\O / \mathfrak{p}$. Once such a cube $d_0 \in ( \O / \mathfrak{p} )^*$ has been f{i}xed, we have at most $\abs{\O/\mathfrak{p}} - 1$ elements in the set
      \[
        \{\,( d_0 - c_0^3 ) / c_0 \mid c_0 \in ( \O / \mathfrak{p} )^*\,\}
      \]
      at least one of which equals zero. Thus, it will miss some non-zero element $\widetilde{a}_1 \in \O/\mathfrak{p}$ (say). Nora chooses $a_1$ to be such that $a_1 / a_3$ has order equal to $\ord d - \ord c = (2/3)\cdot\ord d$ and
      \[
        \rho^{\ord c - \ord d} ( a_1 / a_3 ) \equiv \widetilde{a}_1\pmod{\mathfrak{p}}.
      \]
      This ensures that the polynomial $f$ cannot factorize over \K.\\[-0.2cm]
      
      If Wanda had begun with $a_0$ instead, Nora will try to win for the reverse polynomial so that the roles of $a_0$ and $a_3$ ($a_1$ and $a_2$) are interchanged.
    \end{proof}
    Not{i}ce that the above game could have been played and won by either of the last players with rat{i}onal coef{f}cients of the polynomial $f$ when looking for $p$-adic numbers as roots.
    \begin{corollary}\label{C:cub}
      Let $p$ be a prime, $d = 3$ and the players be required to choose coef{f}cients from \Q. Then, Nora playing last can ensure that the rat{i}onal polynomial obtained in the end does not have a root in any ring $D_2 \subset \Q_p$.
    \end{corollary}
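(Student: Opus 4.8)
The plan is to specialise Proposi{t}ion~\ref{P:cub} to the complete discrete valuat{i}on ring $\O = \Z_p$, whose field of quot{i}ents is $\K = \Q_p$ and whose residue f{i}eld $\Z_p / p\Z_p \cong \F_p$ is f{i}nite. The hypotheses of that proposi{t}ion are therefore met, so Nora as the last player has a strategy that produces a cubic $f$ having no root in $\Q_p$; here the uniformiser is $\rho = p$ and \ord\ is the usual $p$-adic valuat{i}on. The entire substant{i}ve argument --- the Newton-polygon analysis of \S\,\ref{SSec:cub} that forces $f$ to be irreducible over $\Q_p$ --- has already been carried out there, so little remains.

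The one point requiring care is that Nora must conduct this strategy while picking all of her coef{f}icients from $D_1 = \Q$. Inspect{i}ng the proof of Proposi{t}ion~\ref{P:cub}, her responses are $a_2 = 0$ af{t}er Wanda opens with $a_3$, the fallback $a_1 = 0$ when $a_0 / a_3$ fails to be a cube, and otherwise a carefully chosen $a_1$ with prescribed order $\ord a_1 = \ord a_3 + (2/3)\,\ord d$, where $d = a_0/a_3$ and $\ord d \equiv 0 \pmod 3$ (so the exponent is an integer), together with a prescribed leading residue $\widetilde{a}_1 \in ( \O / \mathfrak{p} )^*$ for $\rho^{\ord c - \ord d} ( a_1 / a_3 )$. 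Since \Q\ is dense in $\Q_p$ and one can select a rat{i}onal number with any prescribed $p$-adic valuat{i}on and any prescribed leading digit --- concretely, the integers coprime to $p$ already surject onto $\F_p^*$ --- such an $a_1 \in \Q$ always exists. This is exactly the observat{i}on recorded in the remark immediately following Proposi{t}ion~\ref{P:cub}. The symmetric situat{i}on in which Wanda opens with $a_0$ and Nora passes to the reverse polynomial is handled ident{i}cally within \Q.

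F{i}nally, the passage from $\Q_p$ to an arbitrary subring $D_2 \subset \Q_p$ is immediate: as $D_2 \subseteq \Q_p$, any root of $f$ lying in $D_2$ would in part{i}cular lie in $\Q_p$, so the absence of a root in $\Q_p$ forces the absence of a root in $D_2$. Hence the same rat{i}onal polynomial witnesses Nora's victory for every such $D_2$ at once. I do not anticipate a genuine obstacle; the only delicate step is the rat{i}onality of Nora's f{i}nal move, and that reduces cleanly to the density of \Q\ in $\Q_p$ together with free control over the valuat{i}on and leading term of a rat{i}onal number.
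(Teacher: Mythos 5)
Your proposal is correct and follows exactly the paper's route: specialise Proposition~\ref{P:cub} to $\O = \Z_p$, observe that Nora only needs to control the $p$-adic valuation and leading residue of her choices (which rational numbers can realise), and reduce an arbitrary $D_2 \subset \Q_p$ to $D_2 = \Q_p$. You have merely spelled out the details that the paper dismisses with ``this is easy enough for her.''
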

    \begin{proof}
      As in the proof of Corollary~\ref{C:FF}, we may regard $D_2 = \Q_p$ without any loss of generality. Nora follows the strategy outlined in Proposi{t}ion~\ref{P:cub} above while simultaneously ensuring that the elements chosen are all rat{i}onal numbers of appropriate $p$-adic orders. This is easy enough for her.
    \end{proof}
    It is plausible that our strategy for Proposi{t}ion~\ref{P:cub} may also work when the residue f{i}eld $\O / \mathfrak{p}$ is isomorphic to $\mathbb{Q}$. We must point out that in order to win the game over \K, Nora is required to f{i}nd an irreducible cubic polynomial having shape $y^3 + by - d_0$ with coef{f}icients in $\O/\mathfrak{p}$ and a non-zero cube $d_0$ given to her.\\[-0.1cm]
    
    The game t{i}lts in Wanda's favour if the polynomial coef{f}icients are to be chosen from the ring of integers. The reader is reminded that for a polynomial to have a root in \K, or equivalently, a linear expression as its factor, it is suf{f}icient that its Newton polygon have some slope of length one. This is because all the roots of any irreducible factor of $f$ are of the same order.
    \begin{prop}
      Let $d = 3$ and the polynomial coe{f}ficients be chosen from some complete discrete valuat{i}on ring \O. Then, Wanda can always ensure that it has roots in the f{i}eld of fract{i}ons \K.
    \end{prop}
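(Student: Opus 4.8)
The plan is to separate the trivial parity from the real difficulty. A cubic has four coefficients, so the second player always moves last; if Wanda is Player\,\Rmnum{2} she wins by Lemma~\ref{L:GWZ}, and the entire substance is to show that \emph{Wanda moving first} can force a root even though Nora now plays last. As noted just before the statement, it is enough for Wanda to arrange that the Newton polygon $\mathcal{N}_f$ carries a slope of horizontal length one: such a segment isolates a single root-order, and since \K\ is complete the valuation extends uniquely to $\overline{\K}^{\text{alg}}$, forcing all roots of any irreducible factor to share one order; a length-one slope must therefore come from a linear factor and yields a root in \K\ (Lemma~\ref{L:slo}). The decisive new feature, absent in the quotient-field game, is that the coefficients lie in \O, whence $\ord a_i \geq 0$ for all $i$; this lets Wanda plant an interior coefficient of order $0$ realising the \emph{global minimum} of the Newton data, a minimum Nora can never undercut.

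Concretely, I would let Wanda open with $a_0 = \rho$, so $\ord a_0 = 1$, and then react on her second move. If $a_1$ is still unclaimed she sets $a_1 = 1$; the point $(1,0)$ then sits at the global minimum height while its only left neighbour $(0,1)$ is strictly higher, so the segment $[0,1]$ of $\mathcal{N}_f$ has length one. If Nora has pre-empted $a_1$, then either Nora chose it to be a unit---in which case she has herself produced the vertex $(1,0)$ and Wanda merely finishes legally (say $a_3 = 1$)---or $\ord a_1 \geq 1$, in which case Wanda sets $a_2 = 1$, placing $(2,0)$ at the global minimum with both $(0,1)$ and $(1,\ord a_1)$ strictly above it, so that the segment $[2,3]$ has length one. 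Thus in every line of play Wanda manufactures a genuine interior vertex of $\mathcal{N}_f$ at $x = 1$ or $x = 2$.

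The crux, and the step I expect to demand the most care, is verifying that Nora's \emph{last} coefficient cannot dissolve this vertex---precisely the asymmetry that makes the statement interesting. Here the non-negativity of the orders settles everything. In the branch $\ord a_0 = 1,\ \ord a_1 = 0$, the segment reaching $(1,0)$ has slope $-1$, while every not-yet-placed point $(2,\ord a_2)$ and $(3,\ord a_3)$ lies at height $\geq 0$ because $a_2, a_3 \in \O$; hence the slope leaving $(1,0)$ is non-negative, and the strict jump in slope pins $(1,0)$ as a vertex whatever value Nora assigns to her remaining coefficient. The branch $\ord a_0 = 1,\ \ord a_1 \geq 1,\ \ord a_2 = 0$ is handled by the same slope comparison: the slope into $(2,0)$ is negative since its left neighbours sit at height $\geq 1$, while the slope out to $(3,\ord a_3)$ equals $\ord a_3 \geq 0$, so $(2,0)$ is a vertex and $[2,3]$ has length one irrespective of $a_3$. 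With a length-one slope secured in every case, the Newton-polygon principle recalled above produces the root in \K, and (invoking the reverse polynomial $f(x) \mapsto x^3 f(1/x)$ should Wanda ever prefer to open at the leading end) the proof is complete.
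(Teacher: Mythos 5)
Your proposal is correct and follows essentially the same route as the paper: Wanda opens with $a_0 = \rho$, then plants $a_1 = 1$ if Nora has not claimed $a_1$ or has made it a unit, and otherwise plants $a_2 = 1$, so that the non-negativity of orders in \O\ forces a length-one segment in the Newton polygon and hence a linear factor over \K. Your write-up merely makes explicit the vertex/slope verifications that the paper leaves to the reader.
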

    \begin{proof}
      She can always win by Lemma~\ref{L:GWZ} if she is the last player. Else, Wanda is Player\,\Rmnum{1}. She begins with declaring $a_0 = \rho$. If Nora does not choose $a_1$ to have a posi{t}ive order next, Wanda can ask for $a_1$ to be $1$ on her second move and ensure a slope of length one in the Newton polygon. This is also true if Nora picks $a_1$ to be a unit in \O.\\[-0.2cm]
      
      If Nora makes sure to have $a_1$ with $\ord a_1 > 0$, Wanda lets $a_2 = 1$. Irrespect{i}ve of Nora's subsequent closing move, there will be a slope of length one in the Newton polygon associated with the polynomial.
    \end{proof}
   An object{i}on may be raised with regards to Nora's capability to check for $a_0/a_3$ being a cube in $K$. When $\operatorname{char}\,( \O/\mathfrak{p} )$ is not $3$, Nora can use the statement below (see for example~\citep[Chapter~10]{AM69}) to reduce this quest{i}on to checking cubicity in the residue f{i}eld.
    \begin{lemma}[Hensel's lemma]
      If $f \in \O[X]$ and $\alpha_0 \in \O$ is such that $f(\alpha_0) \equiv 0$ modulo $f'(\alpha_0)^2\mathfrak{p}$, then there exists an $\alpha \in K$ with $f(\alpha) = 0$ and $\alpha \equiv \alpha_0$ modulo $f'(\alpha_0)\mathfrak{p}$. Such an $\alpha$ is also unique provided $\alpha_0 \neq 0$.
    \end{lemma}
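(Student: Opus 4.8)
The plan is to prove this sharp (Newton-type) form of Hensel's lemma by successive approximation, lifting the given approximate root $\alpha_0$ to an exact one. First I would record the \emph{integral} Taylor expansion: for $f = \sum_i a_i X^i \in \O[X]$ one has
\[
  f(X + Y) = \sum_{k = 0}^{d} f_k(X)\, Y^k, \qquad f_0 = f,\ f_1 = f',
\]
where $f_k(X) = \sum_i \binom{i}{k} a_i X^{i - k}$ visibly lies in $\O[X]$ because the binomial coefficients are integers and each $a_i \in \O$. With $\alpha_0$ as given, I define the Newton iteration
\[
  \alpha_{n + 1} = \alpha_n + h_n, \qquad h_n := -\,\frac{f(\alpha_n)}{f'(\alpha_n)},
\]
and the defining cancellation $f'(\alpha_n) h_n = -f(\alpha_n)$ removes the $k = 0$ and $k = 1$ terms of the expansion at $X = \alpha_n,\ Y = h_n$, leaving $f(\alpha_{n + 1}) = \sum_{k \geq 2} f_k(\alpha_n)\, h_n^{\,k}$, so that $\ord f(\alpha_{n + 1}) \geq 2\,\ord h_n$.

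The heart of the argument, and the step I expect to demand the most care, is the simultaneous bookkeeping of the two orders that forces quadratic convergence. Writing $e := \ord f'(\alpha_0)$ and $\gamma_n := \ord f(\alpha_n) - 2e$, the hypothesis $f(\alpha_0) \in f'(\alpha_0)^2 \mathfrak{p}$ reads exactly $\gamma_0 \geq 1$. I would prove by induction that $\ord f'(\alpha_n) = e$ for all $n$ and that $\gamma_{n + 1} \geq 2\gamma_n$. Indeed, assuming $\ord f'(\alpha_n) = e$ and $\gamma_n \geq 1$ gives $\ord h_n = \ord f(\alpha_n) - e = \gamma_n + e > e$; applying the same expansion to $f'$ shows $f'(\alpha_{n + 1}) - f'(\alpha_n) \in h_n\O$ has order exceeding $e$, whence $\ord f'(\alpha_{n + 1}) = e$ by the ultrametric inequality. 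Combining this with $\ord f(\alpha_{n + 1}) \geq 2\,\ord h_n = 2(\gamma_n + e)$ yields $\gamma_{n + 1} \geq 2\gamma_n$. Therefore $\gamma_n \geq 2^n \gamma_0 \to \infty$, so both $\ord f(\alpha_n) \to \infty$ and $\ord h_n = \gamma_n + e \to \infty$.

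Completeness of \O\ then finishes existence. Since $\ord(\alpha_{n + 1} - \alpha_n) = \ord h_n \to \infty$, the sequence $(\alpha_n)$ is Cauchy and converges to some $\alpha \in \O$, and continuity of the polynomial map gives $f(\alpha) = \lim_n f(\alpha_n) = 0$. The ultrametric bound $\ord(\alpha - \alpha_0) \geq \min_n \ord h_n = \ord h_0 = \gamma_0 + e \geq e + 1$ is precisely the congruence $\alpha \equiv \alpha_0$ modulo $f'(\alpha_0)\mathfrak{p}$, as required.

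For uniqueness, I would take a second root $\beta$ in the same residue disc, i.e.\ $\ord(\beta - \alpha_0) \geq e + 1$, and use the integral Taylor expansion once more to factor
\[
  f(\alpha) - f(\beta) = (\alpha - \beta)\bigl(f'(\beta) + (\alpha - \beta) R\bigr)
\]
for some $R \in \O$. Now $\beta \equiv \alpha_0$ modulo $\mathfrak{p}^{e + 1}$ forces $\ord f'(\beta) = e$, while $\ord(\alpha - \beta) \geq e + 1 > e$; were $\alpha \neq \beta$, dividing by $\alpha - \beta$ would give $0 = f'(\beta) + (\alpha - \beta) R$ and hence $e = \ord f'(\beta) \geq \ord(\alpha - \beta) > e$, a contradiction. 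Thus $\alpha = \beta$, the nondegeneracy hypothesis $\alpha_0 \neq 0$ serving to keep the orders in play finite so that this disc is the genuine one about $\alpha_0$.
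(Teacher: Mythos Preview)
The paper does not prove this lemma at all; it simply records it as a known fact with a citation to Atiyah--Macdonald, Chapter~10. Your Newton-iteration argument is the classical proof and is essentially correct: the integral Taylor expansion, the simultaneous induction showing $\ord f'(\alpha_n) = e$ and $\gamma_{n+1} \geq 2\gamma_n$, and the Cauchy/completeness conclusion all go through as written.

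Two small points worth tightening. First, your existence argument silently assumes $f'(\alpha_0) \neq 0$ so that $e < \infty$ and the iterate $h_0 = -f(\alpha_0)/f'(\alpha_0)$ is defined; the degenerate case $f'(\alpha_0) = 0$ collapses the hypothesis to $f(\alpha_0) = 0$ and is handled trivially by $\alpha = \alpha_0$. Second, your closing sentence on uniqueness is vague: the condition that actually powers the contradiction $e \geq \ord(\alpha - \beta) > e$ is $e < \infty$, i.e.\ $f'(\alpha_0) \neq 0$, not $\alpha_0 \neq 0$. The paper's phrasing here is idiosyncratic---in their intended application to $X^3 - c$ with $\operatorname{char}(\O/\mathfrak{p}) \neq 3$, the two conditions happen to coincide---but your uniqueness argument stands on its own once you state the correct nondegeneracy hypothesis.
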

    Either player can apply this to the polynomial $X^3 - ( \rho^{-\ord ( a_0 / a_3 )} a_0 / a_3 )$. As long as $\operatorname{char} ( \O/\mathfrak{p} ) \neq 3$, this \O-polynomial has a root in \O\ if{f} $3$ divides $\ord ( a_0 / a_3 )$ and $\rho^{-\ord ( a_0 / a_3 )} a_0 / a_3$ is a cube in $\O/\mathfrak{p}$. The issue of availability of $q$-th roots in $p$-adic f{i}elds has been considered more elaborately than here in~\cite{MS13}. Before ending this sect{i}on, a di{f}ferent proof is presented in characterist{i}c $3$ where the analysis is simpler.
    \begin{lemma}\label{L:char3}
      Let $d = 3$ and \K\ have characterist{i}c $3$. Then, the last player is able to win.
    \end{lemma}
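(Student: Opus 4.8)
The plan is to mirror the skeleton of Proposition~\ref{P:cub} and change only its endgame, the one place where finiteness of $\O/\mathfrak{p}$ was invoked. As always, if Wanda moves last she wins by Lemma~\ref{L:GWZ}, so I take Nora to be the last player and Wanda to be Player~\Rmnum{1}. If Wanda's opening move is $a_1$ or $a_2$, Nora zeroes out the other middle coefficient and arranges to choose one of $a_0$ or $a_3$ on her final turn; picking its order incongruent to that of the opposite extreme coefficient modulo $3$ makes $\mathcal{N}_f$ a single segment of non-integral slope, so $f$ has no root by Lemma~\ref{L:slo}, exactly as in~\eqref{E:biquad} and~\eqref{E:a0}. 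The case where Wanda opens with $a_0$ is symmetric under $f(x) \mapsto x^3 f(1/x)$. This reduces me to the principal line of play: Wanda picks $a_3$, Nora replies $a_2 = 0$, and Wanda then picks $a_0$ (were she to pick $a_1$ instead, Nora would take $a_0$ last and win as above). Writing $\delta := a_0/a_3$, if $\delta$ is not a cube in \K\ then Nora plays $a_1 = 0$: the monic polynomial $x^3 + \delta$ has a root if{f} $-\delta = (-1)^3\delta$ is a cube if{f} $\delta$ is, so Nora wins. Hence Wanda is forced to hand Nora a cube $\delta = \gamma^3$ with $\gamma \in \K^*$; put $m := \ord \gamma$, so that $\ord \delta = 3m$.

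Here comes the characteristic-$3$ substitute for the residue-field count. Nora chooses $a_1$ with $\ord a_1 = \ord a_3 + 2m + 1$; setting $\beta := a_1/a_3$ gives $\ord \beta = 2m + 1$, and the monic polynomial is $g(x) = x^3 + \beta x + \gamma^3$. Since $\operatorname{char} \K = 3$ makes the Frobenius additive, the substitution $x = y - \gamma$ uses $(y - \gamma)^3 = y^3 - \gamma^3$ to yield
\[
  g(x) = y^3 + \beta y - \beta\gamma .
\]
A root of $g$ in \K\ would furnish $y \in \K$ with $\ord(y^3 + \beta y) = \ord(\beta\gamma) = 3m + 1$. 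But for $t := \ord y$ the two summands have orders $3t$ and $(2m + 1) + t$, whose difference $2t - (2m + 1)$ is odd and therefore never zero; so no cancellation can occur and $\ord(y^3 + \beta y) = \min\{\,3t,\ 2m + 1 + t\,\}$. The quantity $3t$ is divisible by $3$, while $2m + 1 + t$ equals $3m + 1$ only when $t = m$, in which case $\min\{\,3m,\ 3m + 1\,\} = 3m \neq 3m + 1$. Thus $\ord(y^3 + \beta y)$ can never equal $3m + 1$, no such $y$ exists, and $g$ — equivalently $f$ — has no root in \K. This settles the claim.

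The heart of the matter, and where characteristic $3$ is indispensable, is the middle paragraph. In Proposition~\ref{P:cub} Wanda forces the constant term to be a cube precisely so that $\mathcal{N}_f$ becomes a single segment of \emph{integral} slope, and for a finite residue field one then counts the values $(d_0 - c_0^3)/c_0$ to find an admissible $a_1$. That count collapses when $\O/\mathfrak{p}$ is infinite; indeed over $\overline{\F_3}^{\text{alg}}((T))$ every cubic over the residue field already has a root there. The identity $(y - \gamma)^3 = y^3 - \gamma^3$ rescues Nora: after absorbing the forced cube via $x = y - \gamma$, the whole obstruction is pushed onto the single off-diagonal term $\beta y$, whose order $2m + 1$ is deliberately chosen coprime to $3$ and of the opposite parity to all the orders $3t$ of $y^3$, which is what rules out cancellation. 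Equivalently, the reduced cubic degenerates to the perfect cube $(x' + \overline{\gamma})^3$ whose triple root fails to lift because the next term lives at a slope with denominator coprime to $3$. The only routine point to verify is that $\ord a_1 = \ord a_3 + 2m + 1 > \ord a_3 + 2m$ keeps $\mathcal{N}_g$ a single segment of slope $-m$, which is immediate.
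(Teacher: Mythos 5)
Your proof is correct and takes essentially the same route as the paper's: force Wanda to hand over a cube $a_0/a_3=\gamma^3$, invoke the characteristic-$3$ identity $(x+\gamma)^3=x^3+\gamma^3$, and then block every root by a valuation constraint on $a_1$ --- your choice $\ord a_1=\ord a_3+2m+1$ (with $m=\ord\gamma$) is precisely an instance of the paper's prescription ($\ord a_1>\ord a_3+2m$ and $\ord a_1\not\equiv\ord a_3-m\pmod{3}$), verified via the substitution $y=x+\gamma$ and an odd-difference-of-orders argument rather than the paper's three-case analysis on $\ord x$. One gloss to correct: in the reduction, when Wanda has already placed a nonzero $a_1$ and Nora takes $a_0$ (or $a_3$) last, choosing its order merely incongruent modulo $3$ does \emph{not} make $\mathcal{N}_f$ a single segment (the point $(1,\ord a_1)$ may protrude below, creating a length-one slope and hence a root in \K); the argument that actually works is the one you cite from~\eqref{E:biquad} and~\eqref{E:a0}, namely taking $\ord a_0$ both sufficiently negative and outside the achievable set of orders of $-(a_3x^3+a_1x)$.
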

    \begin{proof}
      As before, we bother about Nora alone. Wanda must pick a non-zero $a_3$ (or $a_0$) on her f{i}rst move. This is followed by Nora taking $a_2$ to be zero. Wanda's second choice should be that of $a_0$ and such that $a_0 / a_3 = d^3$ for some $d \in \K^*$. The monic polynomial
      \[
        x^3 + ( a_1 / a_3 )x + ( a_0 / a_3 ) = x^3 + ( a_1 / a_3 ) x + d^3
      \]
      transforms as
      \[
        ( x + d )^3 + ( a_1 / a_3 )x
      \]
      because $\operatorname{char} \K = 3$. Hence, Nora would want
      \[
        a_1 \neq -a_3 x^{-1} ( x + d )^3
      \]
      for all $x \in \K^*$. For $x$ such that $\ord x < \ord d$, the order of the right side expression is $\ord a_3 + 2\cdot\ord x$. For elements of $\K^*$ with $\ord x > \ord d$, we have its order to be $\ord a_0 - \ord x$. If $\ord x$ equals $\ord d$, the order of $a_3x^{-1}(x + d )^3$ should be at least $\ord a_3 + 2\cdot\ord d$ and be equivalent to $\ord a_3 - \ord d$ modulo $3$. Nora can choose $a_1$ to have order larger than the maximum of $\ord a_3 + 2\cdot\ord d$ and $\ord a_0 - \ord d$ with $\ord a_1 \not\equiv \ord a_3 - \ord d\pmod{3}$. This will translate into a victory for her.
    \end{proof}
    The residue f{i}eld $\O/\mathfrak{p}$ is allowed to have inf{i}nite cardinality over here. For $\K = \F_q ((T))$ where $q$ is some power of $3$, an element $\alpha$ is a cube in \K\ if{f} the non-zero coef{f}icients of $\alpha$ are those where the corresponding power of $T$ is an integer divisible by $3$. Otherwise said, $\alpha$ needs to belong to $\F_q (( T^3 ))$.

\section{\protect{Maximal abelian extension of \Q}}\label{S:cyc}
    A speci{f}ic quest{i}on was asked by~\citeauthor*{GWZ18} with regards to rat{i}onal polynomials. Because Lemma~\ref{L:GWZ}, it can be rephrased as to whether Nora can choose her coef{f}icients so that the result{i}ng polynomial $f \in \Q [x]$ has no roots in the compositum of all solvable extensions of \Q. As is well-known, this is impossible for $d < 5$. In part{i}cular, the Galois group is always abelian for quadrat{i}c polynomials being a subgroup of $\Z / 2\Z$. We next see that Nora can ensure the counterpart in the cubic case to be not so.
    \begin{lemma}\label{L:cnab}
      Let $d = 3$ and the players required to choose coef{f}icients to be rat{i}onal numbers. Then, Nora playing last can ensure that the polynomial obtained at the end does not have a root in any abelian extension of \Q.
    \end{lemma}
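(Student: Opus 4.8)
My plan is to translate the target condition into a statement about the Galois group of $f$, and then secure that property through a sign condition on the discriminant that Nora can impose on her final move.

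First I would record the guiding dichotomy. If $f$ is reducible over $\Q$ it has a rational root, which already lies in $\Q$ and hence in every abelian extension; so Nora must keep $f$ irreducible. For an irreducible cubic $f$ with a root $\alpha$, the field $\Q(\alpha)$ has degree $3$ over $\Q$. Were $\alpha$ to lie in some abelian extension $L / \Q$, then $\Q(\alpha) \subseteq L$ and, $\mathrm{Gal}(L / \Q)$ being abelian, every subgroup is normal, so $\Q(\alpha) / \Q$ would itself be Galois. A degree-$3$ Galois extension is cyclic, which for an irreducible cubic happens exactly when its discriminant $\Delta$ is a square in $\Q$. Consequently Nora wins the moment she produces an irreducible $f$ whose discriminant is not a perfect square, and it is enough for her to arrange $\Delta < 0$, since a negative rational is never a square.

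Next I would exhibit Nora's last move. Writing $f(x) = a_3 x^3 + a_2 x^2 + a_1 x + a_0$, its discriminant is
\[
  \Delta = 18 a_3 a_2 a_1 a_0 - 4 a_2^3 a_0 + a_2^2 a_1^2 - 4 a_3 a_1^3 - 27 a_3^2 a_0^2.
\]
Whichever coefficient $a_i$ is left for Nora to decide last, $\Delta$ is a polynomial in $a_i$ (the remaining coefficients being fixed) whose top-degree coefficient is $-27 a_3^2$, $-4 a_3$, $-4 a_0$ or $-27 a_0^2$ for $i = 0, 1, 2, 3$ respectively, each of which is nonzero because $a_0, a_3 \neq 0$. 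Hence the sign of $\Delta$ for large $|a_i|$ is dictated by this leading term, and Nora can drive $\Delta \to -\infty$ by taking $a_i$ of large absolute value and suitable sign. Thus an entire ray of admissible values already guarantees $\Delta < 0$, and it remains only to keep $f$ free of rational roots within that ray. If $a_i$ is a middle coefficient ($i \in \{1, 2\}$), then $a_0$ and $a_3$ are fixed and nonzero, so by the rational root theorem the candidate roots $s/t$ (with $s \mid a_0$, $t \mid a_3$) are finite in number; each excludes at most one value of $a_i$, so Nora takes $a_i$ large of the correct sign outside this finite set. If $a_i$ is a boundary coefficient, say $a_0$ (the case $a_3$ following by the reverse-polynomial trick used in Proposition~\ref{P:biquad}), she instead chooses $a_0 = \pm P$ for a large prime $P$: a rational root $s/t$ must then satisfy $s \mid P$ and $t \mid a_3$, and a dominant-term estimate shows that for $P$ large neither $\pm P/t$ nor $\pm 1/t$ can be a root, while simultaneously $\Delta < 0$. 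Either way $f$ is irreducible with $\Delta < 0$, hence has Galois group $S_3$ and no root in any abelian extension of $\Q$.

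The only genuinely delicate point is the boundary case: when Nora is forced to set $a_0$ (or $a_3$) last, naive avoidance fails because the set of constant terms producing a rational root is infinite, and it is precisely the large-prime choice that reconciles irreducibility with the required sign of $\Delta$ in a single move. Everything else is routine bookkeeping over the four possible positions of Nora's final coefficient.
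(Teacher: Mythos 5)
Your proof is correct, and it reaches the same endgame as the paper --- force $f$ irreducible over $\Q$ with negative (hence non-square) discriminant, so that the Galois group is $S_3$ and $\Q(\alpha)$ cannot sit inside an abelian extension --- but the mechanism you use for irreducibility is genuinely different. The paper gets irreducibility by importing its $p$-adic machinery (Corollary~\ref{C:cub}, which rests on Proposition~\ref{P:cub} and Newton polygons): Nora prescribes the $p$-adic order and leading residue of the last coefficient so that $f$ has no root even in $\Q_p$, and in the middle-coefficient branch she plays $a_2=0$ early so that the discriminant collapses to $-4B^3-27C^2$, made negative simply by $B>0$. You instead argue entirely over $\Q$: the leading term of $\Delta$ in the last free coefficient ($-27a_3^2$, $-4a_3$, $-4a_0$, $-27a_0^2$) lets Nora drive $\Delta<0$ along a ray, and irreducibility is secured by the rational root theorem (finitely many candidates when a middle coefficient is last) or by the large-prime constant term plus a dominant-term estimate (when $a_0$ or $a_3$ is last). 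Your route is more elementary and self-contained, handles all four positions of the final coefficient uniformly without any early strategic move by Nora, and avoids the $p$-adic input altogether; what the paper's route buys is reuse of results already established in \S\,\ref{S:ult} and the stronger conclusion that $f$ has no root in $\Q_p$ either. One small point to tighten: since the coefficients are rational rather than integral, the rational root theorem should be applied after clearing denominators, so Nora should restrict her last choice to $(1/D)\Z$ where $D$ is a common denominator of the already-fixed coefficients (this makes the candidate-root set finite and independent of her choice); likewise in the boundary case the candidates are $\pm d_1/t$ and $\pm d_1P/t$ with $d_1\mid D$ and $t\mid Da_3$, not just $\pm 1/t$ and $\pm P/t$, though the same bounded-versus-dominant dichotomy disposes of them.
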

    \begin{proof}
      It was already seen in~\S\S\,\ref{SSec:cub} that Nora can choose the last coef{f}icient to be such that the rat{i}onal polynomial is irreducible over \Q, with the help of some $p$-adic f{i}eld.\\[-0.2cm]
      
     If this last coef{f}icient is $a_0$, Nora uses Corollary~\ref{C:cub} to obtain the correct $p$-adic order and leading term in the $p$-adic expansion for the coef{f}icient $a_0/a_3$ in the monicized rat{i}onal polynomial
      \[
        x^3 + (a_2/a_3)x^2 + (a_1/a_3)x + (a_0/a_3).
      \]
      This gives her that the polynomial obtained will have no roots in $\Q_p$ and a fort{i}ori no roots in \Q\ implying that it is irreducible over rat{i}onal numbers. The discriminant of the monic polynomial $x^3 + Ax^2 + Bx + C$ is given by
      \[
        A^2B^2 - 4B^3 - 4A^3C -27C^2 + 18ABC.
      \]
      Once $A$ and $B$ have been f{i}xed, the expression can be made negat{i}ve by choosing a suf{f}iciently large rat{i}onal number $C$. Since the $p$-adic order of $a_0/a_3$ is dictated to use by Corollary~\ref{C:cub}, this can be achieved by appending a large power of $p$. This, in part{i}cular, means that the discriminant of our cubic polynomial will not be a square in \Q. The associated Galois group will then be the whole of the symmetric group $S_3$ by~\cite[\S\,14.6]{DF04}. Nora does the same for the reverse polynomial of $f$ when $a_3$ is the last coef{f}icient to be chosen by her.\\[-0.2cm]
      
     Therefore, Wanda will like to prevent the leading or the constant coef{f}icient to be chosen by Nora in the end. Wanda must begin with picking $a_3$ (or $a_0$) in order to do so. This is followed by Nora declaring $a_2 = 0$ and Wanda making a non-zero choice of $a_0$ (or $a_3$) thereaf{t}er. Consider the monicized rat{i}onal polynomial
      \[
        x^3 + (a_1/a_3)x + (a_0/a_3)
      \]
      as belonging to $\Q_{p}[x]$, where $a_1$ is yet to be determined by Nora. She learns the correct $p$-adic order of $a_1/a_3$ from Corollary~\ref{C:cub} so as to have the polynomial to be irreducible over $\Q_p$ (and \Q\ as well). The discriminant of this depressed cubic equat{i}on is given as
      \[
        - 4 (a_1/a_3)^3 - 27 (a_0/a_3)^2.
      \]
      If Nora had further chosen $a_1$ such that $a_1 / a_3$ is a posi{t}ive rat{i}onal number, this discriminant would be negat{i}ve and our polynomial cannot have roots in any abelian extension of \Q\ once again.
    \end{proof}
    For games where the degree of the polynomial has been f{i}xed to be greater than $8$, Nora f{i}rst f{i}nds an odd prime $p$ such that $( p - 1 )/2$ has all its prime factors to be larger than $d$. The Chinese remainder theorem helps her to have a number $N$ such that $N \equiv 3\pmod{4}$ and $N \equiv 2\pmod{p_i}$ for all odd primes $p_i \leq d$. The rest is done by applying Dirichlet's theorem on primes in arithmet{i}c progressions to the sequence $\{\,N + t \cdot 4p_2 \cdot \ldots \cdot p_{\pi (d)} \mid t \in \N\,\}$, where $p_i$ denotes the $i$-th prime and $\pi (d)$ denotes the number of primes less than or equal to $d$.\\[-0.1cm]
    
    Nora will then employ an enhanced strategy to avoid roots in $\Q_{p}^{\text{unr}} ( \sqrt{-p} ) = \Q_{p}^{\text{unr}} ( \sqrt{p} )$. The reason is that $\Q_{p}^{\text{unr}}$ contains the subf{i}eld
    \begin{equation}
      \overline{\Q}^{\text{ab} \setminus p} := \bigcup_{\protect{\substack{d\,>\,1,\\ p\,\nmid\,d}}} \Q ( \zeta_d )
    \end{equation}
    where $\zeta_d$ denotes a primi{t}ive $d$-th root of unity, while the maximal abelian extension of \Q\ is given by
    \begin{equation}\label{E:11}
      \overline{\Q}^{\text{ab}} = \bigcup_{r \in \N} \overline{\Q}^{\text{ab} \setminus p} ( \zeta_{p^r} ).
    \end{equation}
    The degree of each of the f{i}nite extensions in~\eqref{E:11} is $(p-1)p^{r - 1}$ over $\overline{\Q}^{\text{ab} \setminus p}$. Thus, any element in the maximal abelian extension is either contained in the quadrat{i}c extension $\overline{\Q}^{\text{ab} \setminus p} (\sqrt{p})$ or has degree $> d$ over $\overline{\Q}^{\text{ab} \setminus p}$ and, thereby, at least that much over the f{i}eld of rat{i}onal numbers too. It is then clear that any root of a degree $d$ rat{i}onal polynomial which belongs to $\overline{\Q}^{\text{ab}}$ can only be in $\overline{\Q}^{\text{ab} \setminus p} (\sqrt{p})$, if at all. The lat{t}er is contained in $\Q_{p}^{\text{unr}} (\sqrt{p)}$ and avoiding roots in this ultrametric f{i}eld will be enough for our polynomial to not have roots in any abelian extension of \Q. The reader may check that the $p$-adic valuat{i}on map takes values in the addi{t}ive group $(1/2)\Z$ on this quadrat{i}c extension.\\[-0.1cm]
    
    Nora will also require the following result about arithmet{i}c progressions in half-integers:
    \begin{lemma}\label{L:AP}
      Let $n_1, n_2 \in \Z,\ d > 8$ and $2 < i < d - 2$. Then, there exists a sequence of integers going to $-\infty$ which is disjoint from both sets $\{ n_1 + ni / 2 \mid n \in \Z \}$ and $\{ n_2 + n(d - i) / 2 \mid n \in \Z \}$.
    \end{lemma}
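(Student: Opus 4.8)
The plan is to reduce the statement about half-integer arithmetic progressions to an elementary covering argument for residue classes of genuine integers. The first thing I would observe is that only the integer members of the two sets can ever coincide with a target integer, and that these integer members form honest arithmetic progressions. Writing the first set as $\{ n_1 + ni/2 \mid n \in \Z \}$, if $i$ is even then every term is an integer and they comprise the residue class of $n_1$ modulo $e_1 := i/2$; if $i$ is odd, then $n_1 + ni/2 \in \Z$ exactly when $n$ is even, so the integer terms are $\{ n_1 + mi \mid m \in \Z \}$, a single residue class modulo $e_1 := i$. In either case the integers lying in the first set form one residue class modulo $e_1$, where $e_1$ equals $i/2$ or $i$. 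An identical analysis applies to the second set, producing a modulus $e_2$ equal to $(d-i)/2$ or $d-i$ according to the parity of $d - i$.

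The key arithmetic input is a lower bound on $e_1$ and $e_2$ combined with the hypothesis $d > 8$. Since $i > 2$ forces $i \geq 3$, one checks that $e_1 \geq 2$, with $e_1 = 2$ occurring only when $i = 4$; symmetrically $e_2 \geq 2$, with $e_2 = 2$ only when $d - i = 4$. The crucial point is that the simultaneous equalities $e_1 = e_2 = 2$ would require $i = 4$ and $d - i = 4$, hence $d = 8$, which is exactly what the assumption $d > 8$ forbids. Consequently at most one of $e_1, e_2$ equals $2$, and therefore $1/e_1 + 1/e_2 < 1$ (one estimates $\leq 1/2 + 1/3$ when one modulus is $2$, and $\leq 2/3$ when both exceed $2$).

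From here I would conclude by a counting argument over one common period $L := \operatorname{lcm}(e_1, e_2)$. The union of the two residue classes meets at most $L/e_1 + L/e_2 = L(1/e_1 + 1/e_2) < L$ of the $L$ residues in a full period, so at least one residue class modulo $L$ is disjoint from both integer progressions. That residue class is itself an infinite arithmetic progression of integers, and listing its members in decreasing order yields a sequence tending to $-\infty$ which meets neither of the original half-integer sets, as required.

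I anticipate the main obstacle to lie in the bookkeeping of the first step: one must correctly track how the parities of $i$ and $d - i$ alter the modulus seen by the integers, since a naive reading would assign the common difference $i/2$ (a half-integer when $i$ is odd) instead of the correct integer modulus. Once the two progressions have been recast as integer residue classes, the decisive use of $d > 8$ to rule out the covering pair $e_1 = e_2 = 2$ is the only genuinely delicate inequality, and everything after it is routine counting.
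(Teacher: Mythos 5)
Your argument is correct, and it takes a genuinely different route from the paper's. The paper proceeds by direct search: it fixes an arbitrary $N$ and, splitting into cases according to whether $\min\{i/2,\,(d-i)/2\}$ equals $3/2$, equals $2$, or is at least $5/2$, exhibits an explicit integer $N_0 < N$ within a bounded window below $N$ lying in neither set, nudging a first candidate by $1/2$, $1$, $5/2$ or $3$ whenever it happens to collide with the other progression. You instead extract the integer points of each half-integer progression --- a single residue class modulo $e_1 \in \{i/2,\, i\}$ and modulo $e_2 \in \{(d-i)/2,\, d-i\}$ respectively, according to the parities of $i$ and $d-i$ --- and run a counting argument modulo $\operatorname{lcm}(e_1, e_2)$, using $1/e_1 + 1/e_2 < 1$ to produce an entire uncovered residue class. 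Both proofs lean on $d > 8$ at the same pressure point, but your formulation (that $e_1 = e_2 = 2$ would force $i = d - i = 4$, hence $d = 8$) makes the role of that hypothesis especially transparent and pinpoints the unique configuration where the lemma would actually fail. Your version is more uniform and yields a full arithmetic progression of admissible integers in one stroke; the paper's is marginally more constructive in that it locates an admissible $N_0$ within an explicit bounded distance below any prescribed $N$. Either argument suffices for the application in the proposition that follows.
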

    \begin{proof}
      It suf{f}ices to show that for any $N \in \Z$, we have an integer $N_0 < N$ which is in neither of those two sets. The roles of $i$ and $d - i$ are interchangeable within this proof. Moreover, we have arranged that $\min \{ i/2, (d-i)/2 \} \geq 3/2$.\\[-0.2cm]
      
      When it achieves this minimum for some $i$ (say), then the other term should be at least $3$ as $d > 8$. Pick $m_1 \in \{ n_1 + 3n/2,\ n \in \Z \}$ to be less than $N$ and a non-integer. If $m_1$ also belongs to $\{ n_2 + n(d-i)/2,\ n \in \Z \}$, then $N_0 := m_1 - (1/2)$ does not belong to either of the two sets and we are done. If instead $m_1 - (1/2) \in \{ n_2 + n(d-i)/2,\ n \in \Z \}$, then $N_0$ is taken to be $m_1 - (5/2)$. Else, we may have $N_0$ to be $m_1 - (1/2)$ itself.\\[-0.2cm]
      
      When $\min \{ i/2, (d-i)/2 \} = 2$, we again have that $\max \{ i/2, (d-i)/2 \} \geq 5/2$. Pick some $m_1 < N$ from $\{ n_1 + 2n,\ n \in \Z \}$. If it is also in $\{ n_2 + n(d-i)/2,\ n \in \Z \}$, then take $N_0 = m_1 - 1$. Otherwise one of the integers $m_1 - 1$ or $m_1 - 3$ will do the job for us.\\[-0.2cm]
      
      For $d$ and $i$ such that $\min \{ i/2, (d-i)/2 \} \geq 5/2$, the arguments are only easier than the ones outlined so far.
    \end{proof}
    We are now ready to assert that
    \begin{prop}
      Let the degree of the polynomial be greater than $8$ and the players be required to choose rat{i}onal coef{f}icients. Then, Nora can ensure that the polynomial has no roots in $\overline{\Q}^{\text{ab}}$ when she is the last player.
    \end{prop}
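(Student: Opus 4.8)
The plan is to reduce the game over $\overline{\Q}^{\text{ab}}$ to a single last-mover game over the ultrametric field $\K=\Q_p^{\text{unr}}(\sqrt p)$, where $p$ is the prime singled out above. By the preceding discussion, every root in $\overline{\Q}^{\text{ab}}$ of a degree-$d$ rational polynomial already lies in $\overline{\Q}^{\text{ab}\setminus p}(\sqrt p)\subset\K$; hence it suffices that Nora steer the final polynomial away from having any root in $\K$, all the while playing rational coefficients of prescribed $p$-adic order. The one structural novelty compared with Theorem~\ref{Th:dgeq5} is that here $\ord$ takes values in $\tfrac12\Z$, whereas Nora's own coefficients, being rational, always have integer order.

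My first step is a counting argument to control which coefficient is left for the closing move. Since $d>8$ there are $d+1\ge 10$ coefficients, so Nora, playing last, has at least five moves and thus at least four non-final ones. On every non-final move she claims whichever of the four \emph{forbidden} coefficients $a_1,a_2,a_{d-2},a_{d-1}$ is still free; four such moves suffice to guarantee that all four are determined (by her or by Wanda) before the game ends. The single coefficient surviving for the last move therefore has index in $\{0,d\}$ or in $\{3,\dots,d-3\}$. Excluding the indices $1,2,d-2,d-1$ is essential: for $i=2$, and symmetrically for $i=d-2$, the term $a_0x^{-i}$ forces the competing orders to sweep out an arithmetic progression of common difference $i/2=1$, i.e.\ all of $\Z$, which an integer-order rational coefficient can never avoid.

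It remains to play the closing move. If $a_0$ survives, Nora wants $a_0\neq-x^d\big(a_d+\cdots+a_1x^{-(d-1)}\big)$ for all $x\in\K^*$; the orders on the right lie in $\{\ord a_d+dn/2 : n\in\Z\}$ for small $\ord x$ and are bounded below otherwise, and since $d/2\ge 9/2>1$ she may take a rational $a_0$ whose integer order dodges this progression and lies below the bound. A surviving $a_d$ reduces to this case through the reverse-polynomial substitution of Proposition~\ref{P:biquad}. If instead some $a_i$ with $3\le i\le d-3$ survives, I would carry over the hard case of Theorem~\ref{Th:dgeq5} verbatim, now with orders in $\tfrac12\Z$: the contributions from small, large, and intermediate $\ord x$ fall respectively into $\{\ord a_d+(d-i)n/2\}$, $\{\ord a_0-in/2\}$, and a region $\{n\ge M\}$ for a finite bound $M$. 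As $2<i<d-2$, Lemma~\ref{L:AP}, applied with $n_1=\ord a_0$ and $n_2=\ord a_d$, furnishes integers tending to $-\infty$ that miss both progressions; choosing one below $M$ as $\ord a_i$, and any rational number of that order, completes Nora's strategy.

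The crux, and the reason for the hypothesis $d>8$, is precisely the half-integer value group. Nora's two governing progressions have common differences $i/2$ and $(d-i)/2$, and she can win only when they fail to cover $\Z$; this is exactly the combinatorial content of Lemma~\ref{L:AP}. The argument genuinely degenerates at $d=8$, $i=4$, where both differences equal $2$ and Wanda can give $\ord a_0$ and $\ord a_d$ opposite parities, making the two progressions jointly exhaust $\Z$ so that no admissible order for $a_i$ survives. Forcing the troublesome indices $\{1,2,d-2,d-1\}$ out of the endgame by the counting step, and relying on Lemma~\ref{L:AP} for the remaining middle indices, is therefore the heart of the matter.
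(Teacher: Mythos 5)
Your proposal is correct and follows essentially the same route as the paper: reduce to avoiding roots in $\Q_p^{\text{unr}}(\sqrt{p})$ via the preceding discussion, use the four pre-final moves to ensure $a_1, a_2, a_{d-2}, a_{d-1}$ are settled early, handle a surviving $a_0$ or $a_d$ by the progression-plus-lower-bound argument (with the reverse-polynomial trick), and dispose of a surviving middle coefficient via Lemma~\ref{L:AP} applied to the two half-integer progressions. Your added remarks on why the indices $1,2,d-2,d-1$ must be excluded and why the argument degenerates at $d=8$, $i=4$ are accurate elaborations of points the paper leaves implicit.
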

    \begin{proof}
      As $d$ is greater than $8$, Nora gets at least four moves before her last one. She uses those to have $a_1,\ a_2,\ a_{d - 2}$ and $a_{d - 1}$ f{i}xed by either of the players before Nora's last move of the game. If this results in her choosing the constant coef{f}icient at the end, she will want an $a_0 \in \Q^*$ such that
      \begin{equation}\label{E:Qpunr}
      a_0 \neq - ( a_dx^d + \ldots + a_1x )\quad\text{for all}\quad x \in \Q_{p}^{\text{unr}} (\sqrt{p)} \setminus \{ 0 \}
      \end{equation}
      when $a_3, \ldots, a_1 \in \Q$ have been decided before and $a_3 \neq 0$. The odd prime $p$ is such that all prime factors of $( p - 1 ) /2$ are greater than $d$. For non-zero elements $x$ in $\Q_p^{\text{unr}} (\sqrt{p})$ such that $\ord_p{x} < M_1$ where
      \begin{equation}
        M_1 := \min \big\{ ( \ord_p a_i - \ord_p a_d ) / ( d - i ) \mid 0 < i < d,\ a_i \neq 0 \big\},
      \end{equation}
      the right-side expression in~\eqref{E:Qpunr} has its order lying in the set $\{ \ord a_3 + 3n \mid n \in (1/2)\Z,\ n < M_1 \}$. The order of that expression is bounded below by
      \[
        M_2 := \min \{ \ord_p a_i + iM_1 \mid i = 1, 2, 3 \}
      \]
      for all remaining $x$. Nora chooses a rat{i}onal number whose $p$-adic order belongs to $\Z \setminus \big(\,\{ \ord_p a_d + dn \mid n \in (1/2)\Z,\ n < M_1 \} \cup \{ n \geq M_2 \}\,\big)$. She does the same for the reverse polynomial if $a_d$ is to be picked last by her. By our observat{i}ons above, such a polynomial will not have roots in any abelian extension of \Q.\\[-0.2cm]
      
      Next, we consider the situat{i}on when Nora has to choose $a_i$ for some $2 < i < d - 2$ in the end. She desires
      \[
        a_i \neq -x^{-i} ( a_d x^d + \cdots + a_{i + 1}x^{i + 1} + a_{i - 1}x^{i - 1} + \cdots + a_0 ) \text{ for all } x \in \Q_p^{\text{unr}} (\sqrt{p})^*.
      \]
      Let $M_3 = \min\,\{\,( \ord a_j - \ord a_d ) / ( d - j ) \mid 0 \leq j < d,\ j \neq i,\ a_j \neq 0 \,\}$ and $M_4 = \max\,\{\,( \ord a_0 - \ord a_j ) / j  \mid 0 < j < d,\ j \neq i,\ a_j \neq 0\,\}$ so that Nora can avoid all $x$ with $\ord_p x \notin [ M_3, M_4 ]$ from being roots by not allowing $a_i$ to belong to either of the bi-inf{i}nite arithmet{i}c progressions
      \begin{equation}\label{E:AP}
        \{ \ord_p a_d + n( d - i ) / 2 \mid n \in \Z \} \text{ and } \{ \ord_p a_0 - ni/2 \mid n \in \Z \}.
      \end{equation}
      The elements $x \in \Q_p^{\text{unr}} (\sqrt{p})^* \setminus \{ 0 \}$ with $M_3 \leq \ord_p x \leq M_4$ will lead to expressions on the right side of~\eqref{E:hope} with a uniform lower bound on their $p$-adic order, as before. Lemma~\ref{L:AP} tells us that Nora can f{i}nd an integer $N_0$ less than this lower bound and not in either of the arithmet{i}c progressions in~\eqref{E:AP}. On choosing $a_i$ with $\ord_p a_i = N_0$, the polynomial obtained will have no roots in any abelian extension of \Q.
    \end{proof}
    
  \section*{Acknowledgments}  
    \noindent We thank Prof.~N.~Saradha for her careful suggest{i}ons which have helped in improving the presentat{i}on of this paper. Samuel Zbarsky went through a preliminary version of the manuscript thoroughly and kindly pointed out various mistakes. The f{i}rst author acknowledges the support of the OPERA award and the Research Ini{t}iat{i}on Grant of BITS Pilani. The second author thanks Drs.~Jinxin Xue, Min Wang and Qijun Yan for providing infrastructural support during the period of this work.
    
\bibliographystyle{plainnat}
\bibliography{references_arXiv}

\begin{thebibliography}{11}
\providecommand{\natexlab}[1]{#1}
\providecommand{\url}[1]{\texttt{#1}}
\expandafter\ifx\csname urlstyle\endcsname\relax
  \providecommand{\doi}[1]{doi: #1}\else
  \providecommand{\doi}{doi: \begingroup \urlstyle{rm}\Url}\fi

\bibitem[Atiyah and Macdonald(1969)]{AM69}
M.~F. Atiyah and I.~G. Macdonald.
\newblock \emph{Introduct{i}on to commutat{i}ve algebra}.
\newblock Addison-Wesley Publishing Co., Reading, Mass.-London-Don Mills, 1969.

\bibitem[Cantor and Gordon(2000)]{CD00}
David~G. Cantor and Daniel~M. Gordon.
\newblock Factoring polynomials over {$p$}-adic fields.
\newblock In \emph{Algorithmic number theory ({L}eiden, 2000)}, volume 1838 of
  \emph{Lecture Notes in Comput.~Sci.}, pages 185--208. Springer, Berlin, 2000.

\bibitem[Cassels(1986)]{Cas86}
J.~W.~S. Cassels.
\newblock \emph{Local f{i}elds}, volume~3 of \emph{London Mathemat{i}cal
  Society Student Texts}.
\newblock Cambridge University Press, Cambridge, 1986.

\bibitem[Chistov(1991)]{Chi91}
Alexandre~L. Chistov.
\newblock Ef{f}icient factoring polynomials over local f{i}elds and its
  applicat{i}ons.
\newblock In \emph{Proceedings of the {I}nternat{i}onal {C}ongress of
  {M}athemat{i}cians ({K}yoto, 1990)}, pages 1509--1519. Math.\ Soc.\ Japan,
  1991.

\bibitem[Cohen(1993)]{Coh93}
Henri Cohen.
\newblock \emph{A course in computat{i}onal algebraic number theory}, volume
  138 of \emph{Graduate Texts in Mathemat{i}cs}.
\newblock Springer-Verlag, Berlin, 1993.

\bibitem[Dickson(1958)]{Dic58}
Leonard~Eugene Dickson.
\newblock \emph{Linear groups: with an exposi{t}ion of the {G}alois f{i}eld
  theory}.
\newblock Dover Publicat{i}ons, Inc., New York, 1958.

\bibitem[Dummit and Foote(2004)]{DF04}
David~S. Dummit and Richard~M. Foote.
\newblock \emph{Abstract algebra}.
\newblock John Wiley \& Sons, Inc., Hoboken, NJ, third edition, 2004.

\bibitem[Gasarch et~al.(2018)Gasarch, Washington, and Zbarsky]{GWZ18}
William Gasarch, Lawrence~C. Washington, and Samuel Zbarsky.
\newblock The coef{f}icient-choosing game.
\newblock \emph{J.\ Comb.\ Number Theory}, 10\penalty0 (1):\penalty0 1--17,
  2018.

\bibitem[Gouv\^{e}a(1997)]{Gou97}
Fernando~Q. Gouv\^{e}a.
\newblock \emph{{$p$}-adic numbers}.
\newblock Universitext. Springer-Verlag, Berlin, second edition, 1997.

\bibitem[Kopp et~al.(2020)Kopp, Randall, Rojas, and Zhu]{KRRZ}
Leann Kopp, Natalie Randall, J.~Maurice Rojas, and Yuyu Zhu.
\newblock Randomized polynomial-t{i}me root count{i}ng in prime power rings.
\newblock \emph{Math.\ Comp.}, 89\penalty0 (321):\penalty0 373--385, 2020.

\bibitem[Mukhamedov and Saburov(2013)]{MS13}
Farrukh Mukhamedov and Mansoor Saburov.
\newblock On equat{i}on {$x^q=a$} over {$\mathbb{Q}_p$}.
\newblock \emph{J.~Number Theory}, 133\penalty0 (1):\penalty0 55--58, 2013.

\end{thebibliography}

\end{document}